\definecolor{crimson}{RGB}{153, 0, 51}
\newtheorem{theorem}{Theorem}[section]
\newtheorem{proposition}[theorem]{Proposition}
\newtheorem{lemma}[theorem]{Lemma}
\newtheorem{corollary}[theorem]{Corollary}
\theoremstyle{definition}
\newtheorem{definition}[theorem]{Definition}
\newtheorem{assumption}[theorem]{Assumption}
\theoremstyle{remark}
\newtheorem{remark}{Remark}
\def\B{\mathcal{B}}
\def\Dnorm{D^*_{\text{normalized}}}
\def\E{\mathcal{E}}
\def\G{\mathcal{G}}
\def\I{\boldsymbol{I}}
\def\tilL{\Tilde{L}}
\def\N{\mathbb{N}}
\def\O{\mathcal{O}}
\def\R{\mathbb{R}}
\def\Rnorm{R^*_{\text{normalized}}}
\def\S{\mathbb{S}}
\def\V{\mathcal{V}}
\def\vr{\mathcal{V}_\mathcal{R}}
\def\vb{\mathcal{V}_\mathcal{B}}
\def\W{\boldsymbol{W}}
\def\X{\mathcal{X}}
\def\Y{\mathcal{Y}}
\def\Z{\mathbb{Z}}
\def\c{\boldsymbol{c}}
\def\g{\boldsymbol{g}}
\def\rc{r_{\text{c}}}
\def\u{\boldsymbol{u}}
\def\v{\boldsymbol{v}}
\def\x{\boldsymbol{x}}
\def\xc{\boldsymbol{x}_{\text{c}}}
\def\y{\boldsymbol{y}}
\def\z{\boldsymbol{z}}
\def\tilalph{\Tilde{\alpha}}
\def\tilmu{\Tilde{\mu}}
\def\Phil{\boldsymbol{\Phi}^{(\ell)}}
\renewcommand{\ALG@name}{Algorithmic Framework}
\title{On the Geometric Convergence of Byzantine-Resilient Distributed Optimization Algorithms}
\date{} 
\author{ \href{https://orcid.org/0000-0002-0769-1399}{\includegraphics[scale=0.06]{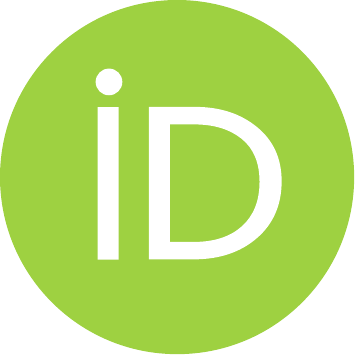}}\hspace{1mm}Kananart~Kuwaranancharoen \\
	School of Electrical and Computer Engineering\\
	Purdue University\\
	West Lafayette, IN 47907 \\
        \href{kkuwaran@alumni.purdue.edu}{\texttt{kkuwaran@alumni.purdue.edu}} \\
	%% examples of more authors
	\And
	\href{https://orcid.org/0000-0002-5390-2505}{\includegraphics[scale=0.06]{orcid.pdf}}\hspace{1mm}Shreyas~Sundaram \\
	School of Electrical and Computer Engineering\\
	Purdue University\\
	West Lafayette, IN 47907 \\
	\href{mailto:sundara2@purdue.edu}{\texttt{sundara2@purdue.edu}} \\
}
\begin{document}
\maketitle

\begin{abstract}
The problem of designing distributed optimization algorithms that are resilient to Byzantine adversaries has received significant attention. For the Byzantine-resilient distributed optimization problem, the goal is to (approximately) minimize the average of the local cost functions held by the regular (non adversarial) agents in the network. In this paper, we provide a general algorithmic framework for Byzantine-resilient distributed optimization which includes some state-of-the-art algorithms as special cases. We analyze the convergence of algorithms within the framework, and derive a geometric rate of convergence of all regular agents to a ball around the optimal solution (whose size we characterize). Furthermore, we show that approximate consensus can be achieved geometrically fast under some minimal conditions. Our analysis provides insights into the relationship among the convergence region, distance between regular agents' values, step-size, and properties of the agents' functions for Byzantine-resilient distributed optimization.
\end{abstract}

\keywords{consensus algorithm \and convex optimization \and distributed algorithms \and distributed optimization \and fault tolerant systems \and linear convergence \and multi-agent systems \and network security}

% SECTION: INTRODUCTION 
\section{Introduction}  
\label{sec: intro}

Distributed optimization problems pertain to a setting where each node in a network has a local cost function, and the goal is for all agents in the network to agree on a minimizer of the average of the local cost functions. In the distributed optimization literature, there are two main paradigms: client-server and peer-to-peer. Motivated by settings where the client-server paradigm may suffer from a single point of failure or communication bottleneck, there is a growing amount of work on the peer-to-peer setting where the agents in the network are required to send and receive information only from their neighbors. A variety of algorithms have been proposed to solve such problems in peer-to-peer architectures (e.g., see \cite{nedic2009distributed, duchi2011dual, shi2014linear, nedic2017achieving, pu2020push}). The works \cite{nedic2018network, yang2019survey, xin2020general} summarize the recent advances in the field of (peer-to-peer) distributed optimization. 

These aforementioned works typically make the assumption that all agents are trustworthy and cooperative (i.e., they follow the prescribed protocol). However, it has been shown that the regular agents fail to reach an optimal solution even if a single misbehaving (or ``Byzantine") agent is present \cite{sundaram2018distributed, su2015byzantine}. Thus, designing distributed optimization algorithms that allow all the regular agents' states in the network to stay close to the minimizer of the sum of regular agents' functions regardless of the adversaries' actions has become a prevailing problem. Nevertheless, compared to the client-server setting, there have been only a few works on Byzantine-resilient algorithms in the peer-to-peer setup.

{\bf Contributions.} In this work, we consider Byzantine-resilient (peer-to-peer) distributed deterministic optimization problems. Our contributions are as follows. 
\begin{enumerate}[label=(\roman*)]
    \item We introduce an algorithmic framework called R{\scriptsize{ED}}G{\scriptsize{RAF}}, a generalization of BRIDGE in \cite{fang2022bridge}, which includes some state-of-the-art Byzantine-resilient distributed optimization algorithms as special cases.
    
    \item We propose a novel contraction property that provides a general method for proving geometric convergence of algorithms in R{\scriptsize{ED}}G{\scriptsize{RAF}}. To the best of our knowledge, this is the first work to demonstrate a geometric rate of convergence of all regular agents' states to a ball containing the true minimizer for resilient algorithms under strong convexity. We also explicitly characterize both the convergence rate and the size of the convergence region.

    \item We introduce a novel mixing dynamics property used to derive approximate consensus results for algorithms in R{\scriptsize{ED}}G{\scriptsize{RAF}}, explicitly characterizing both the convergence rate and final consensus diameter.
    
    \item Using our framework, we analyze the contraction and mixing dynamics properties of some state-of-the-art algorithms, leading to convergence and consensus results for each algorithm. Our work is the first to show that these algorithms satisfy such properties.
    
    \item We demonstrate and compare the performance of the algorithms through numerical simulations to corroborate the theoretical results for convergence and approximate consensus.
\end{enumerate}
% SECTION: RELATED WORK
\section{Related Work}  
\label{sec: related work}

The survey paper \cite{yang2020adversary} provides an overview of some Byzan\-tine-resilient algorithms for both the client-server and peer-to-peer paradigms. Since we are focusing on resilient algorithms for peer-to-peer settings, we discuss the following research papers attempting to solve such problems. 
Papers \cite{su2015byzantine, sundaram2018distributed, su2020byzantine} show that using the \textit{distributed gradient descent (DGD)} equipped with a \textit{trimmed mean filter} guarantees convergence to the convex hull of the local minimizers under scalar-valued objective functions. Adopting a similar algorithm, \cite{fu2021resilient} gives the same guarantee for scalar-valued problems under \textit{deception attacks}. The work \cite{zhao2019resilient} also considers the scalar version of such problems but relies on \textit{trusted agents} which cannot be compromised by adversarial attacks. 
To tackle vector-valued problems, \cite{yang2019byrdie} proposes ByRDiE, a coordinate descent method for machine learning problems leveraging the algorithm in \cite{su2015byzantine}, while \cite{fang2022bridge} presents BRIDGE, an algorithm framework for Byzantine-resilient distributed optimization problems. Even though \cite{yang2019byrdie} and \cite{fang2022bridge} show the convergence to the minimizer with high probability (for certain specific algorithms), they require that the training data are independent and identically distributed (i.i.d.) across the agents in the network. While resilient algorithms with the trimmed mean filter are widely used, e.g., \cite{sundaram2018distributed, su2015byzantine, su2020byzantine, fu2021resilient, zhao2019resilient, fang2022bridge}, the convergence analysis for multivariate functions under general assumptions is still lacking.
The work \cite{ravi2019detection} proposes decentralized robust subgradient push, a resilient algorithm based on a subgradient push method \cite{nedic2014distributed} equipped with a \textit{maliciousness score} for detecting adversaries. However, the work requires that the regular agents' functions have common statistical characteristics, and does not provide any guarantees on the proposed algorithm. 
Papers \cite{kuwaran2020byzantine_ACC, kuwaran2024scalable} introduce SDMMFD and SDFD, resilient algorithms for deterministic distributed convex optimization problems with multi-dimensional functions. These algorithms have an asymptotic convergence guarantee to a proximity of the true minimum. However, they do not provide the convergence rate for the proposed algorithms. In contrast, the work \cite{gupta2021byzantine} offers an algorithm with provable \textit{exact fault-tolerance}, but it relies on redundancy among the local functions and requires the underlying communication network to be complete.

% {START} Distributed Stochastic Optimization Literature  
For distributed stochastic optimization problems, \cite{peng2021byzantine} introduces a resilient algorithm based on a total variation norm penalty motivated from \cite{ben2015robust}. The recent paper \cite{guo2021byzantine} also considers stochastic problems, and proposes an algorithm utilizing a distance-based filter and objective value-based filter, but does not provide any performance guarantees. The recent paper \cite{elkordy2022basil} which also considers stochastic problems especially for machine learning, proposes a validation-based algorithm for both i.i.d. and non-i.i.d. settings. In particular, the work theoretically shows a convergence guarantee for the proposed algorithm under convex loss functions and i.i.d. data. The recent papers \cite{wu2023byzantine} and \cite{he2023byzantinerobust} propose algorithms which converge to a neighborhood of a stationary point for distributed stochastic non-convex optimization problems.
% [END] Distributed Stochastic Optimization Literature  

As outlined in our contributions section, this paper addresses gaps in the existing literature by demonstrating the geometric rate of convergence of all regular agents' states to a ball containing the true minimizer for a class of resilient algorithms under the strong convexity assumption. We also explicitly characterize the size of this ball. Consequently, our work provides a convergence analysis under mild assumptions for four resilient algorithms: (i) algorithms employing the trimmed mean filter (referred to as CWTM), as studied in \cite{sundaram2018distributed, su2015byzantine, su2020byzantine, fu2021resilient, zhao2019resilient, fang2022bridge}; (ii) SDMMFD, as considered in \cite{kuwaran2020byzantine_ACC, kuwaran2024scalable}; (iii) SDFD, as introduced in \cite{kuwaran2024scalable}; and (iv) a resilient algorithm based on resilient vector consensus (referred to as RVO) \cite{park2017fault, abbas2022resilient}.
For detailed descriptions of each algorithm, please refer to subsection~\ref{subsec: existing algs}. In this section, we provide a comparative summary of assumptions and theoretical results between our work and previous studies, as outlined in Table~\ref{table: algs-comparison}\footnote{We use the terms ``geometric convergence" and ``linear convergence" interchangeably.}.

The table demonstrates that prior works on Byzantine-resilient distributed optimization mostly considered the decreasing step-size regime, typically under convex local functions, leading to sublinear convergence at best. In contrast, our work explores constant step-sizes under strongly convex local functions, which enables us to achieve a linear rate of convergence for both optimality distance and consensus distance. However, this comes at the expense of obtaining an approximate consensus, rather than an exact consensus.

\begin{table}[t]%[tbhp]
\caption{Comparison of Assumptions and Theoretical Results between Our Work and Previous Studies}
\label{table: algs-comparison}
\begin{threeparttable}
\resizebox{\textwidth}{!}{%
    \begin{tabular}{| c || c | c | c | c | c | c | c |} \hline
    \multirow{4}{*}{\bf \makecell{Assumptions \\ and Results}} 
     & \multicolumn{7}{c|}{\bf Byzantine-Resilient Distributed Optimization Algorithm} \\ \cline{2-8}
     & \makecell{CWTM \\ \cite{su2015byzantine, su2020byzantine}} & \makecell{CWTM \\ \cite{zhao2019resilient}} & \makecell{CWTM \\ \cite{sundaram2018distributed}} & \makecell{CWTM \\ \cite{fang2022bridge}} & \makecell{SDMMFD \\ \cite{kuwaran2020byzantine_ACC, kuwaran2024scalable}} & \makecell{SDFD \\ \cite{kuwaran2024scalable}} & 
    \makecell{CWTM, RVO, \\ SDMMFD, SDFD \\ (our work)} \\ \hline \hline
    Dimension\tnote{1} & single & single & single & multi & multi & multi & multi \\ \hdashline
    Convexity & convex & convex & convex & \makecell{strongly \\ convex} & convex & convex & \makecell{strongly \\ convex}  \\ \hdashline
    Gradient  & \makecell{bounded and \\ Lipschitz} & bounded & bounded & \makecell{bounded and \\ Lipschitz} & bounded & bounded & Lipschitz  \\ \hdashline
    Network & \makecell{nonempty source \\ component in \\ reduced graphs} & \makecell{connected \\ dominating \\ set (CDS)} & \makecell{robust \\ graph} & \makecell{nonempty source \\ component in \\ reduced graphs} & \makecell{robust \\ graph} & \makecell{robust \\ graph} & \makecell{robust \\ graph} \\ \hdashline
    Step-size & decreasing & decreasing & decreasing & decreasing & decreasing & decreasing & constant \\ \hdashline
    Addition & - & - & - & \makecell{i.i.d. training \\ data} & - & - & - \\ \hline
    Consensus & \makecell{exact, \\ asymptotic} & \makecell{exact, \\ asymptotic} & \makecell{exact, \\ sublinear rate} & \makecell{exact, \\ sublinear rate} & \makecell{exact, \\ asymptotic} & - & \makecell{approximate, \\ linear rate} \\ \hdashline
    Convergence & \makecell{neighborhood, \\ asymptotic} & \makecell{neighborhood, \\ asymptotic} & \makecell{neighborhood, \\ asymptotic} & \makecell{minimium, \\ sublinear} & \makecell{neighborhood, \\ asymptotic} & \makecell{neighborhood, \\ asymptotic} & \makecell{neighborhood, \\ linear} \\ \hline
    \end{tabular}}
    \begin{tablenotes}%\tiny
       \item [1] \resizebox{0.95\textwidth}{!}{This pertains to the suitability of each algorithm in relation to the number of independent variables within local functions.}
     \end{tablenotes}
\end{threeparttable}
\end{table}
% SECTION: BACKGROUND AND PROBLEM FORMULATION
\section{Background and Problem Formulation} 
\label{sec: notation and formulation}

\subsection{Background}
\label{subsec: notation}

Let $\N$, $\Z$ and $\R$ denote the set of natural numbers (including zero), integers, and real numbers, respectively. Let $\Z_{+}$, $\R_{\geq 0}$ and $\R_{> 0}$ denote the set of positive integers, non-negative real numbers, and positive real numbers, respectively. For convenience, for an integer $N \in \Z_{+}$, we define $[N] := \{ 1, 2, \ldots, N \}$. The cardinality of a set is denoted by $|\cdot|$.
Given positive integers $F \in \Z_+$ and $s \geq F$, and a set of scalars $\mathcal{X} = \{ x_1, x_2, \ldots, x_s \}$, define $M_F(\mathcal{X})$ and $m_F(\mathcal{X})$ to be the $F$-th largest element and $F$-th smallest element, respectively, of the set $\mathcal{X}$.

\subsubsection{Linear Algebra}
Vectors are taken to be column vectors, unless otherwise noted. We use $x^{(\ell)}$ to represent the $\ell$-th component of a vector $\x$. The Euclidean norm on $\R^d$ is denoted by $\| \cdot \|$. We use $\boldsymbol{1}$ and $\boldsymbol{I}$ to denote the vector of all ones and the identity matrix, respectively, with appropriate dimensions. We denote by $\langle \u, \v \rangle$ the Euclidean inner product of vectors $\u$ and $\v$, i.e., $\langle \u, \v \rangle = \u^T \v$ and by $\angle (\u, \v)$ the angle between vectors $\u$ and $\v$, i.e., $\angle (\u, \v) = \arccos \big( \frac{ \langle \u, \v \rangle }{\Vert \u \Vert \Vert \v \Vert}  \big)$. 
The Euclidean ball in $\R^d$ with center at $\x_0 \in \R^d$ and radius $r \in \R_{\geq 0}$ is denoted by $\mathcal{B}(\x_0, r) := \{ \x \in \R^d : \| \x - \x_0 \| \leq r \} $. For $N \in \Z_+$, a matrix $\W \in \R^{N \times N}$ is (row-)stochastic if $\W \boldsymbol{1} = \boldsymbol{1}$ and $w_{ij} \geq 0$ for all $i, j \in [N]$. For $N \in \Z_+$, we use $\S^N$ to denote the set of all $N \times N$ (row-)stochastic matrices.

\subsubsection{Functions Properties}
For a differentiable function $f: \R^d \to \R$ and a point $\x \in \R^d$, define the vector $\nabla f (\x) \in \R^d$ to be the gradient of $f$ at point $\x$.
\begin{definition}[strongly convex function]
Given a non-negative real number $\mu \in \R_{\geq 0}$ and differentiable function $f: \R^d \to \R$, $f$ is $\mu$-strongly convex if for all $\x_1$, $\x_2 \in \R^d$,
\begin{equation}
    f ( \x_1 ) 
    \geq f ( \x_2 ) + \langle \nabla f ( \x_2 ), \x_1 - \x_2 \rangle + \frac{\mu}{2} \| \x_1 - \x_2 \|^2.
    \label{def: strongly convex 1}
\end{equation} 
\end{definition} 
Note that a differentiable function is convex if it is $0$-strongly convex.

\begin{definition}[Lipschitz gradient]
Given a non-negative real number $L \in \R_{\geq 0}$ and differentiable function $f: \R^d \to \R$, $f$ has an $L$-Lipschitz gradient if for all $\x_1$, $\x_2 \in \R^d$,
\begin{equation}
    \| \nabla f( \x_1 ) - \nabla f( \x_2 ) \| \leq L \| \x_1 - \x_2 \|.
    \label{def: lipschitz gradient 0}
\end{equation}
\end{definition} 

\subsubsection{Graph Theory}
We denote a network by a directed graph $\mathcal G = ( \V, \E)$, which consists of the set of $N$ nodes  $\V = \{ v_{1}, v_{2}, \ldots, v_{N} \}$ and the set of edges $\E \subseteq \V \times \V$. If $(v_i, v_j) \in \E$, then node $v_j$ can receive information from node $v_i$. 
The in-neighbor and out-neighbor sets are denoted by $\mathcal{N}^{\text{in}}_{i} = \{ v_j \in \mathcal V: \; (v_{j}, v_{i}) \in \E \} $ and $\mathcal{N}^{\text{out}}_{i} = \{ v_j \in \mathcal V: \; (v_{i}, v_{j}) \in \E \} $, respectively. 
A path from node $v_{i}\in \mathcal V$ to node $v_{j}\in \mathcal V$  is a sequence of nodes $v_{k_1},v_{k_2}, \dots, v_{k_l}$ such that $v_{k_1}=v_i$, $v_{k_l}=v_j$ and $(v_{k_r},v_{k_{r+1}}) \in \E$ for $r \in [l-1]$. Throughout the paper, the terms nodes and agents will be used interchangeably. Given a set of vectors $\{\x_1, \x_2, \ldots, \x_N\}$, where each $\x_i \in \R^d$, we use the following shorthand notation for all $\mathcal{S} \subseteq \V$:
$\{ \x_i \}_{\mathcal{S}} = \{ \x_i \in \R^d : v_i \in \mathcal{S} \}$.

\begin{definition}
A graph $\mathcal G = ( \V, \E)$ is said to be rooted at $v_i \in \V$ if for all $v_j \in \V \setminus \{v_i\}$, there is a path from $v_i$ to $v_j$. A graph is said to be rooted if it is rooted at some $v_{i}\in \mathcal V$.
\end{definition}

We will rely on the following definitions from \cite{leblanc2013resilient}. 

\begin{definition}[$r$-reachable set]
For a given graph $\G$ and a positive integer $r \in \Z_+$, a subset of nodes $\mathcal S  \subseteq \mathcal V$ is said to be $r$-reachable if there exists a node $v_i \in \mathcal S$  such that $|\mathcal N^{\textup{in}}_{i} \setminus \mathcal S| \geq r$.
\end{definition}

\begin{definition}[$r$-robust graphs] 
For a positive integer $r \in \Z_+$, a graph $\mathcal G$ is said to be $r$-robust if for all pairs of disjoint nonempty subsets $\mathcal{S}_1, \mathcal{S}_2 \subset \mathcal V$, at least one of $\mathcal{S}_1$ or $\mathcal{S}_2$ is $r$-reachable.
\end{definition}

The above definitions capture the idea that sets of nodes should contain individual nodes that have a sufficient number of neighbors outside that set. This will be important for the {\it local} decisions made by each node in resilient distributed algorithms, and will allow information from the rest of the network to penetrate into different sets of nodes. 

Next, following \cite{cao2008reaching}, we define the composition of two graphs and conditions on a sequence of graphs which will be useful for stating a mild condition for achieving approximate consensus guarantees later as follows.
\begin{definition}[composition]
The composition of a directed graph $\G_1 = (\V, \E_1)$ with a directed graph $\G_2 = (\V, \E_2)$ written as $\G_2 \circ \G_1$, is the directed graph $(\V, \E)$ with $(v_i, v_j) \in \E$ if there is a $v_k \in \V$ such that $(v_i, v_k) \in \E_1$ and $(v_k, v_j) \in \E_2$. 
\end{definition}

\begin{definition}[jointly rooted]
A finite sequence of directed graphs $\{ \G_k \}_{k \in [K]}$ is jointly rooted if the composition $\G_K \circ \G_{K-1} \circ \cdots \circ \G_{1}$ is rooted.
\end{definition}
\begin{definition}[repeatedly jointly rooted]
An infinite sequence of graphs \\ $\{ \G_k \}_{k \in \Z_+}$ is repeatedly jointly rooted if there is a positive integer $q \in \Z_+$ for which each finite sequence $\G_{q(k-1)+1}, \ldots, \G_{qk}$ is jointly rooted for all $k \in \Z_{+}$.
\end{definition} 

For a stochastic matrix $S \in \S^N$, let $\mathbb{G}(S)$ denote the graph $\G$ whose adjacency matrix is the transpose of the matrix obtained by replacing all of $S$'s nonzero entries with $1$'s.

\subsubsection{Adversarial Behavior}
\begin{definition}
A node $v_i \in \mathcal V$ is said to be Byzantine if during each iteration of the prescribed algorithm, it is capable of sending arbitrary values to different neighbors. It is also allowed to update its local information arbitrarily at each iteration of any prescribed algorithm.
\end{definition}

The set of Byzantine agents is denoted by $\vb \subset \mathcal V$. The set of regular agents is denoted by  $\vr = \V \setminus \vb$. The identities of the Byzantine agents are unknown to the regular agents in advance. Furthermore, we allow the Byzantine agents to know the entire topology of the network, functions equipped by the regular nodes, and the deployed algorithm. In addition, Byzantine agents are allowed to coordinate with other Byzantine agents and access the current and previous information contained by the nodes in the network (e.g., current and previous states of all nodes). Such extreme behavior is typical in the field of distributed computing \cite{lynch1996distributed} and in adversarial distributed optimization \cite{su2015byzantine, sundaram2018distributed, yin2018byzantine, yang2019byrdie, farhadkhani2022byzantine}. In exchange for allowing such extreme behavior, we will consider a limitation on the number of such adversaries in the neighborhood of each regular node, as follows. 

\begin{definition}[$F$-local model]
For a positive integer $F \in \Z_+$, we say that the set of adversaries $\vb$ is an $F$-local set if $|\mathcal{N}^{\textup{in}}_{i} \cap \vb | \leq F$ for all $v_{i} \in \vr$.
\end{definition}

Thus, the $F$-local model captures the idea that each regular node has at most $F$ Byzantine in-neighbors.

\subsection{Problem Formulation} 
\label{subsec: formulation}

Consider a group of $N$ agents $\V$ interconnected over a graph $\G = (\V, \E)$. Each agent $v_i \in \V$ has a local cost function $f_i: \R^d \rightarrow \R$. Since Byzantine nodes are allowed to send arbitrary values to their neighbors at each iteration of any algorithm, it is not possible to minimize the quantity $\frac{1}{N}\sum _{v_i \in \V} f_{i}(\x)$ that is typically sought in distributed optimization (since one is not guaranteed to infer any information about the true functions of the Byzantine agents) \cite{sundaram2018distributed, su2015byzantine}. Thus, we restrict the summation only to the regular agents' functions, i.e., the objective is to solve the following optimization problem,
\begin{equation}
    \min_{\x \in \R^d} \; f( \x ), 
    \quad \text{where} \quad
    f( \x ) 
    := \frac{1}{| \vr |} \sum_{v_i \in \vr} f_{i}(\x),
    \label{prob: regular node} 
\end{equation}
where $\vr$ represents the set of regular agents, and $f_i(\x)$ denotes the objective function associated with agent $v_i$.

A key challenge in solving the above problem is that no regular agent is aware of the identities or actions of the Byzantine agents. In particular, solving \eqref{prob: regular node} exactly is not possible under Byzantine behavior, since the identities and local functions of the Byzantine nodes are not known (the Byzantine agents can simply change their local functions and pretend to be a regular agent in the algorithms and these can never be detected). Therefore, one must settle for computing an approximate solution to \eqref{prob: regular node} (see \cite{sundaram2018distributed, su2015byzantine} for a more detailed discussion of this fundamental limitation).

Establishing the convergence (especially obtaining the rate of convergence) for resilient distributed optimization algorithms under general assumptions on the local functions (i.e., not assuming i.i.d. or redundancy) is non-trivial as evidenced by the lack of such results in the literature. We close this gap by introducing a \textit{proper} intermediate step which is showing the states contraction property (Definition~\ref{def: contraction}) before proceeding to show the convergence (Proposition~\ref{prop: convergence} and Theorem~\ref{thm: main-convergence}). Importantly, the contraction property not only captures some of state-of-the-art resilient distributed optimization algorithms in the literature (Theorem~\ref{thm: contraction}) but also facilitates the (geometric) convergence analysis.
% SECTION: ALGORITHMIC FRAMEWORK
\section{Resilient Distributed Optimization Algorithms} 
\label{sec: algorithm}

\subsection{Our Framework}

\begin{algorithm}[tb]
\caption{\underline{Re}silient \underline{D}istributed \underline{Gr}adient-Descent \underline{A}lgorithmic \underline{F}ramework (R{\scriptsize{ED}}G{\scriptsize{RAF}})}
\label{alg: algorithm framework}
\hbox{{\bfseries Input:} Network $\G$, functions $\{f_i\}_{\vr}$, parameter $F$}
\begin{algorithmic}[1]
\State \textbf{Step I:} Initialization
\Statex Each $v_i \in \vr$ sets 
$\z_i [0] \gets$ \texttt{init}($f_i$)
\For{$k = 0, 1, 2, 3, \ldots$}  
\For{$v_i \in \vr$}   %\COMMENT{Implement in parallel} 
\State \textbf{Step II:} Broadcast and Receive
\Statex \qquad \quad $v_i$ broadcasts $\z_i[k]$ to $\mathcal{N}_i^\text{out}$ and receives $\z_j[k]$ from $v_j \in \mathcal{N}_i^\text{in}$. 
Let $\mathcal{Z}_i [k] = \big\{ \z_j[k] : v_j \in \mathcal{N}_i^\text{in} \cup \{ v_i \} \big\}$
\State \textbf{Step III:} Filtering Step
\Statex \qquad \quad $\Tilde{\z}_i[k] \gets$ \texttt{filt}($\mathcal{Z}_i [k], F$)  
\Comment{Note: $\Tilde{\z}_i[k] = \big[ \Tilde{\x}^T_i[k], \Tilde{\y}^T_i[k] \big]^T$}
\State \textbf{Step IV:} Gradient Update
\begin{align}
\begin{split}
    \x_i [k+1] &= \Tilde{\x}_i [k] - \alpha_k \nabla f_i (\Tilde{\x}_i [k]), \\
    \y_i [k+1] &= \Tilde{\y}_i [k] 
    \quad \text{(if exists),}
    \label{eqn: gradient step}
\end{split}    
\end{align}
\Statex \qquad \quad where $\alpha_k \in \R_{> 0}$ is the step-size
\EndFor
\State $\z_i [k+1] = \big[ \x^T_i [k+1], \y^T_i [k+1] \big]^T$ for $v_i \in \vr$
\EndFor
\end{algorithmic}
\end{algorithm}

In this subsection, we introduce a class of resilient distributed optimization algorithms represented by the \textit{\textbf{re}silient \textbf{d}istributed \textbf{gr}adient-descent \textbf{a}lgorithmic \textbf{f}ramework (R{\scriptsize{ED}}G{\scriptsize{RAF}})} shown in Algorithmic~Framework~\ref{alg: algorithm framework}. 
At each time-step $k \in \N$, each regular agent\footnote{Byzantine agents do not necessarily need to follow the above algorithm, and can update their states, however they wish.} $v_i \in \vr$ maintains and updates a state vector $\x_i[k] \in \R^d$, which is its estimate of the solution to problem~\eqref{prob: regular node}, and optionally an auxiliary vector $\y_i[k] \in \R^{d'}$ where the dimension $d' \in \N$ depends on the specific algorithm. In our algorithmic framework, we let $\z_i[k] = \big[ \x^T_i[k], \y^T_i[k] \big]^T \in \R^{d + d'}$ and, similarly, $\Tilde{\z}_i[k] = \big[ \Tilde{\x}^T_i[k], \Tilde{\y}^T_i[k] \big]^T \in \R^{d + d'}$. In fact, R{\scriptsize{ED}}G{\scriptsize{RAF}} is a generalization of BRIDGE proposed in \cite{fang2022bridge} in the sense that our framework allows the state vector $\z_i [k]$ to include the auxiliary vector $\y_i [k]$.
In Algorithmic~Framework~\ref{alg: algorithm framework}, the operation \texttt{init}($f_i$) initializes $\z_i[0] = \big[ \x^T_i[0], \y^T_i[0] \big]^T$, and the operation \texttt{filt}($\mathcal{Z}_i [k], F$) performs a filtering procedure (to remove potentially adversarial states received from neighbors) and returns a vector $\Tilde{\z}_i[k]$. These functions will vary across algorithms, and will be discussed for specific algorithms later.

\subsection{Definition of Some Standard Operations for Resilient Distributed Optimization}
\label{subsec: methods}

To show that our framework (R{\scriptsize{ED}}G{\scriptsize{RAF}}) captures several existing resilient distributed optimization algorithms as special cases, we first define some operations that are used by existing algorithms. Throughout,
let $\V_i[k] \subseteq \mathcal{N}^{\text{in}}_i \cup \{ v_i \}$, $\X_i[k] = \{ \x_j [k] \}_{\mathcal{N}_i^{\text{in}} \cup \{ v_i \}}$ and $\Y_i[k] = \{ \y_j [k] \}_{\mathcal{N}_i^{\text{in}} \cup \{ v_i \}}$.

\begin{itemize}
    \item $\Tilde{\V}_i[k] \gets$ \texttt{dist\_filt}($\V_i[k]$, $\mathcal{Z}_i[k]$, $F$): \\
    Regular agent $v_i \in \vr$ removes $F$ states that are far away from $\y_i[k]$.
    More specifically, an agent $v_j \in \V_i[k]$ is in $\Tilde{\V}_i[k]$ if and only if 
    \begin{equation*}
        \| \x_j [k] - \y_i [k] \| 
        \leq \max \{ q_M, \| \x_i [k] - \y_i [k] \| \},
    \end{equation*}
    where $q_M = M_F \big( \{ \| \x_s [k] - \y_i [k] \| \}_{v_s \in \V_i[k]} \big)$.
    
    \item $\Tilde{\V}_i[k] \gets$ \texttt{full\_mm\_filt}($\V_i[k]$, $\X_i[k]$, $F$): \\
    Regular agent $v_i \in \vr$ removes states that have extreme values in any of their components.
    For a given $k \in \mathbb{N}$ and $\ell \in [d]$, let 
    $q_m^{(\ell)} = m_F \big( \{ x^{(\ell)}_s [k] \}_{\V_i[k]} \big)$ and
    $q_M^{(\ell)} = M_F \big( \{ x^{(\ell)}_s [k] \}_{\V_i[k]} \big)$.
    An agent $v_j \in \V_i[k]$ is in $\Tilde{\V}_i[k]$ if and only if for all $\ell \in [d]$,
    \begin{equation*}
        \min \{ q_m^{(\ell)}, x^{(\ell)}_i [k] \} 
        \leq x^{(\ell)}_j [k] \leq
        \max \{ q_M^{(\ell)}, x^{(\ell)}_i [k] \}.
    \end{equation*}
    
    \item $\{ \Tilde{\V}^{(\ell)}_i[k] \}_{\ell \in [d]} \gets$ \texttt{cw\_mm\_filt}($\V_i[k]$, $\X_i[k]$, $F$): \\
    For each dimension $\ell \in [d]$, regular agent $v_i \in \vr$ removes the $F$ highest and $F$ lowest values of the states of agents in $\V_i[k]$ along that dimension.
    More specifically, for a given $k \in \mathbb{N}$ and $\ell \in [d]$, let 
    $q_m^{(\ell)} = m_F \big( \{ x^{(\ell)}_s [k] \}_{\V_i[k]} \big)$ and
    $q_M^{(\ell)} = M_F \big( \{ x^{(\ell)}_s [k] \}_{\V_i[k]} \big)$.
    An agent $v_j \in \V_i[k]$ is in $\Tilde{\V}^{(\ell)}_i[k]$ if and only if 
    \begin{equation*}
        \min \{ q_m^{(\ell)}, x^{(\ell)}_i [k] \} 
        \leq x^{(\ell)}_j [k] \leq
        \max \{ q_M^{(\ell)}, x^{(\ell)}_i [k] \}.
    \end{equation*}
    
    \item $\Tilde{\x}_i[k] \gets$ \texttt{full\_average}($\V_i[k]$, $\X_i[k]$): \\
    Regular agent $v_i \in \vr$ computes
    \begin{equation}
        \Tilde{\x}_i [k] 
        = \sum_{v_j \in \V_i[k] } w_{ij} [k] \; \x_j [k],  
        \label{eqn: weight average}
    \end{equation}
    where $\sum_{v_j \in \V_i[k]} w_{ij} [k] = 1$ and $w_{ij} [k]  \in \R_{>0}$ for all $v_j \in \V_i[k]$. 
    
    \item $\Tilde{\x}_i[k] \gets$ \texttt{cw\_average}($\{ \V^{(\ell)}_i[k] \}_{\ell \in [d]}$, $\X_i[k]$): \\
    For each dimension $\ell \in [d]$, regular agent $v_i \in \vr$ computes
    \begin{equation}
        \Tilde{x}^{(\ell)}_i [k] 
        = \sum_{v_j \in \V^{(\ell)}_i[k] } 
        w^{(\ell)}_{ij} [k] \; x^{(\ell)}_j [k],  
        \label{eqn: CW weight average}
    \end{equation}
    where $w^{(\ell)}_{ij} [k] \in \R_{>0}$ for all $v_j \in \V^{(\ell)}_i[k]$ and $\sum_{v_j \in \V^{(\ell)}_i[k]} w^{(\ell)}_{ij} [k] = 1$. 
    
    \item $\Tilde{\x}_i[k] \gets$ \texttt{safe\_point}($\V_i[k]$, $\X_i[k]$, $F$): \\
    Regular agent $v_i \in \vr$ returns a state $\Tilde{\x}_i [k]$ which can be written as
    \begin{equation}
        \Tilde{\x}_i [k] 
        = \sum_{v_j \in \V_i[k] \cap \vr} 
        w_{ij} [k] \; \x_j [k],  
        \label{eqn: convex comb}
    \end{equation}
    where $w_{ij} [k] \in \R_{>0}$ for all $v_j \in \V_i[k] \cap \vr$ and $\sum_{v_j \in \V_i[k] \cap \vr} w_{ij} [k] = 1$. The works \cite{park2017fault, abbas2022resilient} discuss methods used to compute $\Tilde{\x}_i [k]$.
\end{itemize}

\subsection{Mapping Existing Algorithms into R{\scriptsize{ED}}G{\scriptsize{RAF}}}
\label{subsec: existing algs}

Using the operations defined above, we now discuss some algorithms in the literature that fall into our algorithmic framework.

\textit{Simultaneous distance-minmax filtering dynamics (SDMMFD)} \cite{kuwaran2020byzantine_ACC, kuwaran2024scalable} and \textit{simultaneous distance filtering dynamics (SDFD)} \cite{kuwaran2024scalable}: these two algorithms are captured in our framework by defining $\z_i[k] = \big[ \x^T_i[k], \y^T_i[k] \big]^T$ where $\y_i[k] \in \R^d$. 
In the initialization step $\z_i [0] \gets$ \texttt{init}($f_i$) (\textbf{line~1}) of both algorithms, each regular agent $v_i \in \vr$ computes an approximate minimizer $\hat{\x}_i^* \in \R^d$ of its local function $f_i$ (using any appropriate optimization algorithm)
and then sets $\x_i[0] \in \R^d$ arbitrarily and $\y_i[0] = \hat{\x}_i^*$.
In the filtering step $\Tilde{\z}_i[k] \gets$ \texttt{filt}($\mathcal{Z}_i [k], F$) (\textbf{line~5}), SDMMFD executes the following sequence of methods:
\begin{enumerate}
    \item $\V^{\text{dist}}_i[k] \gets \texttt{dist\_filt}(\mathcal{N}^{\text{in}}_i \cup \{ v_i \}, \mathcal{Z}_i[k], F)$,
    \item $\V^{\text{x,mm}}_i[k] \gets \texttt{full\_mm\_filt}(\V^{\text{dist}}_i[k], \X_i[k], F)$,
    \item $\Tilde{\x}_i[k] \gets \texttt{full\_average}(\V^{\text{x,mm}}_i[k], \X_i[k])$,
    \item $\{ \Tilde{\V}^{(\ell)}_i[k] \}_{\ell \in [d]} \gets \texttt{cw\_mm\_filt}(\mathcal{N}^{\text{in}}_i \cup \{ v_i \}, \Y_i[k], F)$,
    \item $\Tilde{\y}_i[k] \gets \texttt{cw\_average}(\{ \Tilde{\V}^{(\ell)}_i[k] \}_{\ell \in [d]}, \Y_i[k])$.
\end{enumerate}
The first three steps compute the intermediate main state $\Tilde{\x}_i[k]$ while the last two steps compute the intermediate auxiliary vector $\Tilde{\y}_i[k]$.
On the other hand, SDFD executes the same sequence of methods except that step (ii) is removed and $\V^{\text{x,mm}}_i[k]$ in step (iii) is replaced by $\V^{\text{dist}}_i[k]$.
Then, for both algorithms, we set $\Tilde{\z}_i [k] = [ \Tilde{\x}^T_i [k], \Tilde{\y}^T_i [k] ]^T$.

\textit{Coordinate-wise trimmed mean (CWTM)} \cite{sundaram2018distributed, su2015byzantine, su2020byzantine, fu2021resilient, zhao2019resilient, fang2022bridge} and \textit{resilient vector optimization (RVO)} based on \textit{resilient vector consensus} \cite{park2017fault, abbas2022resilient}: these algorithms are captured by setting $\z_i[k] = \x_i[k]$  (i.e., $\y_i [k] = \emptyset$).
In the initialization step $\z_i [0] \gets$ \texttt{init}($f_i$) (\textbf{line~1}) of both algorithms, the regular agents $v_i \in \vr$ arbitrarily initialize $\x_i[0] \in \R^d$. 
In the filtering step $\Tilde{\z}_i[k] \gets$ \texttt{filt}($\mathcal{Z}_i [k], F$) (\textbf{line~5}), CWTM executes the following sequence of methods:
\begin{enumerate}
    \item $\{ \Tilde{\V}^{(\ell)}_i[k] \}_{\ell \in [d]} \gets
    \texttt{cw\_mm\_filt}(\mathcal{N}_i^{\text{in}} \cup \{ v_i \}, \X_i[k], F)$,
    \item $\Tilde{\x}_i[k] \gets \texttt{cw\_average}(\{ \V^{(\ell)}_i[k] \}_{\ell \in [d]}, \X_i[k])$,
\end{enumerate}
whereas RVO executes 
\begin{enumerate}
    \item $\Tilde{\x}_i[k] \gets \texttt{safe\_point}(\mathcal{N}_i^{\text{in}} \cup \{ v_i \}, \X_i[k], F)$.
\end{enumerate}

In fact, the algorithms proposed in \cite{ben2015robust, peng2021byzantine, guo2021byzantine, elkordy2022basil} also fall into our framework. However, in this work, we focus on analyzing the four algorithms above since they share some common property (stated formally in Theorem~\ref{thm: contraction}), and we will provide a discussion on the algorithms in the works \cite{ben2015robust, peng2021byzantine, guo2021byzantine, elkordy2022basil} in Remark~\ref{rem: another-algorithms}.
% SECTION: MAIN RESULTS
\section{Assumptions and Main Results} 
\label{sec: assumption result}

We now turn to stating assumptions and definitions in subsection~\ref{subsec: asm} which will be used to prove convergence properties in subsection~\ref{subsec: convergence} and consensus properties in subsection~\ref{subsec: consensus}. Finally, in subsection~\ref{subsec: algorithms}, we analyze certain properties of each algorithm mentioned in the previous section. 

\subsection{Assumptions and Definitions}  
\label{subsec: asm}

\begin{assumption} \label{asm: convex}
For all $v_i \in \V$, given positive numbers $\mu_i \in \R_{> 0}$ and $L_i \in \R_{> 0}$, the functions $f_i$ are $\mu_i$-strongly convex and differentiable. Furthermore, the functions $f_i$ have $L_i$-Lipschitz continuous gradients.
\end{assumption}

The strongly convex and Lipschitz continuous gradient assumptions given \; above are common in the distributed convex optimization literature \cite{yuan2016convergence, nedic2017achieving, yang2019survey, hendrikx2019accelerated, kovalev2021linearly}. We define $\tilL := \max_{v_i \in \vr} L_i$ and $\tilmu := \min_{v_i \in \vr} \mu_i$.
Since $\{ f_i \}_{v_i \in \vr}$ are strongly convex functions, let $\x_i^* \in \R^d$ be the minimizer of the function $f_i$, i.e., $f_i (\x_i^*) = \min_{\x \in \R^d} f_i (\x)$. Moreover, let $\c^* \in \R^d$ and $r^* \in \R_{\geq 0}$ be such that $\x_i^* \in \B(\c^*, r^*)$ for all $v_i \in \vr$. Both quantities $\c^*$ and $r^*$ exist due to the existence of the set $\{ \x_i^* \}_{\vr} \subset \R^d$.
Let $\x^* \in \R^d$ be the minimizer of the function $f (\x)$, i.e., the solution of problem~\eqref{prob: regular node}. In other words, $f (\x^*) = \min_{\x \in \R^d} f (\x)$. For convenience, we also denote
$f^* := f(\x^*)$ and $\g_i [k] := \nabla f_i (\Tilde{\x}_i [k])$.

\begin{assumption} \label{asm: robust}
Given a positive integer $F \in \mathbb{Z}_+$, the Byzantine agents form an $F$-local set.
\end{assumption}

\begin{assumption} \label{asm: weight matrices}
There exists a positive number $\omega \in \R_{> 0}$ such that for all $k \in \mathbb{N}$ and $\ell \in [d]$, the non-zero weights $w_{ij} [k]$ in \texttt{full\_average} and \texttt{safe\_point}, and $w_{ij}^{(\ell)} [k]$ in \texttt{cw\_average} (all defined in subsection~\ref{subsec: methods}) are lower bounded by $\omega$.
\end{assumption}

Now, we introduce certain properties related to algorithms within our algorithmic framework (Algorithmic~Framework~\ref{alg: algorithm framework}). These definitions are crucial for proving the convergence results in subsection~\ref{subsec: convergence} and the consensus results in subsection~\ref{subsec: consensus}. Specifically, the following definition captures a certain behavior of the aggregation and filtering mechanism in \textbf{Step III} of the framework.

\begin{definition} \label{def: contraction}
For a vector $\xc \in \R^d$, constant $\gamma \in \R_{\geq 0}$, and sequence $\{ c[k] \}_{k \in \N}$ $\subset \R$, a resilient distributed optimization algorithm $A$ in R{\scriptsize{ED}}G{\scriptsize{RAF}} is said to satisfy the $( \xc, \gamma, \{ c[k] \} )$-\textit{states contraction} property if it holds that $\lim_{k \to \infty} c[k] = 0$ and for all $k \in \mathbb{N}$ and $v_i \in \vr$,
\begin{equation}
    \| \Tilde{\x}_i [k] - \xc \| \leq \sqrt{\gamma} \max_{v_j \in \vr} \| \x_j [k] - \xc \| + c[k].  
    \label{eqn: contraction def} 
\end{equation} 
\end{definition} 
In the above definition, the vector $\xc$ is called the \textit{contraction center} and the constant $\gamma$ is called the \textit{contraction factor}. In general, we want the contraction factor $\gamma$ to be small so that the intermediate state $\Tilde{\x}_i [k]$ remains close to the center $\xc$. The sequence $\{ c[k] \}_{k \in \N}$ captures a perturbation to the contraction term in each time-step. For a given vector $\xc \in \R^d$, we define
\begin{equation}
    \rc := \max_{v_i \in \vr} \| \xc - \x_i^* \|.
    \label{def: x_c distance}
\end{equation}

Next, we introduce a property, concerning algorithms in Algorithmic~Framework~\ref{alg: algorithm framework}, that builds on the states contraction property by further specifying a range for some of the parameters.

\begin{definition} \label{def: reduction}
Suppose Assumption~\ref{asm: convex} holds, and a resilient distributed optimization algorithm $A$ in R{\scriptsize{ED}}G{\scriptsize{RAF}} satisfies the $( \xc, \gamma, \{ c[k] \} )$-\textit{states contraction} property (for some $\xc \in \R^d$, $\gamma \in \R_{\geq 0}$, and $\{ c[k] \}_{k \in \N} \subset \R$). Algorithm $A$ is said to satisfy the reduction property of
\begin{itemize}
    \item Type-I if $\gamma \in [0, 1)$ and $\alpha_k = \alpha \in \big( 0, \frac{1}{\tilL} \big]$;
    \item Type-II if $\gamma \in \Big[ 1, \frac{1}{1 - \frac{\tilmu}{\tilL}} \Big)$ and $\alpha_k = \alpha \in \Big( \frac{1}{\tilmu} \big( 1 - \frac{1}{\gamma} \big), \frac{1}{\tilL} \Big]$.
\end{itemize}
\end{definition} 
This reduction property, in particular, will be used to specify the conditions under which an algorithm converges.

Let $\vr = \{ v_{i_1}, v_{i_2}, \ldots, v_{i_{| \vr |}} \}$ denote the set of all regular agents. For a set of vectors $\{ \u_i \}_{i \in \V} \subset \R^d$ and $\ell \in [d]$, we denote $\u^{(\ell)} = \big[ u^{(\ell)}_{i_1}, u^{(\ell)}_{i_2}, \ldots, u^{(\ell)}_{i_{| \vr |}} \big]^T \in \R^{| \vr |}$, the vector containing the $\ell$-th dimension of each vector $\u_i$ corresponding to the regular agents' indices.
The following definition characterizes the dynamics of all the regular agents in the network which will be a crucial ingredient in showing the approximate consensus result in subsection~\ref{subsec: consensus}.

\begin{definition} \label{def: mixing}
For a set of sequences of matrices $\{ \W^{(\ell)}[k] \}_{k \in \mathbb{N}, \; \ell \in [d]} \subset \mathbb{S}^{| \vr |}$ and constant $G \in \R_{\geq 0}$, a resilient distributed optimization algorithm $A$ in R{\scriptsize{ED}}G{\scriptsize{RAF}} is said to possess $( \{ \W^{(\ell)} [k] \}, G )$-\textit{mixing dynamics} if the state dynamics can be written as
\begin{equation}
    \x^{(\ell)} [k+1] = \W^{(\ell)} [k] \x^{(\ell)} [k] - \alpha_k \g^{(\ell)} [k]
    \label{def: perturbed mixing}
\end{equation}
for all $k \in \mathbb{N}$ and $\ell \in [d]$, the sequences of graphs $\{ \mathbb{G}(\W^{(\ell)} [k]) \}_{k \in \mathbb{N}}$ are repeatedly jointly rooted for all $\ell \in [d]$, and $\limsup_k \| \g_i [k] \|_\infty \leq G$ for all $v_i \in \vr$.
\end{definition}  
The matrix $\W^{(\ell)} [k]$ is called a \textit{mixing matrix} which directly affects the ability of the nodes to reach consensus \cite{cao2008reaching} while the constant $G$ quantifies an upper bound on the perturbation (i.e., the scaled gradient $\alpha_k \g^{(\ell)} [k]$) to the consensus process.

Later in subsection~\ref{subsec: algorithms}, particularly in Theorem~\ref{thm: contraction}, we will formally discuss the $(\xc, \gamma, \{ c[k] \})$-state contraction and the $(\{ \W^{(\ell)} [k] \}, G)$-mixing dynamics properties, along with the parameter values for each algorithm considered in subsection~\ref{subsec: existing algs}.

\subsection{The Region To Which The States Converge}
\label{subsec: convergence}

In this subsection, we derive a convergence result for some particular algorithms in R{\scriptsize{ED}}G{\scriptsize{RAF}} (Theorem~\ref{thm: main-convergence}). We start by establishing convergence to a neighborhood around the center point $\xc$ (Proposition~\ref{prop: convergence}). Following this, we present the main convergence theorem (Theorem~\ref{thm: main-convergence}), leveraging the fact that the minimizer $\x^*$, the solution to problem~\eqref{prob: regular node}, resides within this neighborhood (Lemma~\ref{appx-lem: true minimizer} in Appendix~\ref{subsec: true minimizer proof}).

For convenience, if Assumption~\ref{asm: convex} holds and the step-size $\alpha_k = \alpha$ for all $k \in \N$, we define
\begin{equation}
    \beta := \sqrt{1 - \alpha \tilmu}.
    \label{def: beta}
\end{equation}
We now present an intermediate result, demonstrating that the states of all the regular agents will converge to a ball for all algorithms in R{\scriptsize{ED}}G{\scriptsize{RAF}} that satisfy the reduction property (Definition~\ref{def: reduction}). The proof is provided in Appendix~\ref{subsec: convergence proof}.

\begin{proposition}  \label{prop: convergence}
Suppose Assumption~\ref{asm: convex} holds. If an algorithm $A$ satisfies the reduction property of Type-I or Type-II, then for all $v_i \in \vr$, it holds that
\begin{equation}
    \limsup_k \| \x_i [k] - \xc \| \leq \frac{\rc \sqrt{\alpha \tilL}}{1 - \beta \sqrt{\gamma}} := R^*,
    \label{eqn: conv thm limit}
\end{equation}
where $\xc$, $\rc$ and $\beta$ are defined in Definition~\ref{def: contraction}, \eqref{def: x_c distance}, and \eqref{def: beta}, respectively.
Furthermore, if $c[k] = \O (\xi^k)$ and $\xi \in (0, 1) \setminus \{ \beta \sqrt{\gamma} \}$, then 
\begin{equation}
    \| \x_i [k] - \xc \| \leq R^* + \O \big( ( \max \{ \beta \sqrt{\gamma}, \; \xi \} )^k \big)
    \quad \text{for all} \quad v_i \in \vr.
    \label{eqn: convergence rate}
\end{equation}
\end{proposition}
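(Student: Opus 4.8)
The whole statement reduces to a scalar linear recursion for $M[k] := \max_{v_j \in \vr}\|\x_j[k] - \xc\|$, fed by two ingredients: the states contraction property (available by hypothesis) and a one-step contraction for the Step~IV gradient update measured relative to the center $\xc$. The key lemma I would first prove is the per-agent estimate
\[
\|\x_i[k+1] - \xc\| \;\le\; \beta\,\|\Tilde{\x}_i[k] - \xc\| + \rc\sqrt{\alpha\tilL}.
\]
To obtain it, view the update as the map $T_i(\x) = \x - \alpha\nabla f_i(\x)$, whose unique fixed point is the local minimizer $\x_i^*$ since $\nabla f_i(\x_i^*) = \boldsymbol{0}$. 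Expanding $\|T_i(\x) - T_i(\y)\|^2$ and invoking $\mu_i$-strong convexity together with co-coercivity of the $L_i$-Lipschitz gradient, and using $\alpha \le 1/\tilL \le 1/L_i$ (valid in both Type-I and Type-II), shows $T_i$ is Lipschitz with constant $\sqrt{1-\alpha\mu_i} \le \beta$. Taking $\y = \xc$ gives $\|\x_i[k+1] - T_i(\xc)\| \le \beta\|\Tilde{\x}_i[k] - \xc\|$, while the residual obeys $\|T_i(\xc) - \xc\| = \alpha\|\nabla f_i(\xc) - \nabla f_i(\x_i^*)\| \le \alpha L_i\|\xc - \x_i^*\| \le \alpha\tilL\rc \le \sqrt{\alpha\tilL}\,\rc$, the last inequality using $\alpha\tilL \le 1$; a triangle inequality then yields the displayed lemma.

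Next I would substitute the contraction inequality $\|\Tilde{\x}_i[k]-\xc\| \le \sqrt\gamma\,M[k] + c[k]$ into this lemma and maximize over $v_i \in \vr$ to obtain the closed recursion
\[
M[k+1] \;\le\; \beta\sqrt\gamma\,M[k] + \beta\,c[k] + \rc\sqrt{\alpha\tilL}.
\]
I would then check that the reduction property forces $\beta\sqrt\gamma < 1$: for Type-I this is immediate ($\gamma<1$, $\beta<1$), and for Type-II the lower bound $\alpha > \tfrac{1}{\tilmu}(1-\tfrac1\gamma)$ is algebraically equivalent to $\gamma(1-\alpha\tilmu)<1$, i.e.\ $\beta\sqrt\gamma<1$, with the stated range of $\gamma$ being exactly what keeps the admissible interval of $\alpha$ nonempty. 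Given $\beta\sqrt\gamma<1$ and $c[k]\to0$, the standard limiting argument for $M[k+1]\le aM[k]+b+d[k]$ with $a<1$ and $d[k]\to0$ gives $\limsup_k M[k]\le b/(1-a)=R^*$; since $\|\x_i[k]-\xc\|\le M[k]$, \eqref{eqn: conv thm limit} follows for every regular agent.

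For the geometric rate, I would write $E[k]:=M[k]-R^*$ so that $R^*$ is the fixed point and $E[k+1]\le \beta\sqrt\gamma\,E[k]+\O(\xi^k)$. Unrolling yields $E[k]\le(\beta\sqrt\gamma)^kE[0]+\sum_{j=0}^{k-1}(\beta\sqrt\gamma)^{k-1-j}\O(\xi^j)$, and bounding the convolution of the two geometric sequences---where the hypothesis $\xi\ne\beta\sqrt\gamma$ is used to rule out the resonant term of order $k(\beta\sqrt\gamma)^k$---gives $E[k]=\O\big((\max\{\beta\sqrt\gamma,\xi\})^k\big)$, which is \eqref{eqn: convergence rate}.

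\textbf{Main obstacle.} The only non-routine step is the one-step lemma: showing the gradient map contracts toward $\x_i^*$ with factor $\beta$ and then transferring this to the common center $\xc$ with an additive error controlled by $\rc$. After that, assembling the recursion, verifying $\beta\sqrt\gamma<1$ from the reduction property, and solving the scalar recursion (for both the $\limsup$ and the rate) are routine.
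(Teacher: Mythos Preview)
Your proposal is correct and follows the same overall architecture as the paper: derive a one-step bound of the form $\|\x_i[k+1]-\xc\|\le\beta\|\Tilde{\x}_i[k]-\xc\|+\rc\sqrt{\alpha\tilL}$, combine it with the states contraction property to obtain the scalar recursion $M[k+1]\le\beta\sqrt\gamma\,M[k]+\beta c[k]+\rc\sqrt{\alpha\tilL}$, verify $\beta\sqrt\gamma<1$ from the reduction property, and then pass to the $\limsup$ and unroll for the rate.

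The only difference is in how the one-step bound is obtained. The paper expands $\|\x_i[k+1]-\xc\|^2$ directly and uses the function-value forms of strong convexity and the Lipschitz gradient (inequalities \eqref{def: strongly convex 1}, \eqref{def: lipschitz gradient 1}, \eqref{def: lipschitz gradient 2}) to arrive at the squared inequality $\|\x_i[k+1]-\xc\|^2\le\beta^2\|\Tilde{\x}_i[k]-\xc\|^2+\alpha\tilL\rc^2$, then takes a square root. You instead invoke the standard contraction-map fact that $T_i(\x)=\x-\alpha\nabla f_i(\x)$ is $\sqrt{1-\alpha\mu_i}$-Lipschitz when $\alpha\le 1/L_i$ (via strong convexity plus co-coercivity), apply it between $\Tilde{\x}_i[k]$ and $\xc$, and handle the shift $T_i(\xc)\to\xc$ by a triangle inequality with $\|T_i(\xc)-\xc\|=\alpha\|\nabla f_i(\xc)\|\le\alpha\tilL\rc$. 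Your route is slightly more modular and in fact yields the tighter additive constant $\alpha\tilL\rc$ before you relax it to $\sqrt{\alpha\tilL}\,\rc$ to match the stated $R^*$; the paper's squared-distance route reaches $\sqrt{\alpha\tilL}\,\rc$ directly. Either way the resulting recursion, the verification that $\beta\sqrt\gamma<1$, and the $\limsup$/rate analysis are identical.
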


We refer to $R^*$ in \eqref{eqn: conv thm limit} as the \textit{convergence radius}, and we denote $\B ( \xc, R^* )$ as the \textit{convergence region}. Additionally, the term $\beta \sqrt{\gamma}$ in \eqref{eqn: convergence rate} is referred to as the \textit{convergence rate}\footnote{In this context, a smaller convergence rate indicates faster convergence.}. In particular, the convergence region is the ball which has the center at $\xc$ and the radius $R^*$ depending on the functions' parameters $\mu_i$ and $L_i$, the contraction factor $\gamma$, the constant step-size $\alpha$, and the constant capturing the position of the contraction center $\rc$ (defined in \eqref{def: x_c distance}). We emphasize that the convergence region does not depend on the contraction perturbation sequence $\{ c[k] \}_{k \in \N}$ as long as the sequence converges to zero. 

To analyze the expression of $R^*$ and the convergence rate $\beta \sqrt{\gamma}$, we simplify the expression as follows. Let $\kappa = \frac{\tilL}{\tilmu}$, $\tilalph = \alpha \tilmu$, and $\text{dom}_{R^*} = \Big\{ ( \gamma, \tilalph ) \in \R^2: \gamma \in [0, \infty) \; \text{and} \; \tilalph \in \big( \max \big\{ 0, 1 - \frac{1}{\gamma} \big\}, 1 \big] \Big\}$.
Then for $R^*$, we normalize the expression by $r_c \sqrt{\kappa}$. As a result, we have the convergence rate, $rate: \text{dom}_{R^*} \to [0, 1)$, and (normalized) convergence radius, $\Rnorm: \text{dom}_{R^*} \to \R_{\geq 0}$, as
\begin{equation}
    rate ( \gamma, \tilalph ) = \sqrt{\gamma} \sqrt{1 - \tilalph}
    \quad \text{and} \quad
    \Rnorm ( \gamma, \tilalph ) = \frac{\sqrt{\tilalph}}{1 - \sqrt{\gamma} \sqrt{1 - \tilalph}},
    \label{def: norm-conv}
\end{equation}
respectively. Note that the variable $\kappa$ can be interpreted as an upper bound on the \textit{condition numbers} of $\{ f_i \}_{\vr}$ \cite{gutman2021condition}. The plots regarding the convergence rate ($rate$) and normalized convergence radius ($\Rnorm$) with respect to the scaled constant step-size ($\tilalph$) for some values of $\gamma$ are given in Figures~\ref{fig: conv-rate} and \ref{fig: conv-radius}, respectively.

\begin{figure}%[htp]
\centering
\subfloat[Convergence Rate]{\includegraphics[width=.45\textwidth]{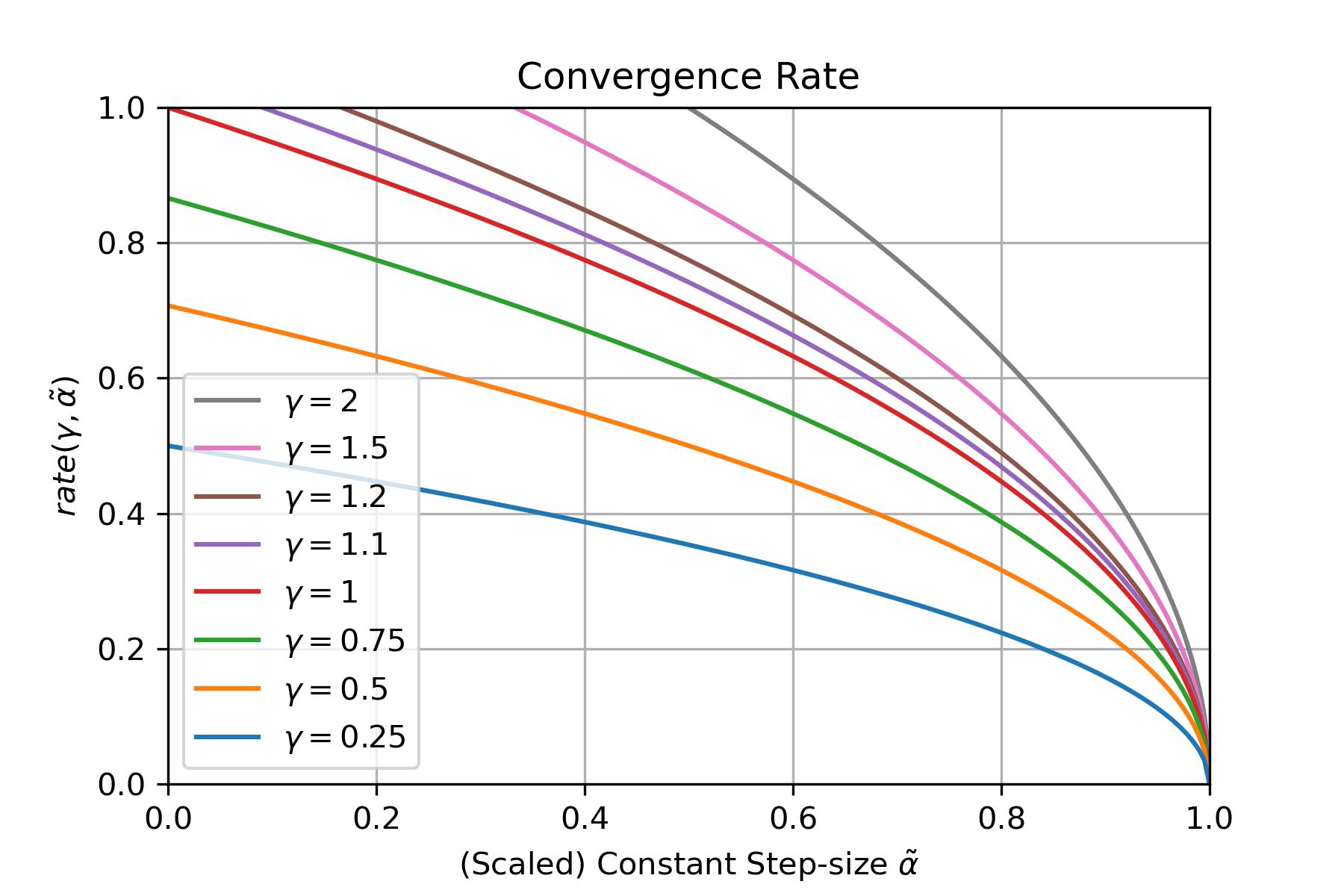} \label{fig: conv-rate}} \;\;
\subfloat[Normalized Convergence Radius]{\includegraphics[width=.45\textwidth]{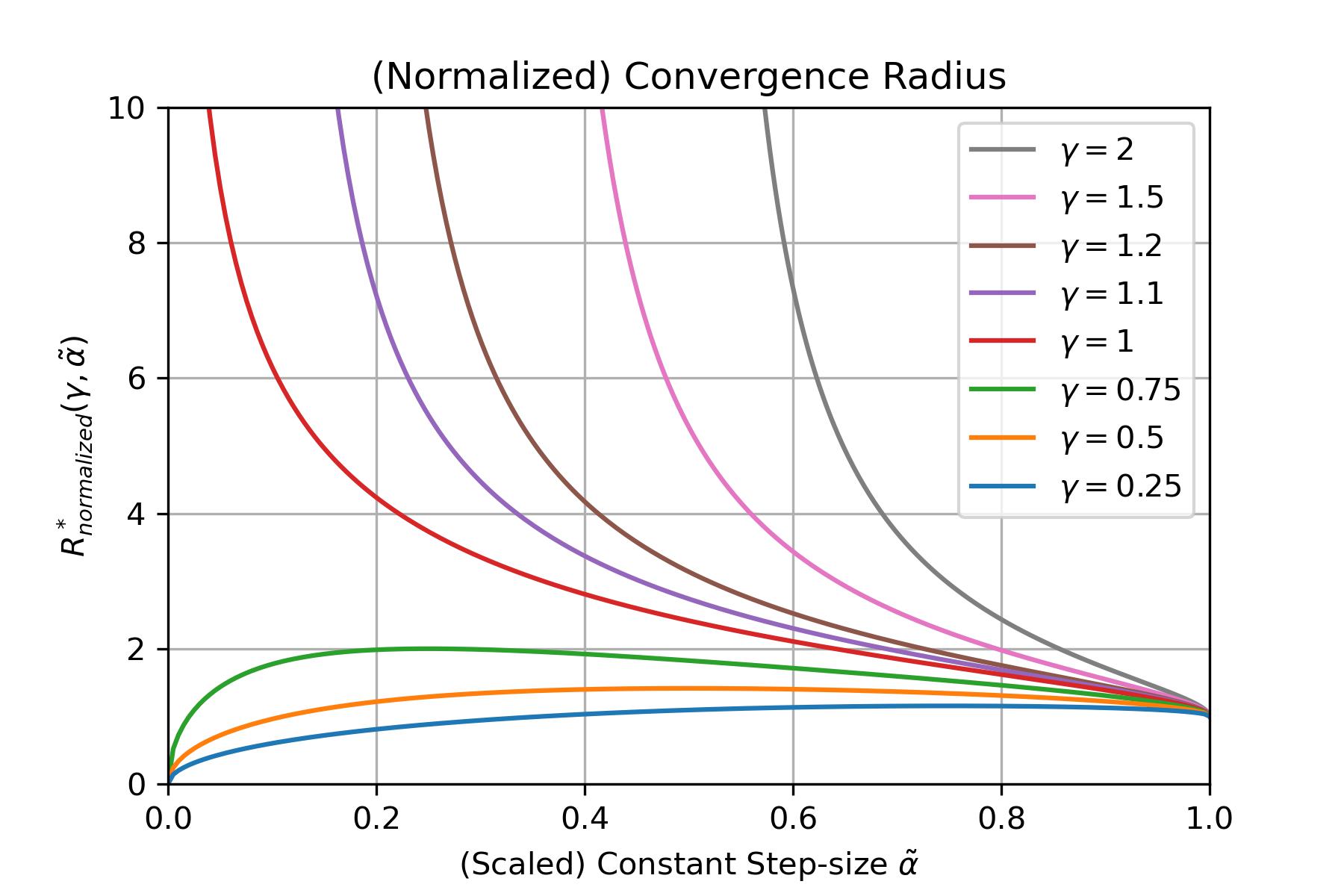} \label{fig: conv-radius}}
\caption{The convergence rate and the normalized convergence radius for different values of the contraction factor $\gamma$ and for legitimate values of the scaled constant step-size $\tilalph$.}
\end{figure}

From \eqref{eqn: convergence rate}, we can conclude that the states of the regular agents converge \textbf{geometrically} to the convergence region $\B ( \xc, R^* )$. Furthermore, it is evident from the inequality \eqref{eqn: convergence rate} and Figure~\ref{fig: conv-rate} that as the constant step-size $\alpha$ increases, the convergence rate decreases (i.e., the states of regular agents converge faster).

Considering the expression of the (normalized) convergence radius $\Rnorm$ in \eqref{def: norm-conv}, it should be noted that $R^*$ is strictly increasing with respect to $\gamma$. In addition, applying Lemma~\ref{lem: R* decreasing} and noting that $\Rnorm$ is a continuous function with respect to $\tilalph$, we can conclude as follows.
\begin{itemize}
    \item For $\gamma \in [0, 1)$, a small constant step-size $\alpha$ would yield a small convergence radius $R^*$ (since $R^* \big|_{\tilalph=0} = 0$ and $R^* \big|_{\tilalph=1} = \rc \sqrt{\kappa}$). Furthermore, we have $R^* \leq R^* \big|_{\tilalph = 1 - \gamma} = \frac{\rc \sqrt{\kappa}}{\sqrt{1 - \gamma}}$ for all valid values of $\alpha$.
    \item For $\gamma \in [1, \infty)$, the optimal convergence radius is obtained by choosing $\alpha = \frac{1}{\tilL}$ (due to the condition on $\alpha$ in Proposition~\ref{prop: convergence}) and the corresponding radius is $R^* \big|_{\tilalph = \frac{1}{\kappa}} = \frac{\rc}{1 - \sqrt{\gamma} \sqrt{1 - \frac{1}{\kappa}}}$.
\end{itemize}

Additionally, the explicit characterization of $R^*$ in \eqref{eqn: conv thm limit} also allows us to analyze its behavior with respect to the (scaled) constant step-size $\tilalph$ when $\tilalph$ is closed to the respective lower bound.
\begin{itemize}
    \item When $\gamma \in [0, 1 )$ and $\tilalph$ approaches $0^+$, we have that $R^* \approx \frac{r_c \sqrt{\kappa}}{1 - \sqrt{\gamma}} \cdot \tilalph^{\frac{1}{2}}$.
    \item When $\gamma = 1$ and $\tilalph$ approaches $0^+$, we have that $R^* \approx 2 r_c \sqrt{\kappa} \cdot \tilalph^{-\frac{1}{2}}$.
    \item When $\gamma \in (1, \infty)$ and $\tilalph$ approaches $\big( 1 - \frac{1}{\gamma} \big)^+$, we have that $R^* \approx \frac{2 r_c \sqrt{\kappa}}{\gamma} \cdot \sqrt{1 - \frac{1}{\gamma}} \cdot \hat{\alpha}^{-1}$, where $\hat{\alpha} = \tilalph - \big( 1 - \frac{1}{\gamma} \big)$.
\end{itemize}

Next, recall that $\x^* \in \R^d$ is the minimizer of the function $\frac{1}{| \vr |} \sum_{v_i \in \vr} f_i (\x)$, which is our objective function (problem~\eqref{prob: regular node}). Lemma~\ref{appx-lem: true minimizer}, which is formally stated and proved in Appendix~\ref{subsec: true minimizer proof}, informs us that the true minimizer $\x^*$ is within the convergence region $\B (\xc, R^*)$, provided a certain condition on $\gamma$ and $\alpha$ holds. This condition, in fact, aligns with the reduction property of Type-II (see Definition~\ref{def: reduction}). Consequently, the geometric convergence of regular agents to a neighborhood of the true minimizer $\x^*$, as shown in Theorem~\ref{thm: main-convergence}, follows directly from applying Lemma~\ref{appx-lem: true minimizer} to Proposition~\ref{prop: convergence}. However, it is important to note that determining the true minimizer $\x^*$ exactly is impossible in the presence of Byzantine agents.

\begin{theorem}[Convergence]  \label{thm: main-convergence}
Suppose Assumption~\ref{asm: convex} holds and an algorithm $A$ satisfies the reduction property of Type-II. If the perturbation sequence $c[k] = \O (\xi^k)$, where $\xi \in (0, 1) \setminus \{ \beta \sqrt{\gamma} \}$, then it holds that
\begin{equation*}
    \| \x_i [k] - \x^* \| \leq 2 R^* + \O \big( (\max \{ \beta \sqrt{\gamma}, \; \xi \} )^k \big)
    \quad \text{for all} \quad v_i \in \vr,
\end{equation*}
where the convergence radius $R^*$ is defined in \eqref{eqn: conv thm limit}.
\end{theorem}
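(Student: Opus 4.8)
The plan is to obtain the bound by combining the two ingredients assembled in this subsection—the geometric convergence of each regular state to a neighborhood of the contraction center $\xc$ (Proposition~\ref{prop: convergence}) and the fact that the true minimizer $\x^*$ lies inside that same neighborhood (Lemma~\ref{appx-lem: true minimizer})—and then passing from $\xc$ to $\x^*$ by the triangle inequality. In other words, the theorem is essentially a corollary: the analytic work has already been done, and what remains is to glue the two statements together.

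First I would invoke Proposition~\ref{prop: convergence}. Since algorithm $A$ satisfies the reduction property of Type-II, the hypotheses of the proposition (which requires Type-I \emph{or} Type-II) are met, so together with the assumed geometric decay $c[k] = \O(\xi^k)$ with $\xi \in (0,1) \setminus \{\beta\sqrt{\gamma}\}$, the estimate \eqref{eqn: convergence rate} gives, for every $v_i \in \vr$,
\begin{equation*}
    \| \x_i [k] - \xc \| \leq R^* + \O\big( ( \max \{ \beta \sqrt{\gamma}, \; \xi \} )^k \big),
\end{equation*}
with $R^*$ as in \eqref{eqn: conv thm limit}.

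Next I would locate the minimizer. The Type-II restrictions $\gamma \in [1, (1-\tilmu/\tilL)^{-1})$ and $\alpha \in (\tilmu^{-1}(1 - \gamma^{-1}), \tilL^{-1}]$ are, as noted in the prose preceding the theorem, precisely the condition under which Lemma~\ref{appx-lem: true minimizer} guarantees $\x^* \in \B(\xc, R^*)$, i.e.\ $\|\x^* - \xc\| \leq R^*$. Combining this with the previous display through $\|\x_i[k] - \x^*\| \leq \|\x_i[k] - \xc\| + \|\xc - \x^*\|$ yields the claimed $2R^*$ bound while leaving the $\O$-term unchanged, which completes the argument.

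The only genuine content lies outside this theorem, in Lemma~\ref{appx-lem: true minimizer}: one must show that under the Type-II regime the ball $\B(\xc, R^*)$ is large enough to contain $\x^*$. I expect the proof of that lemma to be where strong convexity together with the Lipschitz-gradient bound is used to control $\|\x^* - \xc\|$ in terms of $\rc$, $\kappa$, and the scaled step-size, and I anticipate that the lower bound on $\alpha$ in Type-II is exactly what forces $R^*$ to dominate this quantity. Granting that lemma, the present theorem reduces to the two-line combination of Proposition~\ref{prop: convergence} and the triangle inequality sketched above, so I do not foresee any further obstacle here.
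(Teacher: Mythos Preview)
Your proposal is correct and matches the paper's own approach: the paper states explicitly that Theorem~\ref{thm: main-convergence} ``follows directly from applying Lemma~\ref{appx-lem: true minimizer} to Proposition~\ref{prop: convergence},'' which is precisely the triangle-inequality combination you describe. Your identification of the Type-II hypothesis as the condition needed to invoke Lemma~\ref{appx-lem: true minimizer} is also exactly how the paper uses it.
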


Before comparing our obtained convergence rate with those in the literature, let us first assume that the $f_i$ are $\mu$-strongly convex and have an $L$-Lipschitz continuous gradient for all $v_i \in \vr$, i.e., $\tilmu = \mu$ and $\tilL = L$. Since our work is the first to achieve a linear convergence rate for Byzantine-resilient algorithms under these assumptions, we compare our results with non-resilient algorithms in the literature that consider the same assumptions.

To begin, recall that the convergence rate derived from our approach (see Theorem~\ref{thm: main-convergence}) is $\sqrt{\gamma} \sqrt{1 - \alpha \mu}$. 
For DGD with a constant step-size $\alpha$ \cite{yuan2016convergence}, it has been shown that the convergence rate is $\sqrt{1 - \frac{\alpha}{2} \cdot \frac{\mu L}{\mu + L}}$. Since it is the case that $\mu \leq L$, the best rate for DGD guaranteed by this work is $\sqrt{1 - \frac{1}{2} \alpha \mu}$ (which is the case when $\mu \ll L$).
For DIGing \cite{nedic2017achieving}, representing an algorithm from distributed optimization algorithms with the gradient tracking technique, it has been shown that the convergence rate is $\sqrt{1 - \frac{2}{3} \alpha \mu}$. Since a smaller rate yields faster convergence, our rate of convergence is superior to traditional DGD and DIGing (even though they all have the same order of $1 - \mathcal{O} (\alpha \mu)$). These convergence rates are summarized in Table~\ref{tab: convergence_rates}.

\begin{table}[t]
\caption{Convergence Rates of Algorithms Under Strong Convexity and Lipschitz Gradient Assumptions}
\centering \footnotesize
\begin{tabular}{|c|c|c|c|}
\hline
 & DGD \cite{yuan2016convergence} & DIGing \cite{nedic2017achieving} & \makecell{Our Work \\ (Theorem~\ref{thm: main-convergence})}   \\ 
\hline
\makecell{Convergence Rate \\ (lower is better)} & $\sqrt{1 - \frac{1}{2} \alpha \mu}$ & $\sqrt{1 - \frac{2}{3} \alpha \mu}$ & $\sqrt{1 - \alpha \mu}$ \\ 
\hline
\end{tabular}
\label{tab: convergence_rates}
\end{table}

\begin{remark}
Crucially, even in the absence of Byzantine agents, it is important to highlight that the states of regular agents within our framework converge to a neighborhood of $\x^*$, as formally affirmed in Theorem~\ref{thm: main-convergence}. Notably, this convergence to a neighborhood stands as a fundamental trait of Byzantine distributed optimization problems, regardless of the specific algorithms employed \cite{su2015byzantine, sundaram2018distributed}. This is in contrast to algorithms in traditional distributed optimization settings, where convergence to a neighborhood is an inherent property of the algorithm itself and can be avoided by changing the algorithm. Consequently, comparing the convergence radius between Byzantine and non-Byzantine settings may not be suitable.
\end{remark}

Our result from Theorem~\ref{thm: main-convergence} offers a different approach to convergence proofs than those typically found in the literature, which are often designed for specific algorithms. By focusing on proving the states contraction property (Definition~\ref{def: contraction}), rather than the details of the functions involved, one can save a considerable amount of time and effort. However, it is worth noting that this approach only provides a sufficient condition for convergence. There may be resilient algorithms that do not satisfy the property but still converge geometrically. In fact, finding general necessary conditions for convergence in resilient distributed optimization remains an open question in the literature.

\begin{remark}
The work \cite{wu2023byzantine} introduces a contraction property which seems to be similar to Definition~\ref{def: contraction}. However, there is a subtle difference in that their contraction center is time-varying (since it is a function of neighbors' states) while it is a constant (but depends on algorithms) in our case. However, it is unclear whether their notion of contraction allows for the proof of geometric convergence, as demonstrated in Proposition~\ref{prop: convergence} and Theorem~\ref{thm: main-convergence}.
\end{remark}

Having established convergence of all regular agent's values to a ball that contains the true minimizer, we now turn our attention to characterizing the distance between the regular agents' values within that ball.
Given the reduction property in Definition~\ref{def: reduction} (either Type-I or Type-II), we can use \eqref{eqn: conv thm limit} to derive a bound on the distance between the values held by different nodes: $\limsup_k \| \x_i [k] - \x_j [k] \| \leq 2 R^*$ for all $v_i, v_j \in \vr$. However, this bound is not particularly useful since the right-hand side quantity can be large. In the next subsection, we will demonstrate that the mixing dynamics (Definition~\ref{def: mixing}) and a constant step-size are sufficient to obtain a more meaningful bound on the approximate consensus.

\subsection{Convergence to Approximate Consensus of States}
\label{subsec: consensus}

The following proposition characterizes the approximate consensus among the regular agents in the network under the mixing dynamics (Definition~\ref{def: mixing}) and a constant step-size (proved in Appendix~\ref{subsec: proof of consensus proposition}).

\begin{proposition}  \label{prop: consensus}
If an algorithm $A$ in R{\scriptsize{ED}}G{\scriptsize{RAF}} satisfies the \; $( \{ \W^{(\ell)} [k] \}, G )$-mixing dynamics property (for some $\{ \W^{(\ell)}[k] \}_{k \in \mathbb{N}, \; \ell \in [d]} \subset \mathbb{S}^{| \vr |}$ and $G \in \R_{\geq 0}$) and $\alpha_k = \alpha$ for all $k \in \mathbb{N}$, then there exist $\rho \in \R_{\geq 0}$ and $\lambda \in (0, 1)$ such that 
\begin{equation}
    \limsup_k \| \x_i [k] - \x_j [k] \| \leq \frac{\alpha \rho G \sqrt{d}}{1 - \lambda}
    \quad \text{for all} \quad v_i, v_j \in \vr.
    \label{eqn: lim approx consensus}
\end{equation}
\end{proposition}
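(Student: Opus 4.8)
The plan is to treat the mixing update \eqref{def: perturbed mixing} as a perturbed (time-varying) consensus recursion and to unroll it coordinate by coordinate. For each $\ell \in [d]$, define the transition matrices $\Phi^{(\ell)}(k,s) := \W^{(\ell)}[k-1]\,\W^{(\ell)}[k-2]\cdots\W^{(\ell)}[s]$ for $k > s$ and $\Phi^{(\ell)}(k,k) := \I$, each of which is row-stochastic as a product of row-stochastic matrices. Since $\alpha_k = \alpha$ is constant, iterating \eqref{def: perturbed mixing} gives
\[
    \x^{(\ell)}[k] = \Phi^{(\ell)}(k,0)\,\x^{(\ell)}[0] - \alpha \sum_{t=0}^{k-1} \Phi^{(\ell)}(k,t+1)\,\g^{(\ell)}[t].
\]
The disagreement between two regular agents $v_i, v_j \in \vr$ in coordinate $\ell$ is $x_i^{(\ell)}[k] - x_j^{(\ell)}[k] = (\boldsymbol{e}_i - \boldsymbol{e}_j)^T \x^{(\ell)}[k]$, where $\boldsymbol{e}_i$ denotes the $i$-th standard basis vector of $\R^{|\vr|}$. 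Because $(\boldsymbol{e}_i - \boldsymbol{e}_j)^T \boldsymbol{1} = 0$, each stochastic factor enters only through how much its rows disagree, which is exactly the quantity the consensus structure will let us contract.

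The crux is a uniform geometric contraction estimate for the products $\Phi^{(\ell)}(k,s)$. I would quantify disagreement through the (Dobrushin) coefficient of ergodicity $\tau(P) := \tfrac{1}{2}\max_{i,j}\|(\boldsymbol{e}_i - \boldsymbol{e}_j)^T P\|_1$, which is submultiplicative and satisfies $\tau(P) < 1$ precisely when $P$ is scrambling. Using the \emph{repeatedly jointly rooted} hypothesis with its period $q$, together with the facts that each $\W^{(\ell)}[k]$ has positive diagonal entries (each agent retains its own state) and that the nonzero entries of the mixing matrices are uniformly bounded below (as they are built from the averaging weights, cf.\ Assumption~\ref{asm: weight matrices}), I would invoke the analysis of \cite{cao2008reaching}: every block $\Phi^{(\ell)}(s+q, s)$ has a jointly rooted associated graph, so the root's influence reaches every agent, and the entrywise lower bound then forces this block product to be scrambling with $\tau(\Phi^{(\ell)}(s+q,s)) \leq \bar\delta$ for some $\bar\delta \in (0,1)$ independent of $s$. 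Submultiplicativity across consecutive blocks yields constants $C > 0$ and $\lambda \in (0,1)$, uniform over the finitely many coordinates, such that
\[
    \big\| (\boldsymbol{e}_i - \boldsymbol{e}_j)^T \Phi^{(\ell)}(k,s) \big\|_1 \leq C \lambda^{k-s}
    \quad \text{for all } k \geq s,\ \ell \in [d].
\]
Establishing this decay over the \emph{infinite} horizon is the main obstacle, and it is exactly where the \emph{repeatedly} (not merely once) jointly rooted structure and the uniform entrywise lower bound are indispensable.

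With the contraction estimate in hand the remainder is routine. Applying it to the unrolled identity together with the bound $|(\boldsymbol{e}_i - \boldsymbol{e}_j)^T \Phi^{(\ell)}(k,s)\,\v| \leq \|(\boldsymbol{e}_i - \boldsymbol{e}_j)^T \Phi^{(\ell)}(k,s)\|_1 \|\v\|_\infty$ gives
\[
    \big|x_i^{(\ell)}[k] - x_j^{(\ell)}[k]\big| \leq C \lambda^{k}\,\|\x^{(\ell)}[0]\|_\infty + \alpha \sum_{t=0}^{k-1} C \lambda^{k-t-1}\,\|\g^{(\ell)}[t]\|_\infty.
\]
The first term vanishes as $k \to \infty$. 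For the sum, the hypothesis $\limsup_k \|\g_i[k]\|_\infty \leq G$ and the finiteness of $\vr$ give $\limsup_t \|\g^{(\ell)}[t]\|_\infty \leq G$; splitting the sum into a fixed finite head (which decays like $\lambda^{k}$) and a tail on which $\|\g^{(\ell)}[t]\|_\infty \leq G + \epsilon$, and summing the geometric series $\sum_{t} \lambda^{k-t-1} \leq (1-\lambda)^{-1}$, I obtain $\limsup_k |x_i^{(\ell)}[k] - x_j^{(\ell)}[k]| \leq \alpha C (G+\epsilon)/(1-\lambda)$, and letting $\epsilon \downarrow 0$ gives $\alpha C G/(1-\lambda)$. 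Finally, combining coordinates through $\|\x_i[k] - \x_j[k]\| \leq \sqrt{d}\,\max_{\ell} |x_i^{(\ell)}[k] - x_j^{(\ell)}[k]|$ and exchanging the finite maximum with the $\limsup$ yields $\limsup_k \|\x_i[k] - \x_j[k]\| \leq \alpha C G \sqrt{d}/(1-\lambda)$, which is the claim \eqref{eqn: lim approx consensus} with $\rho := C$.
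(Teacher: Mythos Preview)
Your proof is correct and follows essentially the same route as the paper's: both unroll \eqref{def: perturbed mixing} coordinate-wise via transition matrices $\Phi^{(\ell)}$, invoke \cite{cao2008reaching} on the repeatedly-jointly-rooted sequence to obtain a uniform geometric contraction $\rho'\lambda^{k-s}$, bound the resulting convolution with $\|\g^{(\ell)}[s]\|_\infty$, and aggregate across the $d$ coordinates with a $\sqrt{d}$ factor. The only differences are cosmetic --- the paper measures disagreement relative to a time-varying center $\bar{\x}^{(\ell)}[k] = \boldsymbol{1}\boldsymbol{q}^{(\ell)T}[k]\x^{(\ell)}[k]$ (hence a factor $2$ via the triangle inequality) rather than directly through $(\boldsymbol{e}_i-\boldsymbol{e}_j)^T$, and handles the $\limsup$ of the convolution via Lemma~\ref{lem: limsup liminf sum of prod} instead of your explicit head--tail split; one small caveat is that a single $q$-block $\Phi^{(\ell)}(s+q,s)$ need not itself be scrambling (rootedness of the composition only guarantees this after $O(|\vr|)$ blocks), but since you ultimately defer the contraction estimate to \cite{cao2008reaching} this does not affect the argument.
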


From the consensus theorem above, we note that $\max_{v_i, v_j \in \vr} \| \x_i [k] - \x_j [k] \| = \O (\alpha \sqrt{d})$ if $G$ does not depend on the constant step-size $\alpha$ and the dimension $d$.

\begin{remark}
According to \cite{cao2008reaching}, the quantity $\lambda \in (0, 1)$ depends only on the network topology (for each time-step) induced by the sequence of graphs $\{ \mathbb{G} (\W^{(\ell)} [k]) \}$ while the quantity $\rho \in \R_{\geq 0}$ depends on the number of regular agents $| \vr |$ and the quantity $\lambda$.
\end{remark}

In fact, the states contraction property (Definition~\ref{def: contraction}) implies a bound on the gradient $\| \g_i [k] \|_\infty$ (the formal statement is provided in Appendix~\ref{subsec: grad bound}) which is one of the requirements of the mixing dynamics. Thus, we can achieve a similar approximate consensus result as Proposition~\ref{prop: consensus} given that an algorithm satisfies the reduction property (Definition~\ref{def: reduction}) and the associated sequence of graphs for each dimension is repeatedly jointly rooted as shown in the following theorem whose proof is provided in Appendix~\ref{subsec: consensus theorem}.

\begin{theorem}[Consensus] \label{thm: consensus}
Suppose Assumption~\ref{asm: convex} holds and an algorithm $A$ satisfies the reduction property of Type-I or Type-II. If the dynamics of the regular states can be written as \eqref{def: perturbed mixing}, where $\{ \mathbb{G}(\W^{(\ell)} [k]) \}_{k \in \mathbb{N}}$ is repeatedly jointly rooted for all $\ell \in [d]$, then there exist $\rho \in \R_{\geq 0}$ and $\lambda \in (0, 1)$ such that
\begin{equation}
    \limsup_k \| \x_i [k] - \x_j [k] \| 
    \leq \frac{\alpha \rho \rc \tilL \sqrt{d}}{1 - \lambda} \Bigg( 1 + \frac{\sqrt{\alpha \gamma \tilL}}{1 - \beta \sqrt{\gamma}} \bigg) := D^*
    \;\; \text{for all} \;\; v_i, v_j \in \vr,
    \label{eqn: cor lim approx consensus}
\end{equation}
where $\rc$ and $\beta$ are defined in \eqref{def: x_c distance} and \eqref{def: beta}, respectively.
Furthermore, if $c[k] = \O(\xi^k)$ where $\xi \in (0, 1) \setminus \{ \beta \sqrt{\gamma} \}$, then there exist $\rho \in \R_{\geq 0}$ and $\lambda \in (0, 1)$ such that
\begin{equation}
    \| \x_i [k] - \x_j [k] \| \leq D^* + \O \big( ( \max \{ \beta \sqrt{\gamma}, \xi, \lambda \} )^k \big)
    \quad \text{for all} \quad v_i, v_j \in \vr.
    \label{eqn: cor approx consensus}
\end{equation}
\end{theorem}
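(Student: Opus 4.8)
The plan is to observe that the two hypotheses of this theorem supply, for free, two of the three requirements of the $(\{\W^{(\ell)}[k]\}, G)$-mixing dynamics property (Definition~\ref{def: mixing}): the representation \eqref{def: perturbed mixing} of the dynamics and the repeatedly-jointly-rooted condition on the graph sequences $\{\mathbb{G}(\W^{(\ell)}[k])\}$. The only missing piece is a uniform asymptotic bound $\limsup_k \|\g_i[k]\|_\infty \leq G$ on the filtered gradients. Thus the proof splits into (i) manufacturing an explicit such $G$ from the reduction property, after which the limiting bound \eqref{eqn: cor lim approx consensus} is an immediate consequence of Proposition~\ref{prop: consensus}, and (ii) upgrading this to the geometric-rate statement \eqref{eqn: cor approx consensus}.

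For the gradient bound, I would start from the Lipschitz-gradient part of Assumption~\ref{asm: convex} together with $\nabla f_i(\x_i^*) = \boldsymbol{0}$ to write $\|\g_i[k]\| = \|\nabla f_i(\Tilde{\x}_i[k]) - \nabla f_i(\x_i^*)\| \leq \tilL\|\Tilde{\x}_i[k] - \x_i^*\|$. Inserting the contraction center via the triangle inequality gives $\|\Tilde{\x}_i[k] - \x_i^*\| \leq \|\Tilde{\x}_i[k] - \xc\| + \rc$, with $\rc$ as in \eqref{def: x_c distance}. The states contraction property presupposed by the reduction property then bounds $\|\Tilde{\x}_i[k] - \xc\|$ by $\sqrt{\gamma}\max_{v_j \in \vr}\|\x_j[k] - \xc\| + c[k]$, and since Proposition~\ref{prop: convergence} yields $\limsup_k \|\x_j[k] - \xc\| \leq R^*$ for each regular $v_j$ and $c[k] \to 0$, taking $\limsup$ (using that the limsup of a maximum over the finite regular set equals the maximum of the limsups) produces $\limsup_k \|\g_i[k]\|_\infty \leq \tilL(\sqrt{\gamma}R^* + \rc) =: G$. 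Substituting $R^* = \rc\sqrt{\alpha\tilL}/(1 - \beta\sqrt{\gamma})$ from \eqref{eqn: conv thm limit} gives the closed form $G = \tilL\rc\big(1 + \sqrt{\alpha\gamma\tilL}/(1-\beta\sqrt{\gamma})\big)$; this is precisely the gradient-bound lemma of Appendix~\ref{subsec: grad bound}. Feeding this $G$ into Proposition~\ref{prop: consensus} and simplifying $\alpha\rho G\sqrt{d}/(1-\lambda)$ reproduces $D^*$, establishing \eqref{eqn: cor lim approx consensus}.

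The main obstacle is the rate statement \eqref{eqn: cor approx consensus}, because Proposition~\ref{prop: consensus} only delivers a $\limsup$ and hides the transient. Here I would open up the analysis behind Proposition~\ref{prop: consensus} (which follows \cite{cao2008reaching}), in which the inter-agent disagreement at step $k$ is expressed as an accumulation of the past perturbations $\alpha\g^{(\ell)}[s]$, discounted by the contraction of the backward products of the mixing matrices; the repeatedly-jointly-rooted hypothesis guarantees that this contraction decays geometrically at some rate $\lambda \in (0,1)$. This gives a bound of the essential form $\sum_{s \leq k} \lambda^{\,\Theta(k - s)}\,\alpha\|\g[s]\|_\infty$. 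Under $c[k] = \O(\xi^k)$, the rate version \eqref{eqn: convergence rate} of Proposition~\ref{prop: convergence} shows the filtered states --- and hence, via the Lipschitz bound above, the gradients --- approach their steady state geometrically, so that $\|\g_i[s]\|_\infty \leq G + \O((\max\{\beta\sqrt{\gamma}, \xi\})^s)$.

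Splitting the accumulation accordingly, the steady-state part reconstitutes $D^*$ while the excess part reduces to a convolution of two geometric sequences, $\sum_{s \leq k}\lambda^{k-s}\mu^s$ with $\mu = \max\{\beta\sqrt{\gamma}, \xi\}$, which is $\O((\max\{\lambda, \mu\})^k)$ and hence yields the advertised combined rate $\max\{\beta\sqrt{\gamma}, \xi, \lambda\}$. The one delicate point is the boundary case where two or more of these rates coincide: the convolution then carries an extra factor linear in $k$, but this polynomial correction is absorbed by passing to any marginally larger base, leaving the stated $\O(\cdot)$ intact. Throughout I would keep $\rho$ and $\lambda$ as the topology-dependent constants furnished by \cite{cao2008reaching}, exactly as in the statement.
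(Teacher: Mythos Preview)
The proposal is correct and follows essentially the same approach as the paper: the first part combines the gradient-bound lemma of Appendix~\ref{subsec: grad bound} (which you rederive) with Proposition~\ref{prop: consensus} to obtain \eqref{eqn: cor lim approx consensus}, and the second part feeds the non-asymptotic gradient bound into the finite-time estimate \eqref{eqn: approx consensus} of Proposition~\ref{appx-prop: consensus} and evaluates the resulting geometric convolution. Your treatment of the coincident-rate case by absorbing the polynomial factor into a slightly larger base is exactly the paper's device of replacing $\lambda$ by $\lambda' = \lambda + \epsilon$; the only (harmless) imprecision is the parenthetical ``the limsup of a maximum over the finite regular set equals the maximum of the limsups,'' which need not hold as an equality in general, though the inequality you actually need ($\limsup_k \max_j \leq R^*$) does follow from $\limsup_k$ of each term being at most $R^*$ and finiteness of $\vr$.
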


We refer to $D^*$ in \eqref{eqn: cor lim approx consensus} as the \textit{approximate consensus diameter}. From \eqref{eqn: cor approx consensus}, we can conclude that the distance between any two regular agents' states converge \textbf{geometrically} to the approximate consensus diameter $D^*$. Furthermore, as suggested by \eqref{eqn: cor approx consensus}, for the case that $\beta \sqrt{\gamma} > \max \{ \xi, \lambda \}$, the distance converges faster as the constant step-size $\alpha$ increases.

To analyze the expression of $D^*$, we simplify the expression as follows. Let $\text{dom}_{D^*} = \Big\{ (\kappa, \gamma, \tilalph) \in \R^3 : 
\kappa \in [1, \infty), \gamma \in [0, 1) \; \text{and} \; \tilalph \in \big( 0, \frac{1}{\kappa} \big], \; \text{or} \;
\kappa \in [1, \infty), \gamma \in \big[ 1, \frac{\kappa}{\kappa - 1} \big) \; \text{and} \;$ $\tilalph \in \Big( 1 - \frac{1}{\gamma}, \frac{1}{\kappa} \Big] \Big\}$.
Using changes of variables $\kappa = \frac{\tilL}{\tilmu}$ and $\tilalph = \alpha \tilmu$ (as in subsection~\ref{subsec: convergence}) and then normalizing the expression by $\frac{\rho \rc \sqrt{d}}{1 - \lambda}$, we have the (normalized) approximate consensus diameter $\Dnorm: \text{dom}_{D^*} \to \R_{\geq 0}$ defined as
\begin{equation}
    \Dnorm (\kappa, \gamma, \tilalph)
    = \kappa \tilalph \bigg( 1 + \frac{\sqrt{\kappa \gamma \tilalph}}{1 - \sqrt{\gamma} \cdot \sqrt{1 - \tilalph}} \bigg)
    = \kappa \tilalph (1 + \sqrt{\kappa \gamma} \cdot \Rnorm),
    \label{def: norm-cons}
\end{equation}
where $\Rnorm$ is the (normalized) convergence radius defined in \eqref{def: norm-conv}.
It can be noted that $\Dnorm$ is strictly increasing with both $\kappa$ and $\gamma$. However, $\Dnorm$ is neither an increasing nor decreasing function with respect to $\tilalph$. The plots between the (normalized) approximate consensus diameter $\Dnorm$ and the (scaled) constant step-size $\tilalph$ for some values of $\kappa$ and $\gamma$ are given in Figures~\ref{fig: sigma small} to \ref{fig: sigma large}.
The plots suggest that for $\gamma \leq 1$, small constant step-sizes $\alpha$ provide small approximate consensus diameters $D^*$ while large constant step-sizes $\alpha$ may be preferable in the case that $\gamma > 1$. 

\begin{figure}%[htp]
\centering
\subfloat[$\kappa = 1.5$.]{\includegraphics[width=.32\textwidth]{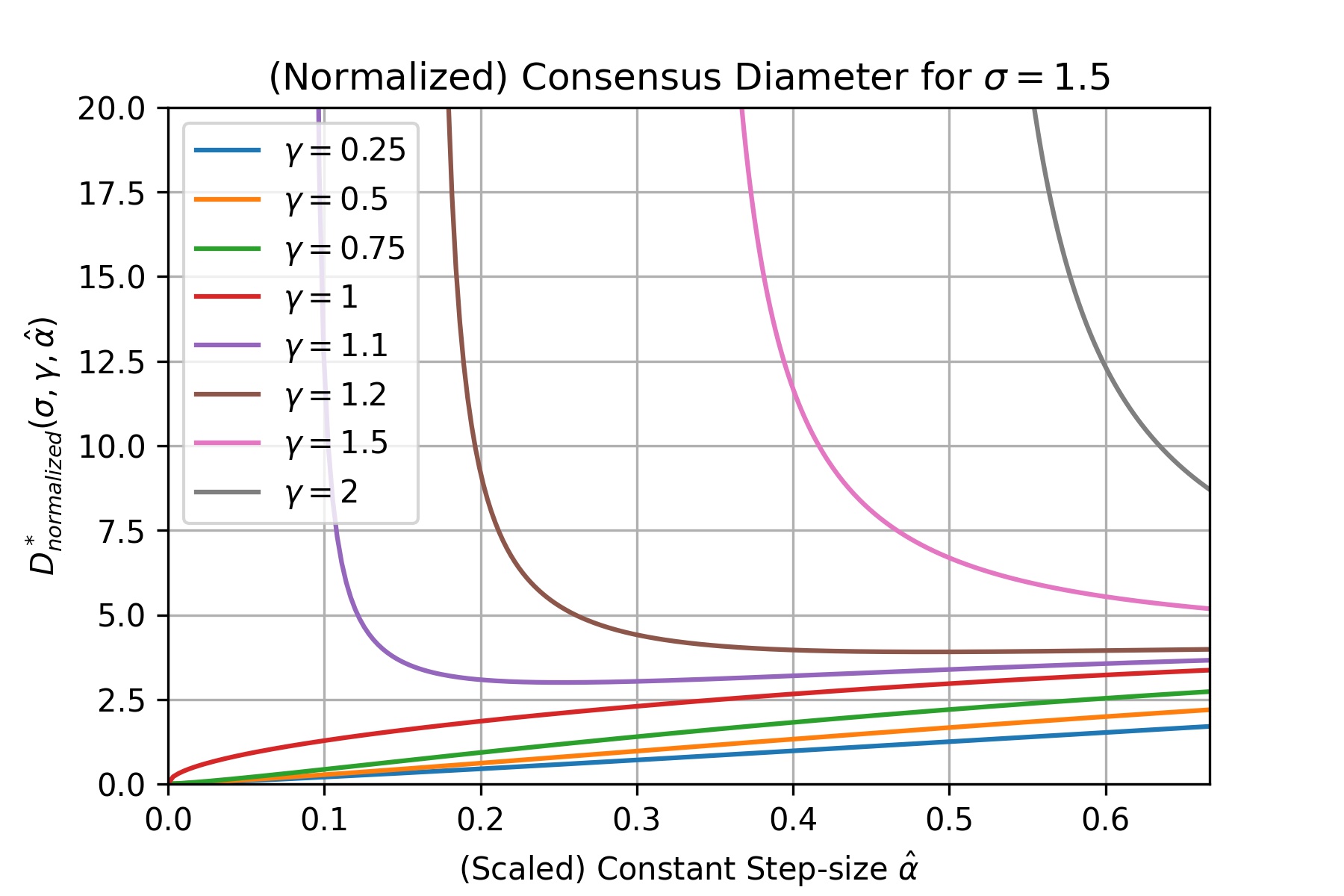} \label{fig: sigma small}}
\subfloat[$\kappa = 2$.]{\includegraphics[width=.32\textwidth]{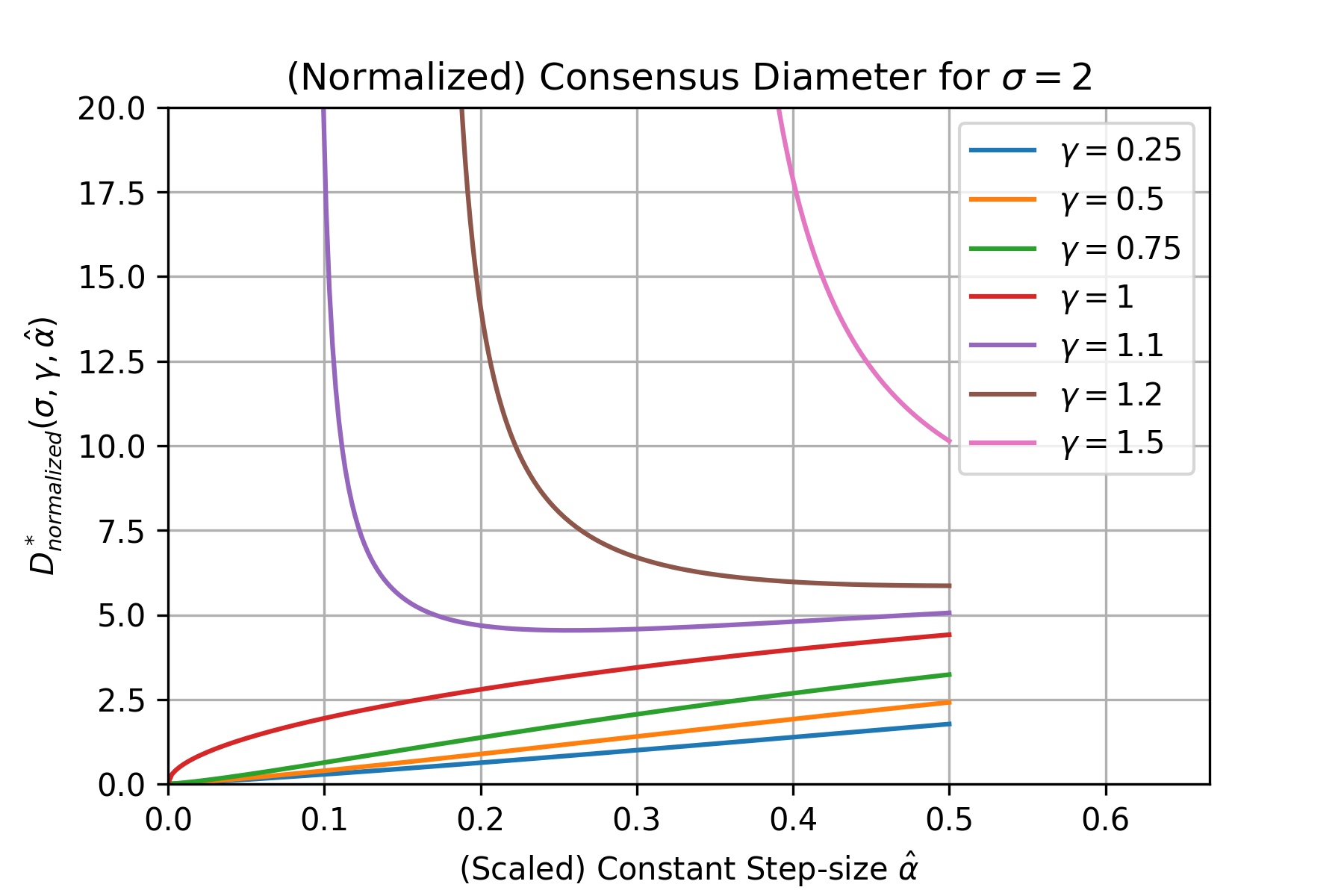} \label{fig: sigma medium}}
\subfloat[$\kappa = 3$.]{\includegraphics[width=.32\textwidth]{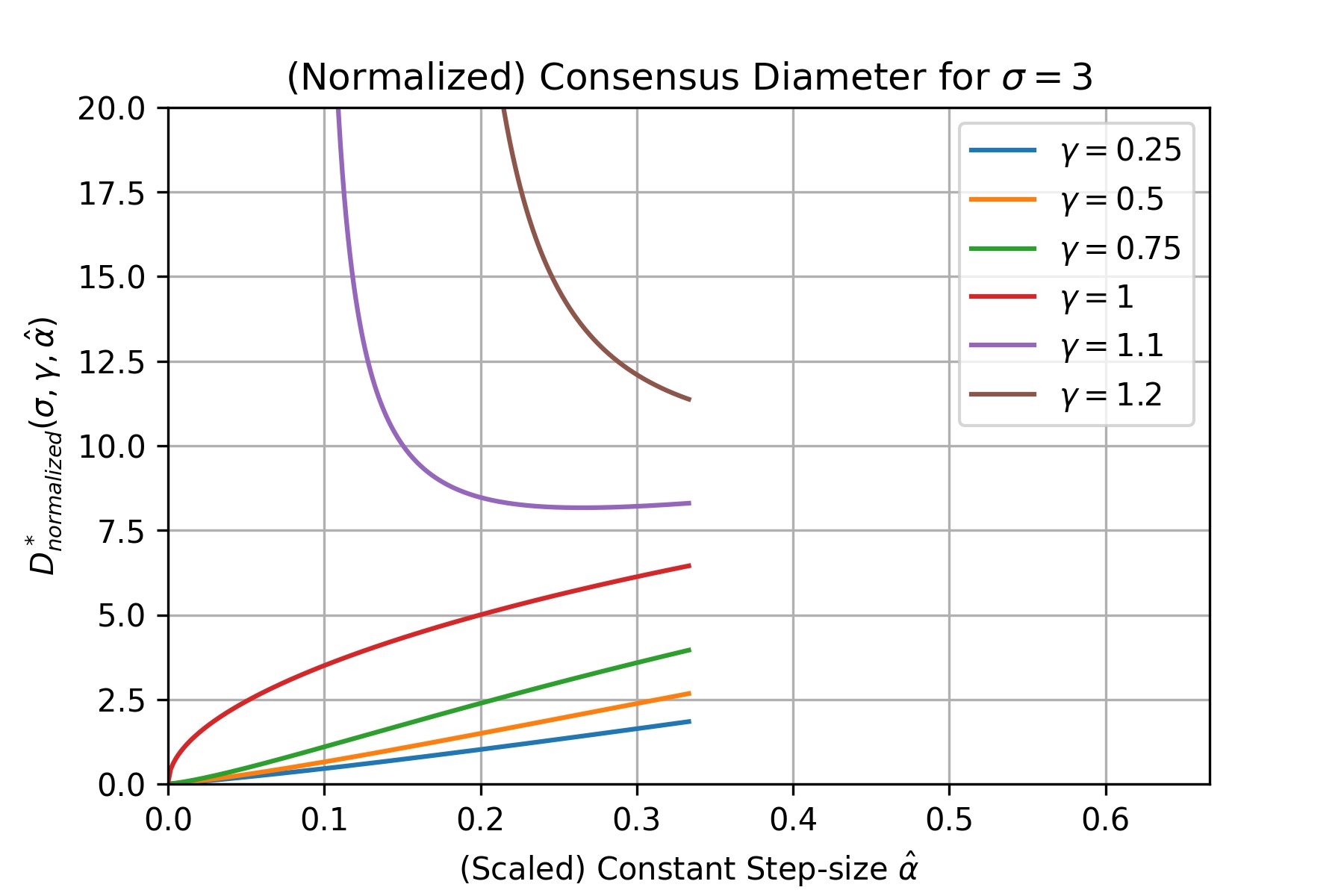} \label{fig: sigma large}}
\caption{The (normalized) approximate consensus diameter $\Dnorm$ for different values of the contraction factor $\gamma$ and for legitimate values of the (scaled) constant step-size $\tilalph$.}
\label{fig: analysis_cons-diam}
\end{figure}

Additionally, by applying the approximation of $R^*$ from subsection~\ref{subsec: convergence} to \eqref{def: norm-cons}, we obtain insights regarding the dependence of the approximate consensus diameter $D^*$ on the (scaled) constant step-size $\tilalph$ as shown below.
\begin{itemize}
    \item When $\gamma \in [0, 1 )$ and $\tilalph$ approaches $0^+$, we have that $D^* \approx \frac{\rho \rc \kappa \sqrt{d}}{1 - \lambda} \cdot \tilalph$.
    \item When $\gamma = 1$ and $\tilalph$ approaches $0^+$, we have that $D^* \approx \frac{2 \rho \rc \kappa^{\frac{3}{2}} \sqrt{d}}{1 - \lambda} \cdot \tilalph^\frac{1}{2}$.
    \item When $\gamma \in (1, \infty)$ and $\tilalph$ approaches $\big( 1 - \frac{1}{\gamma} \big)^+$, we have that $D^* \approx \frac{2 \rho \rc}{1 - \lambda} \sqrt{\frac{d}{\gamma}} \cdot \Big( \kappa \big( 1 - \frac{1}{\gamma} \big) \Big)^{\frac{3}{2}} \cdot \hat{\alpha}^{-1}$, where $\hat{\alpha} = \tilalph - \big( 1 - \frac{1}{\gamma} \big)$.
\end{itemize}

\begin{remark}
In fact, the states contraction property (Definition~\ref{def: contraction}) implicitly relates to network connectivity, where connectivity assures the states' contraction property for non-faulty cases. In such cases, network connectivity conditions suffice to achieve both convergence and consensus, as evidenced in \cite{yuan2016convergence} and \cite{nedic2017achieving}. However, in Byzantine scenarios, the states' contraction depends not only on connectivity but also on filter properties specific to each resilient algorithm. Our work introduces the states contraction property (Definition~\ref{def: contraction}) to capture the desired characteristics of resilient algorithm filters, while the mixing dynamics property (Definition~\ref{def: mixing}) aims to capture network connectivity (associated with the equivalent dynamics of regular agents). However, Theorem~\ref{thm: consensus} presents a set of assumptions that lead to both the convergence and consensus results (even though the convergence result does not require the repeatedly jointly rooted assumption).
\end{remark}

\subsection{Implications for Existing Resilient Distributed Optimization Algorithms}
\label{subsec: algorithms}

We now describe the implication of our above results (for our general framework) for the specific existing algorithms discussed in subsection~\ref{subsec: existing algs}: SDMMFD \cite{kuwaran2020byzantine_ACC, kuwaran2024scalable}, SDFD \cite{kuwaran2024scalable}, CWTM \cite{sundaram2018distributed, su2015byzantine, su2020byzantine, fu2021resilient, zhao2019resilient, fang2022bridge}, and RVO \cite{park2017fault, abbas2022resilient}.
In particular, we show that the algorithms satisfy the states contraction (Definition~\ref{def: contraction}) and the mixing dynamics (Definition~\ref{def: mixing}) properties with different quantities which are determined in the following theorem.

Before stating the theorem, recall that $d \in \Z_+$ is the number of dimensions of the optimization variable $\x$ in \eqref{prob: regular node}. For SDMMFD and SDFD, let $\y[\infty] \in \R^d$ be the point such that $\lim_{k \to \infty} \y_i [k] = \y[\infty]$ for all $v_i \in \vr$. Such a point exists due to \cite[Proposition~1]{kuwaran2024scalable}.

Additionally, recall that $F \in \Z_+$ is the parameter in the $F$-local model (Assumption~\ref{asm: robust}). Since the step in RVO depends on a specific implemented algorithm, we assume that there exists a function $p: \Z_+ \times \Z_+ \to \Z_+$ such that if the graph $\G$ is $p(d, F)$-robust then the step in RVO returns a non-empty set of states.

\begin{theorem}  \label{thm: contraction}
Suppose Assumptions~\ref{asm: convex}-\ref{asm: weight matrices} hold, $\alpha_k = \alpha$ for all $k \in \N$, and $\rc$ and $\beta$ are defined in \eqref{def: x_c distance} and \eqref{def: beta}, respectively. Let $\{ 0[k] \} = \{ 0 \}_{k \in \N}$.
\begin{itemize}
    \item If $\G$ is $((2 d + 1)F + 1)$-robust then there exists $c_1, c_2 \in \R_{\geq 0}$ such that the SDMMFD from \cite{kuwaran2020byzantine_ACC, kuwaran2024scalable} satisfies the $(\y [\infty], 1, \{ 2c_1 e^{-c_2k} \})$-states contraction property and there exists $\{ \W^{(\ell)} [k] \}_{k \in \N, \ell \in [d]} \subset \S^{| \vr |}$ such that the algorithm satisfies the $(\{ \W^{(\ell)} [k] \}, G)$-mixing dynamics property with $G = \rc \tilL \Big( 1 + \frac{\sqrt{\alpha \tilL}}{1 - \beta} \Big)$.
    
    \item If $\G$ is $(2 F + 1)$-robust then there exists $c_1, c_2 \in \R_{\geq 0}$ such that the SDFD from \cite{kuwaran2024scalable} satisfies the $(\y [\infty], 1, \{ 2c_1 e^{-c_2k} \})$-states contraction property.
    
    \item If $\G$ is $(2 F + 1)$-robust then the CWTM from \cite{sundaram2018distributed, su2015byzantine, su2020byzantine, fu2021resilient, zhao2019resilient, fang2022bridge} satisfies the $(\c^*, d, \{ 0[k] \})$-states contraction property and there exists $\{ \W^{(\ell)} [k] \}_{k \in \N, \ell \in [d]}$ $\subset \S^{| \vr |}$ such that the algorithm satisfies the $(\{ \W^{(\ell)} [k] \}, G)$-mixing dynamics property with $G = \rc \tilL \Big( 1 + \frac{\sqrt{\alpha d \tilL}}{1 - \beta \sqrt{d}} \Big)$.
    
    \item If $\G$ is $p(d, F)$-robust then the RVO from \cite{park2017fault, abbas2022resilient} satisfies the $(\c^*, 1, \{ 0[k] \})$-states contraction property and there exists $\{ \W^{(\ell)} [k] \}_{k \in \N, \ell \in [d]} \subset \S^{| \vr |}$ such that the algorithm satisfies the $(\{ \W^{(\ell)} [k] \}, G)$-mixing dynamics property with $G = \rc \tilL \Big( 1 + \frac{\sqrt{\alpha \tilL}}{1 - \beta} \Big)$.
\end{itemize}
\end{theorem}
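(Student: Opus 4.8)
The plan is to verify the three asserted properties---states contraction (Definition~\ref{def: contraction}), mixing dynamics (Definition~\ref{def: mixing}), and the explicit gradient bound $G$---separately, organizing the four algorithms by the filter they use. I would dispose of the gradient bound first, since it follows \emph{uniformly} from the states contraction property (once established below) together with Proposition~\ref{prop: convergence}. Because $\x_i^*$ minimizes $f_i$ we have $\nabla f_i(\x_i^*) = \boldsymbol{0}$, so the $L_i$-Lipschitz gradient assumption gives $\|\g_i[k]\| = \|\nabla f_i(\tilde{\x}_i[k]) - \nabla f_i(\x_i^*)\| \leq \tilL\|\tilde{\x}_i[k] - \x_i^*\|$. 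Combining the triangle inequality $\|\tilde{\x}_i[k] - \x_i^*\| \leq \|\tilde{\x}_i[k] - \xc\| + \rc$ with the $(\xc,\gamma,\{c[k]\})$-contraction bound and the limit $\limsup_k \max_{v_j\in\vr}\|\x_j[k]-\xc\| \leq R^*$ from Proposition~\ref{prop: convergence}, and using $c[k]\to 0$, I obtain $\limsup_k\|\g_i[k]\| \leq \tilL(\sqrt{\gamma}R^* + \rc) = \rc\tilL\big(1 + \frac{\sqrt{\alpha\gamma\tilL}}{1-\beta\sqrt{\gamma}}\big)$. Substituting $\gamma=1$ (SDMMFD, RVO) and $\gamma=d$ (CWTM), and using $\|\cdot\|_\infty \leq \|\cdot\|$, recovers each stated value of $G$.

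Next I would establish states contraction for the two algorithms whose filter output is manifestly a convex combination of regular states. For RVO, the safe\_point operation writes $\tilde{\x}_i[k]$ as the convex combination in \eqref{eqn: convex comb} over $v_j \in \V_i[k]\cap\vr$; subtracting $\c^*$, applying the triangle inequality, and using convexity of the norm yields $\|\tilde{\x}_i[k]-\c^*\| \leq \max_{v_j\in\vr}\|\x_j[k]-\c^*\|$, i.e. the $(\c^*,1,\{0[k]\})$ property, with $p(d,F)$-robustness guaranteeing the safe point exists. For CWTM the argument is coordinate-wise and rests on the standard trimmed-mean lemma: under $(2F+1)$-robustness and the $F$-local model, each filtered scalar $\tilde{x}_i^{(\ell)}[k]$ lies in the interval $[\min_{v_j\in\vr}x_j^{(\ell)}[k],\,\max_{v_j\in\vr}x_j^{(\ell)}[k]]$, so $|\tilde{x}_i^{(\ell)}[k]-(c^*)^{(\ell)}| \leq \max_{v_j\in\vr}\|\x_j[k]-\c^*\|$. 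Squaring, summing over the $d$ coordinates, and taking square roots then produces the factor $\sqrt{d}$, i.e. $\gamma=d$, giving the $(\c^*,d,\{0[k]\})$ property.

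The main obstacle is the SDMMFD/SDFD contraction, because there the filter is centered at the time-varying auxiliary state $\y_i[k]$ rather than at the fixed contraction center $\y[\infty]$. I would first invoke \cite[Proposition~1]{kuwaran2024scalable} for both the existence of $\y[\infty]$ and an exponential consensus estimate $\max_{v_i\in\vr}\|\y_i[k]-\y[\infty]\| \leq c_1 e^{-c_2 k}$, which will supply the perturbation sequence. The dist\_filt step discards the $F$ states farthest from $\y_i[k]$, and for SDMMFD the subsequent full\_mm\_filt trims coordinate-wise extremes; under $(2F+1)$-robustness (SDFD) and $((2d+1)F+1)$-robustness (SDMMFD) together with the $F$-local model, these filters leave $\tilde{\x}_i[k]$ non-expansive about the moving center, $\|\tilde{\x}_i[k]-\y_i[k]\| \leq \max_{v_j\in\vr}\|\x_j[k]-\y_i[k]\|$. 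Re-centering with two triangle inequalities then gives $\|\tilde{\x}_i[k]-\y[\infty]\| \leq \max_{v_j\in\vr}\|\x_j[k]-\y[\infty]\| + 2\max_{v_i\in\vr}\|\y_i[k]-\y[\infty]\|$, which is precisely the $(\y[\infty],1,\{2c_1 e^{-c_2 k}\})$ contraction; the factor $2$ and the exponential decay are exactly what this re-centering produces, and $\lim_k c[k]=0$ is immediate.

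Finally, for the mixing dynamics I would build the row-stochastic matrices $\W^{(\ell)}[k]$ by writing each filtered coordinate $\tilde{x}_i^{(\ell)}[k]$ as a convex combination of the regular agents' $\ell$-th coordinates; this is possible because every retained Byzantine value can be rewritten as a convex combination of the two bracketing regular values (the same trimmed-mean lemma used for CWTM), and the positive weights are lower bounded through Assumption~\ref{asm: weight matrices}. The gradient update \eqref{eqn: gradient step} then takes the form \eqref{def: perturbed mixing} with $\g^{(\ell)}[k]$ the stacked gradients and $G$ as computed above. It remains to show $\{\mathbb{G}(\W^{(\ell)}[k])\}_k$ is repeatedly jointly rooted: this I would derive from the robustness of $\G$, which forces the effective regular-agent interaction graph to have, in every proper subset, a node that draws on a value from outside the subset, so that a bounded-length composition is rooted---the standard passage from $r$-robustness to joint rootedness in resilient consensus.
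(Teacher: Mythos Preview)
Your proposal is correct and mirrors the paper's proof almost exactly: the paper likewise invokes \cite[Lemma~2]{kuwaran2024scalable} and \cite[Proposition~1]{kuwaran2024scalable} for the SDMMFD/SDFD re-centering argument, \cite[Proposition~5.1]{sundaram2018distributed} for the CWTM coordinate-wise convex-combination representation, the direct convex-combination bound for RVO, and then derives each $G$ by substituting the corresponding $\gamma$ into the gradient bound obtained from the contraction property (the paper packages this last step as Lemma~\ref{lem: grad bound}). The one place where the paper is more concrete than your sketch is the rootedness argument: it explicitly counts that each regular agent discards at most $(2d+1)F$ incoming edges (SDMMFD), $2F$ (CWTM), or $F$ (RVO), and then applies \cite[Lemma~2.3]{sundaram2018distributed} to conclude that the induced regular-agent graph $\mathbb{G}(\W^{(\ell)}[k])$ is rooted at every time step, which is what makes the robustness parameters $(2d+1)F+1$, $2F+1$, and $p(d,F)$ exactly the right thresholds.
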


We provide a summary of the proof for Theorem~\ref{thm: contraction} here, with the full proof available in Appendix \ref{subsec: proof of algorithms}. To establish the states contraction property for each algorithm, we proceed as follows:
\begin{itemize}
    \item For SDMMFD and SDFD, we combine the results from \cite[Proposition~1]{kuwaran2024scalable} and \cite[Lemma~2]{kuwaran2024scalable}.
    \item For CWTM, we apply the results of \cite[Proposition~5.1]{sundaram2018distributed}.
    \item For RVO, we manipulate the equation \eqref{eqn: convex comb} to derive the required result.
\end{itemize}

To demonstrate the mixing dynamics property for each algorithm (specifically for SDMMFD, CWTM, and RVO), we proceed as follows: 
\begin{itemize}
    \item For both SDMMFD and CWTM, we rewrite the dynamics of the regular agents as in \eqref{def: perturbed mixing}, utilizing \cite[Theorem~6.1]{sundaram2018distributed}.
    \item For RVO, we similarly rewrite the dynamics of the regular agents using \eqref{eqn: convex comb}.
\end{itemize}

Next, we employ \cite[Lemma~2.3]{sundaram2018distributed}, given the robustness conditions, to establish repeated jointly rootedness for the corresponding graph sequences. Finally, we determine the constant $G$ in Definition~\ref{def: mixing} for each case by substituting the corresponding contraction factor $\gamma$ into \eqref{eqn: limsup grad bound} (from Lemma~\ref{lem: grad bound}).

Now, we consider bounding the distance $\rc$, as defined in \eqref{def: x_c distance}. It is worth noting that the constant $\rc$ appears in two important quantities: the convergence radius $R^*$ and approximate consensus diameter $D^*$ defined in \eqref{eqn: conv thm limit} and \eqref{eqn: cor lim approx consensus}, respectively. In fact, $\rc$ can be upper bounded by a quantity depending on the diameter of the minimizers of the regular agents' functions $r^*$ defined in subsection~\ref{subsec: asm}. The formal statement is provided below while the proof is provided in Appendix~\ref{subsec: r_c bound}.

\begin{lemma}  \label{lem: radius-c}
Suppose Assumption~\ref{asm: convex} holds and for the initialization step of SDMMFD and SDFD, there exists $\epsilon^* \in \R_{\geq 0}$ such that $\| \hat{\x}_i^* - \x_i^* \|_\infty \leq \epsilon^*$ for all $v_i \in \vr$.
\begin{itemize}
    \item For SDMMFD and SDFD, we have $\rc \leq \sqrt{d} (r^* + \epsilon^*) + r^*$.
    \item For CWTM and RVO, we have $\rc \leq r^*$.
\end{itemize}
\end{lemma}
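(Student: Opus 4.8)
The plan is to split the argument according to the two distinct contraction centers identified in Theorem~\ref{thm: contraction} and handle them separately, since the definition $\rc = \max_{v_i \in \vr} \| \xc - \x_i^* \|$ depends entirely on which point $\xc$ is. For \textbf{CWTM} and \textbf{RVO} the contraction center is $\xc = \c^*$, so the bound is immediate: by the definition \eqref{def: x_c distance} together with the fact that $\x_i^* \in \B(\c^*, r^*)$ for all $v_i \in \vr$ (chosen in subsection~\ref{subsec: asm}), we get $\rc = \max_{v_i \in \vr} \| \c^* - \x_i^* \| \leq r^*$. No further work is needed here.

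The substantive case is \textbf{SDMMFD} and \textbf{SDFD}, whose contraction center is $\xc = \y[\infty]$, the common limit of the auxiliary vectors. The first step is to locate $\y[\infty]$ relative to the regular agents' initial auxiliary values. In both algorithms the auxiliary variable evolves by a pure resilient coordinate-wise trimmed-mean consensus process with no gradient perturbation, i.e. $\y_i[k+1] = \Tilde{\y}_i[k]$ where $\Tilde{\y}_i[k]$ is produced by \texttt{cw\_mm\_filt} followed by \texttt{cw\_average}. The safety property underlying such filters (the same analysis that guarantees $\y[\infty]$ exists, per \cite[Proposition~1]{kuwaran2024scalable}) gives, in each coordinate $\ell \in [d]$, that $\min_{v_i \in \vr} y_i^{(\ell)}[k]$ is non-decreasing and $\max_{v_i \in \vr} y_i^{(\ell)}[k]$ is non-increasing under the $F$-local and robustness assumptions. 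Consequently the limit satisfies $\min_{v_i \in \vr} y_i^{(\ell)}[0] \leq y^{(\ell)}[\infty] \leq \max_{v_i \in \vr} y_i^{(\ell)}[0]$ for every $\ell$, where $y_i^{(\ell)}[0] = \hat{x}_i^{*,(\ell)}$ is the $\ell$-th component of the approximate minimizer set in the initialization step.

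Next I would propagate the two hypotheses through this containment coordinate by coordinate. From $\| \hat{\x}_i^* - \x_i^* \|_\infty \leq \epsilon^*$ we obtain $| \hat{x}_i^{*,(\ell)} - x_i^{*,(\ell)} | \leq \epsilon^*$, and from $\| \x_i^* - \c^* \| \leq r^*$ we obtain $| x_i^{*,(\ell)} - c^{*,(\ell)} | \leq r^*$ (the per-coordinate deviation is dominated by the Euclidean norm). Adding these gives $| \hat{x}_i^{*,(\ell)} - c^{*,(\ell)} | \leq r^* + \epsilon^*$ for every regular agent and every coordinate, so the initial interval $[\min_i \hat{x}_i^{*,(\ell)}, \max_i \hat{x}_i^{*,(\ell)}]$ sits inside $[c^{*,(\ell)} - (r^* + \epsilon^*),\, c^{*,(\ell)} + (r^* + \epsilon^*)]$. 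The containment of the previous step then forces $| y^{(\ell)}[\infty] - c^{*,(\ell)} | \leq r^* + \epsilon^*$ for all $\ell$, and summing the squared coordinate bounds yields $\| \y[\infty] - \c^* \| \leq \sqrt{d}\,(r^* + \epsilon^*)$. A final triangle inequality $\| \y[\infty] - \x_j^* \| \leq \| \y[\infty] - \c^* \| + \| \c^* - \x_j^* \| \leq \sqrt{d}\,(r^* + \epsilon^*) + r^*$, followed by maximization over $v_j \in \vr$, delivers the stated bound.

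The main obstacle is the first step for SDMMFD/SDFD: rigorously justifying that $\y[\infty]$ lies in the per-coordinate hull of the regular agents' initial auxiliary values. This is exactly where the resilient-consensus safety guarantee — the coordinate-wise min/max invariant surviving the removal of up to $F$ extreme entries under the robustness and $F$-local assumptions — must be invoked, and it is the only genuinely non-elementary ingredient. I expect it to follow directly from the safety analysis already established in \cite[Proposition~1]{kuwaran2024scalable}, after which everything reduces to the routine norm manipulation sketched above.
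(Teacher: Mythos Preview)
Your proposal is correct and follows essentially the same approach as the paper's proof: for CWTM/RVO the bound is immediate from $\xc=\c^*$, and for SDMMFD/SDFD the paper likewise invokes the coordinate-wise containment $y^{(\ell)}[\infty]\in[\min_i \hat{x}_i^{*(\ell)},\max_i \hat{x}_i^{*(\ell)}]$ (stated as \eqref{eqn: asymptotic aux} in Lemma~\ref{lem: aux convergence}, i.e.\ \cite[Proposition~1]{kuwaran2024scalable}), then combines $|\hat{x}_i^{*(\ell)}-x_i^{*(\ell)}|\leq\epsilon^*$ and $|x_i^{*(\ell)}-c^{*(\ell)}|\leq r^*$ to get $\|\y[\infty]-\c^*\|\leq\sqrt{d}(r^*+\epsilon^*)$, finishing with the same triangle inequality. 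The only cosmetic difference is that the paper picks the index $i'$ achieving the coordinate maximum before decomposing, whereas you bound all $|\hat{x}_i^{*,(\ell)}-c^{*,(\ell)}|$ uniformly and then use interval containment; these are equivalent.
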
 

Applying the lemma to \eqref{eqn: conv thm limit}, we can conclude that the convergence radius $R^*$ is $\O (\sqrt{d} r^*)$ for SDMMFD and SDFD, and $\O (r^*)$ for CWTM and RVO. Even though \cite{sundaram2018distributed, kuwaran2024scalable} consider different set of assumptions, they also obtain the linear dependency on $r^*$. To achieve a convergence radius of $o (r^*)$, additional assumptions are required, as evident from \cite{yang2019byrdie, fang2022bridge, gupta2021byzantine}. We conjecture that $\O (r^*)$ may be an optimal characteristic, applicable to both convex cases (under bounded gradient assumptions) and strongly convex cases (under Lipschitz gradient assumptions). This assertion arises from the inherent ambiguity in determining the minimizer of the collective sum of all local functions, as discussed in \cite{kuwaran2018location, kuwaran2020set, kuwaran2023minimizer, zamani2024set}. Still, the question regarding a tight lower bound on the convergence radius for the general case (whether how it depends on $r^*$, $\tilmu$, and $\tilL$) remains an open problem.

\begin{remark}  \label{rem: another-algorithms}
It is worth noting that the algorithms proposed in \cite{ben2015robust} and \cite{peng2021byzantine} do not satisfy the states contraction property (Definition~\ref{def: contraction}). However, in fact, they satisfy inequality \eqref{eqn: contraction def} with the perturbation term being bounded by a constant, and thus it is not difficult to use our techniques to show that they geometrically converge to a region with the contraction center $\xc$ but the region has the radius greater than $R^*$ given in \eqref{eqn: conv thm limit}. On the other hand, the algorithms in \cite{guo2021byzantine, elkordy2022basil} do not satisfy the contraction property and may require other techniques to establish convergence (if possible).
\end{remark}

Having proved the states contraction and mixing dynamics properties of the algorithms from \cite{sundaram2018distributed, su2015byzantine, su2020byzantine, fu2021resilient, zhao2019resilient, fang2022bridge, kuwaran2020byzantine_ACC, kuwaran2024scalable, park2017fault, abbas2022resilient}, from Theorem~\ref{thm: main-convergence}, we can deduce that under certain conditions on the graph robustness and step-size $\alpha_k$, the states of the regular agents geometrically converge to the convergence region with $\xc$ and $\gamma$ determined by Theorem~\ref{thm: contraction}. On the other hand, from Theorem~\ref{thm: consensus}, we can deduce that the states of the regular agents geometrically converge together at least until the diameter reaches the approximate consensus diameter $D^*$ (where the parameters $\rc$ and $\gamma$ depend on the implemented algorithm). 

To the best of our knowledge, our work is the first to show the geometric convergence results and characterize the convergence region for the resilient algorithms mentioned above. Thus, our framework, defined properties, and proof techniques provide a general approach for analyzing the convergence region and rate for a wide class of resilient optimization algorithms.
% SECTION: NUMERICAL EXPERIMENT
\section{Numerical Experiments}
\label{sec: experiment}

We now present numerical experiments to illustrate the behavior of the algorithms discussed in subsection~\ref{subsec: existing algs}. Using synthetic quadratic functions, we investigate the geometric convergence properties of these algorithms and analyze how step-size affects both convergence rates and final outcomes.\footnote{Our code is available at \url{https://github.com/kkuwaran/resilient-distributed-optimization}.}

In particular, we focus on quadratic functions with two independent variables as the local cost functions, primarily due to the computational bottleneck posed by RVO. The network comprises 40 agents and is modeled as an 11-robust graph. We implement the $F$-local adversary model, setting $F = 2$. Each Byzantine agent transmits a random vector to its regular neighbors, designed to closely resemble other received vectors, increasing the likelihood of bypassing the filter applied by regular agents. For all algorithms (SDMMFD, SDFD, CWTM, and RVO), a constant step-size of either $\alpha = 0.02$ or $\alpha = 0.04$ is used.

Keeping the network structure, local functions, and Byzantine agent identity consistent, we perform four independent runs of the experiment for each algorithm to account for the stochastic nature of adversary behavior and variability in the initialization of state and auxiliary vectors. The results, reported as the mean and standard deviation of key metrics, are averaged over all runs.

\begin{figure}%[htp]
\centering
\subfloat[The Euclidean distance from the average of the regular agents' states to the true minimizer $\| \Bar{\x} - \x^* \|$ for each algorithm.]
{\includegraphics[width=.45\textwidth]{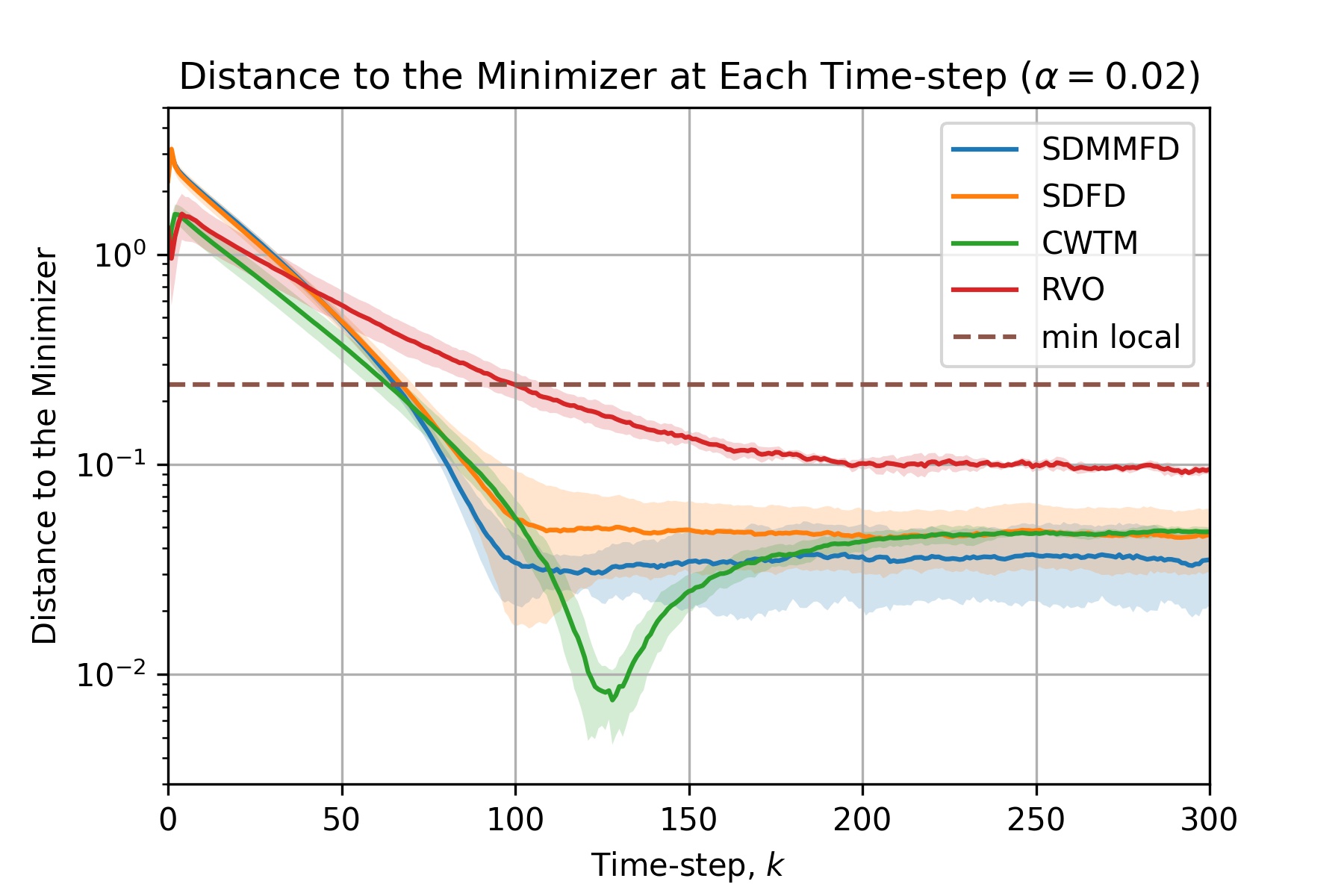}
\includegraphics[width=.45\textwidth]{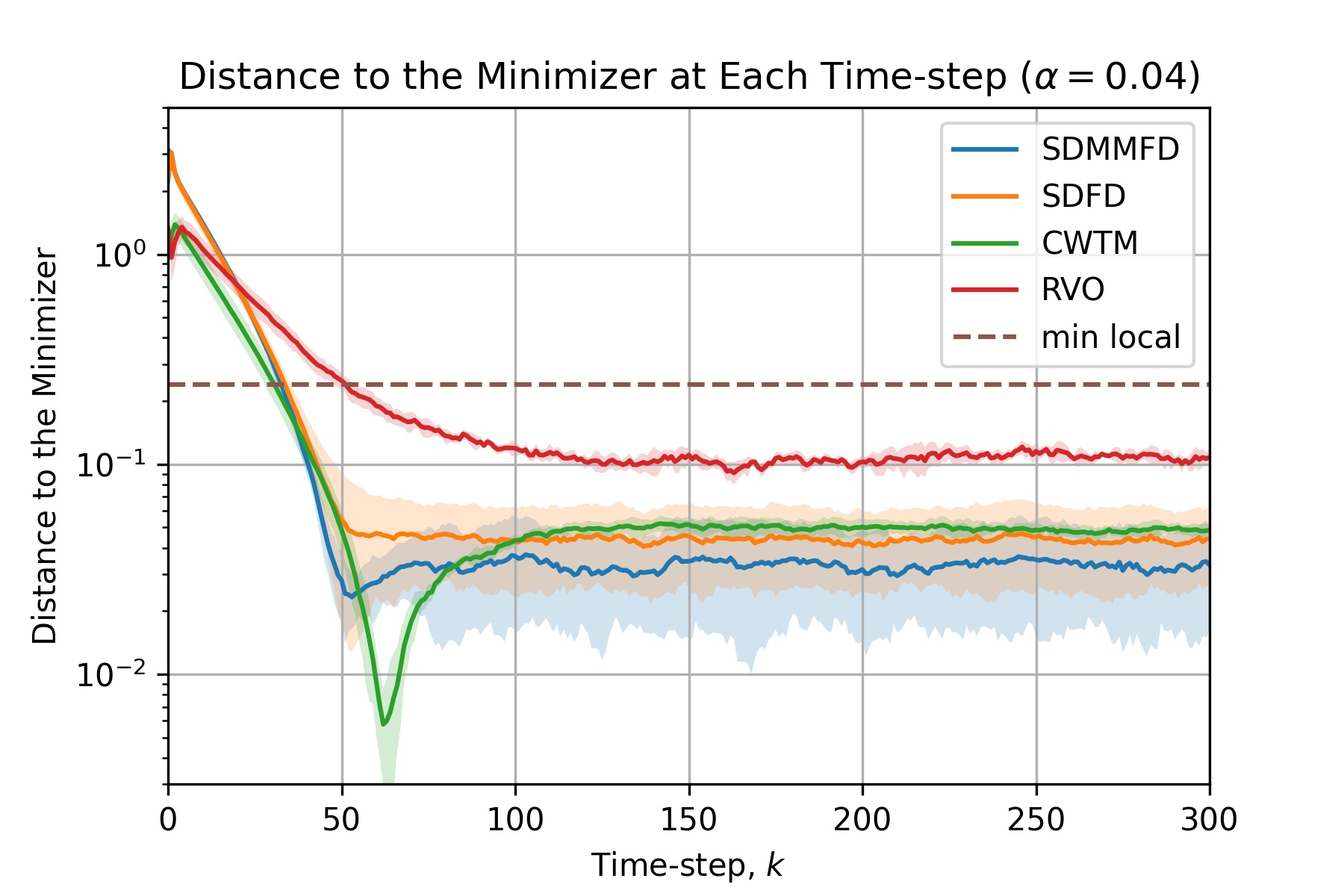} 
\label{fig: distance}}\;\;
\subfloat[The optimality gap evaluated at the average of the regular agents' states $f (\Bar{\x}) - f^*$ for each algorithm.]
{\includegraphics[width=.45\textwidth]{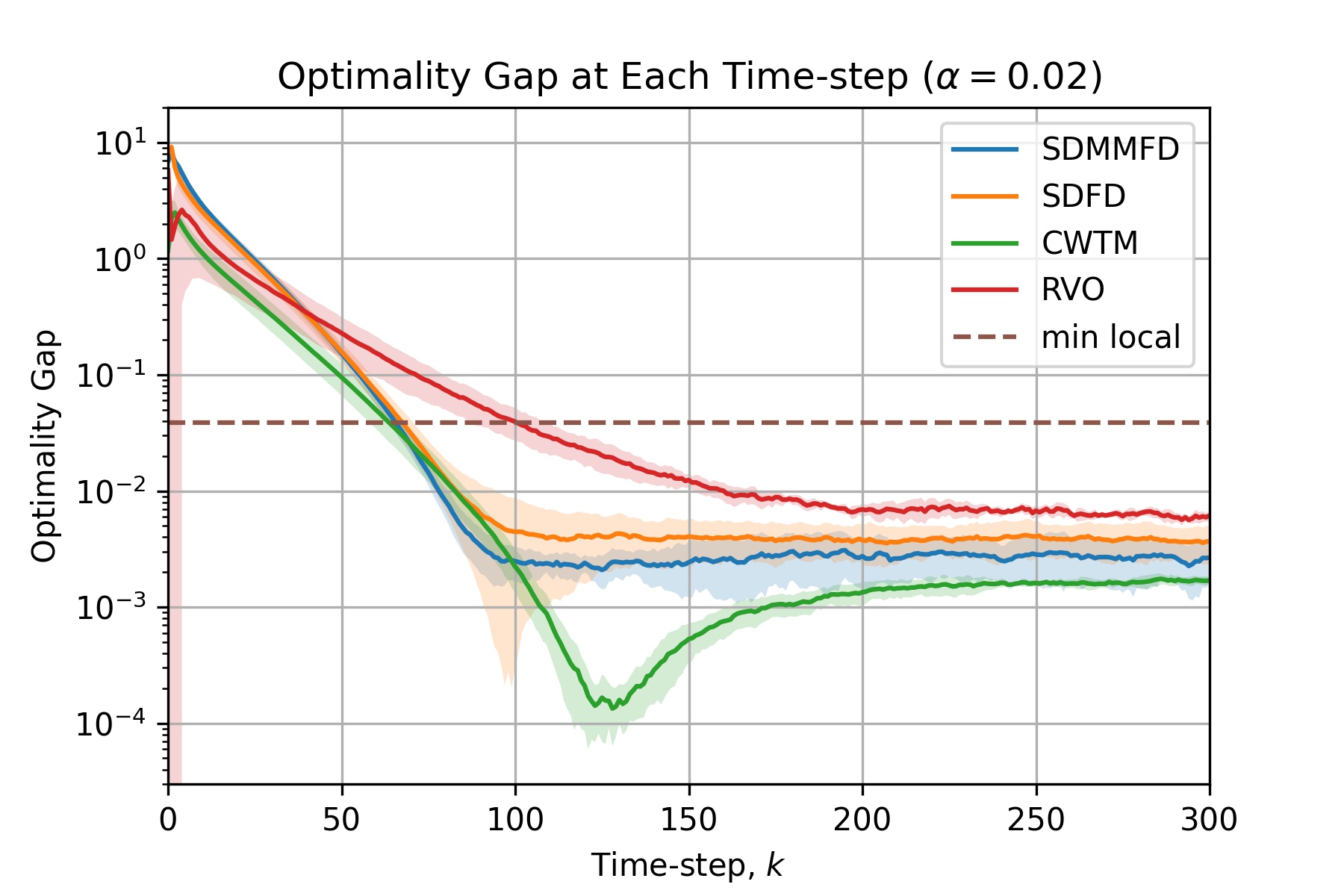}
\includegraphics[width=.45\textwidth]{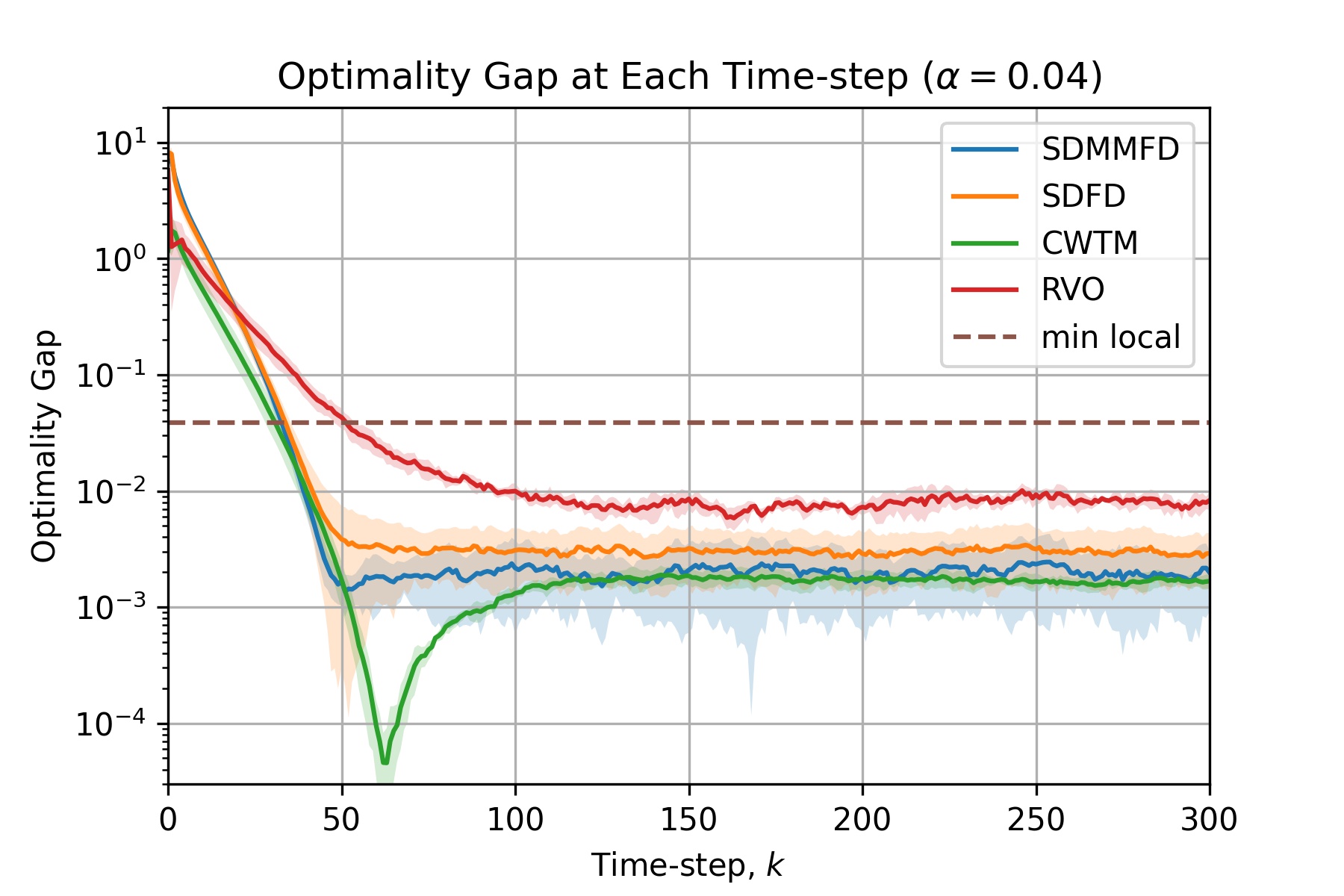}
\label{fig: optimality}}\;\;
\subfloat[The maximum Euclidean distance between two regular agents' states (regular states' diameter) $\max_{v_i, v_j \in \vr} \| \x_i - \x_j \|$ for each algorithm.]
{\includegraphics[width=.45\textwidth]{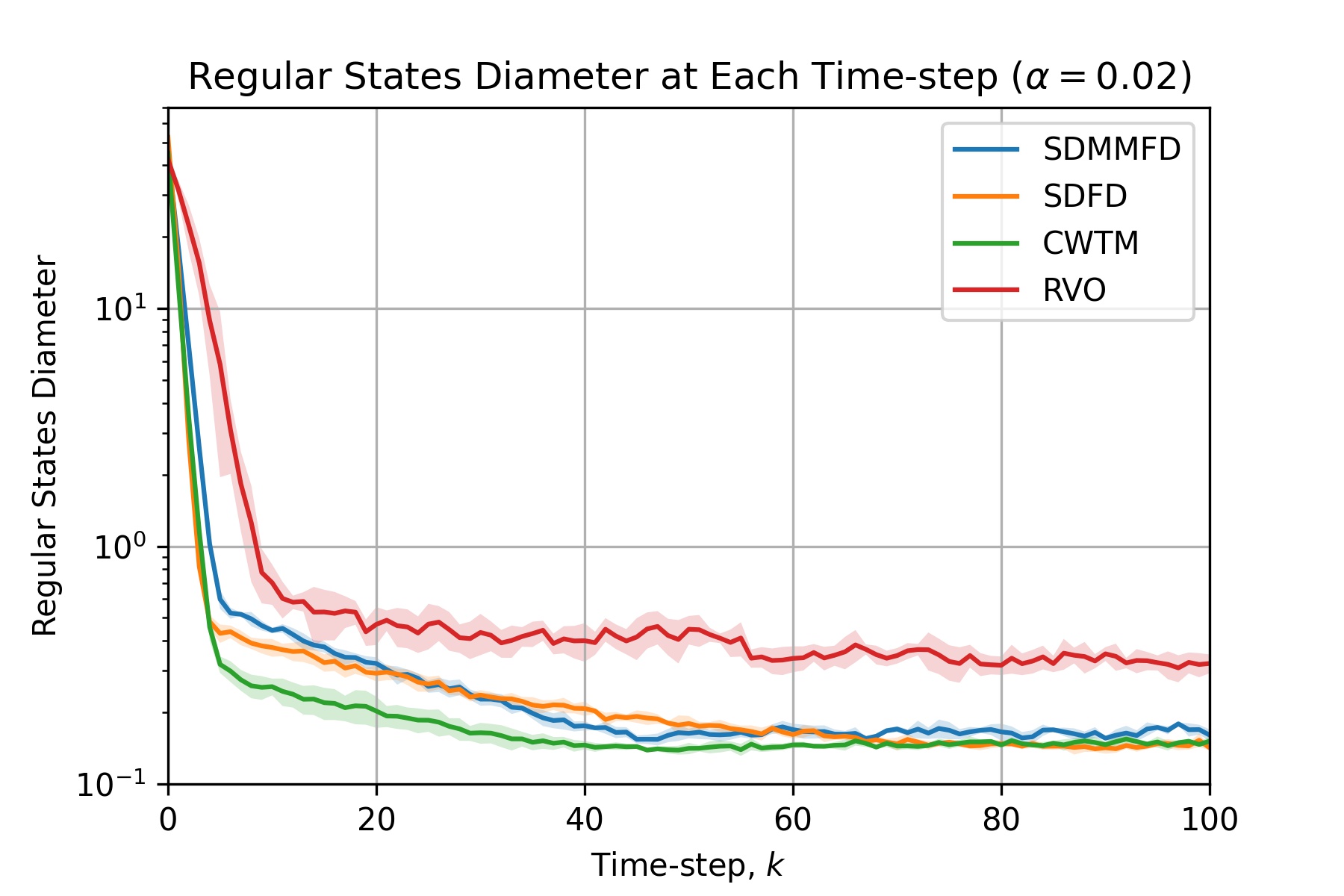}
\includegraphics[width=.45\textwidth]{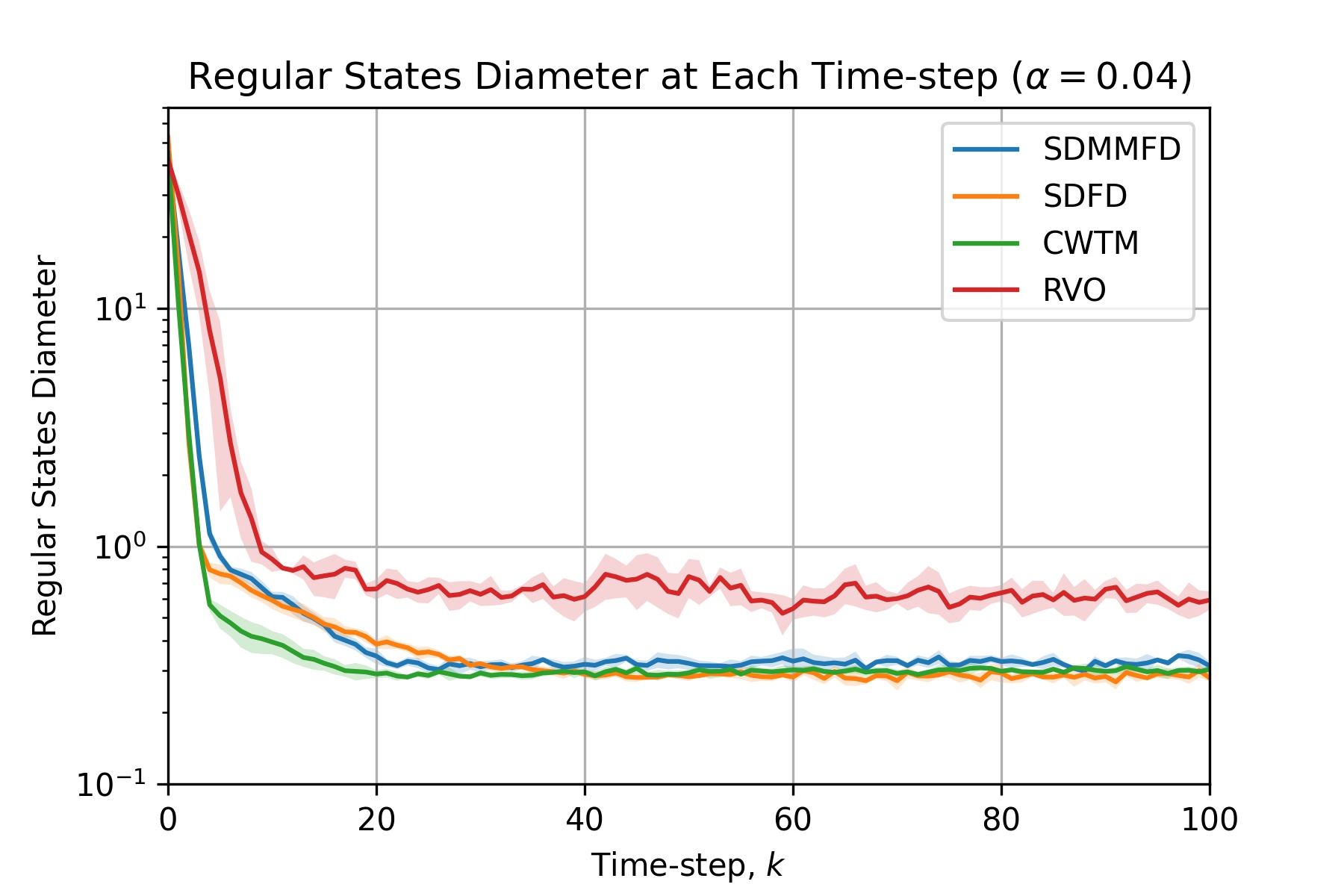}
\label{fig: diameter}}
\caption{The plots show the results obtained from SDMMFD (blue), SDFD (orange), CWTM (green), and RVO (red) for given constant step-sizes $\alpha = 0.02$ (left), and $\alpha = 0.04$ (right).}
\label{fig: exp_quadratic}
\end{figure}

Let $\bar{\x} [k] = \frac{1}{| \vr |} \sum_{v_i \in \vr} \x_i [k]$ be the average of the states over regular agents at time-step $k$. 
In Figure~\ref{fig: distance}, each solid curve represents the Euclidean distance from the average state to the true minimizer, i.e., $\| \bar{\x} [k] - \x^* \|$, while the dotted line labeled min\_local denotes the minimum Euclidean distance from the minimizers of the local functions to the true minimizer across all regular agents, i.e., $\min_{v_i \in \vr} \| \x_i^* - \x^* \|$. 
In Figure~\ref{fig: optimality}, each solid curve corresponds to the optimality gap computed at the average state, i.e., $f ( \bar{\x} [k] ) - f^*$, and the dotted line labeled min\_local indicates the minimum optimality gap computed at the local function minimizers among all regular agents, i.e., $\min_{v_i \in \vr} f( \x_i^* ) - f^*$. 
In Figure~\ref{fig: diameter}, each solid curve represents the maximum Euclidean distance between the states of any two regular agents (regular agents' diameter), i.e., $\max_{v_i, v_j \in \vr} \| \x_i[k] - \x_j[k] \|$. 
In Figures~\ref{fig: distance} to \ref{fig: diameter}, the solid curves are the means over all experiment rounds, and the shaded regions represent a $\pm 1$ standard deviation from the means.

As we can see from Figure~\ref{fig: distance}, for both cases, $\alpha = 0.02$ and $\alpha = 0.04$, the distances to the true minimizer drop at geometric rates in the early time-steps. However, in the later time-steps, these distances remain relatively stable, albeit with slight oscillations of small magnitudes. This behavior corresponds to the regular nodes' states entering the convergence region. Furthermore, it is notable that the convergence rates of all algorithms decrease, indicating faster convergence as the constant step-size $\alpha$ transitions from $0.02$ to $0.04$, which aligns with the predictions from Theorem~\ref{thm: main-convergence}. Still, for each algorithm, the distances to the minimizer at later time-steps are similar for both constant step-sizes. Analogous patterns can be observed in the optimality gaps, mirroring the trends seen in the distance to the minimizer, as illustrated in Figure~\ref{fig: optimality}.

In Figure~\ref{fig: diameter}, representing the diameters of the regular states, it is important to note the shorter time horizon for the time-step $k$. The diameters initially drop at a geometric rate during the early time-steps, followed by relatively stable values. A slightly faster convergence can be observed for higher (constant) step-sizes, with the values ceasing to drop at around $k = 60$ for $\alpha = 0.02$ and $k = 40$ for $\alpha = 0.04$. Additionally, the diameter at later time-steps is evidently higher for larger step-sizes compared to smaller ones. These results align well with the discussion in subsection~\ref{subsec: consensus} regarding approximate consensus, particularly Theorem~\ref{thm: consensus} and Figure~\ref{fig: analysis_cons-diam}.

It is noteworthy that, even though Theorem~\ref{thm: contraction} implies a contraction factor of $\gamma = d$ for CWTM (with $d = 2$ in this case), and $\gamma = 1$ for SDMMFD, SDFD, and RVO, for most of the time-steps in this experiment, the states contraction property \eqref{eqn: contraction def} for all algorithms holds with $\gamma$ approximately less than $1$. The insensitivity of distances to the minimizer at later time-steps with respect to the step-size $\alpha$ is attributed to the flat region observed in Figure~\ref{fig: conv-radius}, corresponding to $\gamma < 1$.  In conclusion, the observed behaviors of all considered algorithms concerning the step-size are comprehensively explained by the theories presented in section~\ref{sec: assumption result}.

From Figure~\ref{fig: distance} and \ref{fig: optimality}, we observe that although some algorithms exhibit worse distances to the minimizer compared to the local minimizers (i.e., comparing the solid curves to the dotted line), all algorithms are likely to achieve better optimality gaps than the local minimizers. Notably, all local minimizers fall within the convergence region $\B (\xc, R^*)$. Additionally, when comparing the algorithms, SDFD shows higher uncertainty regarding distances to the minimizer and optimality gaps due to its vulnerability to hostile state vectors. Interestingly, RVO might have a higher effective contraction factor $\gamma$ for this experiment, resulting in slower convergence and worse final values compared to the others across all metrics, as shown in Figure~\ref{fig: exp_quadratic}.
% SECTION: CONCLUSION
\section{Conclusions} 
\label{sec: conclusion}

In this work, we considered the (peer-to-peer) distributed optimization problem in the presence of Byzantine agents. We introduced a general resilient (peer-to-peer) DGD framework called R{\scriptsize{ED}}G{\scriptsize{RAF}} which includes some state-of-the-art resilient algorithms such as SDMMFD \cite{kuwaran2020byzantine_ACC, kuwaran2024scalable}, SDFD \cite{kuwaran2024scalable}, CWTM \cite{sundaram2018distributed, su2015byzantine, su2020byzantine, fu2021resilient, zhao2019resilient, fang2022bridge}, and RVO \cite{park2017fault, abbas2022resilient} as special cases. We analyzed the convergence of algorithms captured by our framework, assuming they satisfy a certain states contraction property. In particular, we derived a geometric rate of convergence of all regular agents to the convergence region under the strong convexity of the local functions and constant step-size regime. As we have shown, the convergence region, in fact, contains the true minimizer (the minimizer of the sum of the regular agents' functions). In addition, given a mixing dynamics property, we also derived a geometric rate of convergence of all regular agents to approximate consensus with a certain diameter under similar conditions. Considering each resilient algorithm, we analyzed the states contraction and mixing dynamics properties which, in turn, dictate the convergence rates, the size of the convergence region, and the approximate consensus diameter.

Future work includes developing resilient algorithms satisfying both the states contraction and mixing dynamics properties which give fast rates of convergence as well as a small convergence region and small approximate consensus diameter, identifying other properties for resilient algorithms to achieve good performance, analyzing the convergence property of other existing algorithms from the literature, considering non-convex functions with certain properties, and establishing a tight lower bound for the convergence region.

\bibliographystyle{unsrtnat}
\bibliography{reference}

\newpage
\appendix
% SUPPLEMENT: TECHNICAL LEMMAS
\section{Additional Lemmas}

\subsection{Graph Robustness}

We now state a lemma regarding transmissibility of information after dropping some edges from the graph (from Lemma~2.3 in \cite{sundaram2018distributed}).
\begin{lemma}  \label{lem: rooted subgraph2}
Suppose $r \in \Z_+$ and $\G$ is $r$-robust. Let $\G'$ be a graph obtained by removing $r - 1$ or fewer incoming edges from each node in $\G$. Then $\G'$ is rooted.
\end{lemma}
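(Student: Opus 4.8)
The plan is to argue by contraposition, using the structural characterization that a finite directed graph fails to be rooted precisely when its condensation (the acyclic graph of strongly connected components) has at least two \emph{source} components, i.e.\ two disjoint nonempty vertex sets, each receiving no incoming edge from its complement. Once non-rootedness is turned into the existence of two such ``closed'' sets, the $r$-robustness of $\G$ and the edge-removal budget will collide directly.

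First I would record the elementary fact that a finite acyclic directed graph always has a source (follow incoming edges backward until reaching a node of in-degree zero, which terminates by acyclicity), and moreover that every node is reachable from some source. Applying this to the condensation of $\G'$: if the condensation had a \emph{unique} source component $\mathcal{C}$, then every component, and hence every vertex, would be reachable from $\mathcal{C}$, so any vertex of $\mathcal{C}$ would reach all of $\V$ and $\G'$ would be rooted. Contrapositively, if $\G'$ is not rooted then its condensation has two distinct source components $\mathcal{S}_1, \mathcal{S}_2$. These are disjoint and nonempty, and each is a \emph{proper} subset of $\V$ (each omits the other), which is exactly the setting required by the definition of $r$-robustness. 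The defining property of a source component is that, in $\G'$, no vertex inside it has an incoming edge originating outside it.

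Next I would invoke the hypothesis that $\G$ is $r$-robust. Applied to the disjoint nonempty proper subsets $\mathcal{S}_1, \mathcal{S}_2$, the definition guarantees that at least one of them, say $\mathcal{S}_1$, is $r$-reachable \emph{in $\G$}. Thus there is a vertex $v_i \in \mathcal{S}_1$ with $|\mathcal{N}^{\text{in}}_i \setminus \mathcal{S}_1| \geq r$ (with in-neighbors computed in $\G$); that is, $v_i$ has at least $r$ in-neighbors in $\G$ lying outside $\mathcal{S}_1$.

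Finally comes the contradiction. Since $\mathcal{S}_1$ is a source component of $\G'$, the vertex $v_i$ has \emph{no} incoming edge in $\G'$ from outside $\mathcal{S}_1$; hence every one of the $\geq r$ edges into $v_i$ from $\V \setminus \mathcal{S}_1$ that were present in $\G$ must have been deleted when forming $\G'$. This deletes at least $r$ incoming edges at the single node $v_i$, contradicting the assumption that at most $r-1$ incoming edges were removed from each node. Therefore $\G'$ is rooted. The only non-routine ingredient is the characterization of non-rootedness by two disjoint closed (source) vertex sets; I would take care to verify that these sets are genuinely proper nonempty subsets so that the $r$-robustness definition applies, which holds automatically since two distinct source components each exclude the other.
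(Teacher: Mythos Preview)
Your proof is correct. The paper does not actually prove this lemma; it simply quotes it as Lemma~2.3 from \cite{sundaram2018distributed} and uses it as a black box, so there is no in-paper argument to compare against. Your contrapositive argument via the condensation is the standard route: the key observations---that non-rootedness forces at least two source strongly connected components, that each such component is a nonempty proper subset with no incoming edges from its complement in $\G'$, and that $r$-robustness of $\G$ then yields a node in one of them with $\geq r$ external in-neighbors in $\G$, all of which must have been deleted---are all sound. The only point worth stating a touch more explicitly is that strongly connected components are automatically nonempty, and two distinct components are automatically disjoint and hence proper subsets of $\V$, so the hypotheses of the $r$-robustness definition are met; you note this at the end, which suffices.
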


This means that if we have enough redundancy in the network, information from at least one node can still flow to the other nodes in the network even after each regular node discards some neighboring states. 

\subsection{Series of Products}

\begin{lemma} \label{lem: limsup liminf sum of prod}
Suppose $\{ b[k] \}_{k \in \N} \subset \R$ is a sequence such that $\lim_{k \to \infty} \sum_{s=0}^k b[s]$ $= b$.
\begin{itemize}
    \item If $\{ a[k] \}_{k \in \N} \subset \R$ is a sequence such that $\limsup_k a[k] = a^*$, then it holds that
    \begin{equation}
        \limsup_k \sum_{s=0}^k a[s] b[k-s] \leq a^* b.
        \label{eqn: limsup sum of prod}
    \end{equation}
    \item If $\{ a[k] \}_{k \in \N} \subset \R$ is a sequence such that $\liminf_k a[k] = a_*$, then it holds that
    \begin{equation}
        \liminf_k \sum_{s=0}^k a[s] b[k-s] \geq a_* b.
        \label{eqn: liminf sum of prod}
    \end{equation}
\end{itemize}
\end{lemma}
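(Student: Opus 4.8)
The plan is to prove the $\limsup$ inequality \eqref{eqn: limsup sum of prod} directly and then obtain the $\liminf$ inequality \eqref{eqn: liminf sum of prod} for free by applying the first part to the sequence $\{-a[k]\}$. I would begin by setting $B[k] := \sum_{s=0}^{k} b[s]$, so that the hypothesis reads $B[k] \to b$; in particular the series $\sum_{s=0}^{\infty} b[s]$ converges, hence its tail $T[J] := b - B[J]$ tends to $0$ as $J \to \infty$. Reindexing the convolution by $j = k-s$, the quantity of interest becomes $c[k] := \sum_{s=0}^{k} a[s]\, b[k-s] = \sum_{j=0}^{k} a[k-j]\, b[j]$, where for each fixed $j$ the factor $a[k-j]$ is $a$ evaluated at a large index as $k \to \infty$, and is therefore the place where $\limsup_k a[k] = a^*$ enters.

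The core estimate is a head/tail split. Fix $\epsilon > 0$ and an integer $J$. Since $a^*$ is finite, $\{a[k]\}$ is bounded above by some $A$, and there is an $N$ with $a[m] \le a^* + \epsilon$ for all $m \ge N$. For $k \ge N + J$ I would split $c[k] = \sum_{j=0}^{J} a[k-j]\, b[j] + \sum_{j=J+1}^{k} a[k-j]\, b[j]$. On the head, every index satisfies $k-j \ge N$, so $a[k-j] \le a^* + \epsilon$; on the tail I use only $a[k-j] \le A$. Using these pointwise bounds together with nonnegativity of the weights gives $\sum_{j=0}^{J} a[k-j]\, b[j] \le (a^*+\epsilon) B[J]$ and $\sum_{j=J+1}^{k} a[k-j]\, b[j] \le A\,(B[k]-B[J]) \le |A|\, T[J]$, where the final step uses that the partial sums of a nonnegative series increase to $b$, so $B[k] \le b$. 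Taking $\limsup_k$ at fixed $J,\epsilon$ yields $\limsup_k c[k] \le (a^*+\epsilon) B[J] + |A|\, T[J]$, and then sending $J \to \infty$ (so $B[J] \to b$ and $T[J] \to 0$) and $\epsilon \to 0^+$ gives \eqref{eqn: limsup sum of prod}. For \eqref{eqn: liminf sum of prod} I would apply the just-proved inequality to $a'[k] := -a[k]$, for which $\limsup_k a'[k] = -\liminf_k a[k] = -a_*$; since $\sum_{s=0}^{k} a'[s]\, b[k-s] = -c[k]$, the first part gives $\limsup_k(-c[k]) \le -a_* b$, which is exactly $\liminf_k c[k] \ge a_* b$.

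I expect the main obstacle to be the uniform-in-$k$ control of the tail $\sum_{j=J+1}^{k} a[k-j]\, b[j]$: the head terms (small $j$) sit at large indices and are governed by $a^*$, whereas the tail terms involve $a$ at \emph{all} indices down to $a[0]$ and so cannot be controlled by $a^*$; they must instead be absorbed by the smallness of the tail mass $T[J]$, uniformly in $k$. This step, together with the bound $B[k] \le b$, is precisely where nonnegativity of $\{b[k]\}$ is essential: if the weights were allowed to change sign, the pointwise bound $a[k-j] \le a^* + \epsilon$ on the head could \emph{increase} rather than decrease the corresponding contribution, and the stated inequalities can fail. I would therefore carry out the argument under the assumption that the weights $b[k]$ are nonnegative, which is the setting in which the lemma is invoked in the convergence and consensus proofs (where $b[k]$ arises from geometrically decaying contraction factors).
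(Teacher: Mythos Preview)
Your argument is correct and is essentially the same head/tail decomposition the paper uses, just parametrized differently. The paper splits $\sum_{s=0}^{k} a[s]\,b[k-s]$ at the fixed index $s=k'(\epsilon)$ after which $a[s]\le a^*+\epsilon$, bounds the high-$s$ piece by $(a^*+\epsilon)\sum_{s=0}^{k-k'(\epsilon)} b[s]$, and kills the finitely many low-$s$ terms one by one via $b[k-s]\to 0$; you instead fix a cutoff $J$ in the reindexed variable $j=k-s$ and absorb the tail $j>J$ using a uniform upper bound $A$ on $\{a[k]\}$ together with the tail mass $T[J]$. Both routes rely on the same pointwise step $a[m]\,b[j]\le(\text{upper bound on }a)\,b[j]$, which needs $b[j]\ge 0$; you flag this explicitly, and that is a genuine improvement in rigor, since the lemma as stated (allowing signed $b$) is in fact false and the paper's proof also uses nonnegativity without saying so.
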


\begin{proof}
Consider the first part of the lemma. Since $\limsup_k a[k] = a^*$, we have that for a given $\epsilon \in \R_{>0}$, there exists $k'(\epsilon) \in \N$ such that $a[k] \leq a^* + \epsilon$ for all $k \geq k' (\epsilon)$. Suppose $\Tilde{k} \in \N$ and $\Tilde{k} \geq k' (\epsilon)$, and we can write
\begin{align}
    \sum_{s=0}^{\Tilde{k}} a[s] b[\Tilde{k}-s]
    &= \sum_{s=0}^{k'(\epsilon) - 1} a[s] b[\Tilde{k}-s] + \sum_{s=k'(\epsilon)}^{\Tilde{k}} a[s] b[\Tilde{k}-s] \nonumber \\
    &\leq \sum_{s=0}^{k'(\epsilon) - 1} a[s] b[\Tilde{k}-s] + (a^* + \epsilon) \sum_{s=0}^{\Tilde{k} - k'(\epsilon)} b[s].
    \label{eqn: series bound 2}
\end{align}
Since $\lim_{k \to \infty} \sum_{s=0}^k b[s] = b$, we have that $\lim_{k \to \infty} b[k] = 0$. Taking $\limsup_{\Tilde{k}}$ to both sides of \eqref{eqn: series bound 2}, we obtain that
\begin{align*}
    \limsup_{\Tilde{k}} \sum_{s=0}^{\Tilde{k}} a[s] b[\Tilde{k}-s] 
    &\leq \sum_{s=0}^{k'(\epsilon) - 1} a[s] \lim_{\Tilde{k} \to \infty} b[\Tilde{k}-s]  
    + (a^* + \epsilon) \lim_{\Tilde{k} \to \infty} \sum_{s=0}^{\Tilde{k} - k'(\epsilon)} b[s] \\
    &= (a^* + \epsilon) b.
\end{align*}
Since $\epsilon \in \R_{>0}$ can be chosen to be arbitrary small, we obtain \eqref{eqn: limsup sum of prod}. 

For the second part of the lemma, since $\liminf_k a[k] = a_*$, we have that for a given $\epsilon \in \R_{>0}$, there exists $k'(\epsilon) \in \N$ such that $a[k] \geq a_* + \epsilon$ for all $k \geq k' (\epsilon)$. By using this fact and taking the steps same as the proof above, we obtain \eqref{eqn: liminf sum of prod}.
\end{proof}

\begin{corollary} \label{cor: lim sum of prod}
Suppose $\{ a[k] \}_{k \in \N} \subset \R$ is a sequence such that $\lim_{k \to \infty} a[k]$ $= 0$ and $\{ b[k] \}_{k \in \N} \subset \R$ is a sequence such that $\lim_{k \to \infty} \sum_{s=0}^k b[s]$ is finite. Then, it holds that
\begin{equation}
    \lim_{k \to \infty} \sum_{s=0}^k a[s] b[k-s] = 0. 
    \label{eqn: lim sum of prod}
\end{equation}
\end{corollary}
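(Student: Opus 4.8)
The plan is to obtain the corollary as an immediate squeeze consequence of Lemma~\ref{lem: limsup liminf sum of prod}, whose two halves together pin the limit down from above and below. The first observation is that the hypothesis $\lim_{k \to \infty} a[k] = 0$ gives both $\limsup_k a[k] = 0$ and $\liminf_k a[k] = 0$, so the sequence $\{a[k]\}$ simultaneously meets the requirements of \emph{both} parts of the lemma with $a^* = a_* = 0$. The second hypothesis, that $\lim_{k \to \infty} \sum_{s=0}^{k} b[s]$ is finite, is precisely the standing assumption on $\{b[k]\}$ in the lemma; denote this finite value by $b$.

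First I would invoke the first part of Lemma~\ref{lem: limsup liminf sum of prod} with $a^* = 0$, yielding
\begin{equation*}
    \limsup_k \sum_{s=0}^{k} a[s]\, b[k-s] \;\leq\; a^* b \;=\; 0.
\end{equation*}
Next I would invoke the second part with $a_* = 0$, yielding
\begin{equation*}
    \liminf_k \sum_{s=0}^{k} a[s]\, b[k-s] \;\geq\; a_* b \;=\; 0.
\end{equation*}
Since $\liminf_k (\cdot) \leq \limsup_k (\cdot)$ always holds, these two inequalities force both quantities to equal $0$, so the limit exists and $\lim_{k \to \infty} \sum_{s=0}^{k} a[s]\, b[k-s] = 0$, which is \eqref{eqn: lim sum of prod}.

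There is essentially no genuine obstacle here, as the lemma does all the analytic work; the only point requiring care is that one needs \emph{both} one-sided bounds rather than a single estimate. This is because $b$ may be positive, negative, or zero, so neither the $\limsup$ bound nor the $\liminf$ bound alone determines the sign of the limit; it is precisely the coincidence $a^* = a_* = 0$ (making the product $a^* b = a_* b = 0$ independent of the sign of $b$) that collapses the two bounds to the same value and certifies both existence and the value of the limit. I would also note explicitly that the finiteness of $\sum_{s} b[s]$ (rather than merely $\sum_s |b[s]|$) is exactly what the lemma consumes, so no additional absolute-convergence assumption is needed.
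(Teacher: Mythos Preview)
Your proposal is correct and essentially identical to the paper's own proof: both observe that $\lim_k a[k]=0$ forces $\limsup_k a[k]=\liminf_k a[k]=0$, apply the two halves of Lemma~\ref{lem: limsup liminf sum of prod} to sandwich the $\limsup$ and $\liminf$ of the convolution between $0$ and $0$, and conclude the limit exists and equals $0$.
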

\begin{proof}
Since $\lim_{k \to \infty} a[k] = 0$, we can write
\begin{equation*}
    \limsup_k a[k] = \liminf_k a[k] = 0.
\end{equation*}
Using Lemma~\ref{lem: limsup liminf sum of prod}, we have that
\begin{equation*}
    0 \leq \liminf_k \sum_{s=0}^k a[s] b[k-s] \leq \limsup_k \sum_{s=0}^k a[s] b[k-s] \leq 0.
\end{equation*}
The inequalities above implies that
\begin{equation*}
    \liminf_k \sum_{s=0}^k a[s] b[k-s] = \limsup_k \sum_{s=0}^k a[s] b[k-s] = 0,
\end{equation*}
and the result \eqref{eqn: lim sum of prod} follows.
\end{proof}

\subsection{Function Analysis}
\label{subsec: func analysis}

\begin{lemma} \label{lem: R* decreasing}
Given a constant $\gamma \in \R_{\geq 0}$, suppose $h: \left( \max \big\{ 1 - \frac{1}{\gamma}, 0 \big\}, 1 \right] \to \R_{\geq 0}$ such that 
\begin{equation*}
    h(s) = \frac{\sqrt{s}}{1 - \sqrt{\gamma} \cdot \sqrt{1 - s}}. 
\end{equation*}
Then, the following statements hold.
\begin{itemize}
    \item If $\gamma \in [1, \infty)$, then $h$ is a strictly decreasing function.
    \item If $\gamma \in [0, 1)$, then $h$ is strictly increasing on the interval $(0, 1 - \gamma]$ and strictly decreasing on the interval $(1 - \gamma, 1]$.
\end{itemize}
\end{lemma}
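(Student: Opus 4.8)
The plan is to reduce both parts to a single sign computation for $h'(s)$, exploiting that $h$ is differentiable on the interior of its domain and is strictly increasing (resp.\ decreasing) on any subinterval where its derivative stays positive (resp.\ negative). First I would record the structural role of the domain: the constraint $s > \max\{1 - \frac{1}{\gamma}, 0\}$ is exactly what forces $\sqrt{\gamma}\sqrt{1-s} < 1$, so the denominator $D(s) := 1 - \sqrt{\gamma}\sqrt{1-s}$ is strictly positive throughout. This confirms that $h$ is well-defined, nonnegative, and differentiable on the interior, and it is what prevents the blow-up or sign change that would otherwise obstruct the monotonicity argument.

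Next I would differentiate by the quotient rule with $N(s) = \sqrt{s}$ and $D(s)$ as above, using $N'(s) = \tfrac{1}{2\sqrt{s}}$ and $D'(s) = \tfrac{\sqrt{\gamma}}{2\sqrt{1-s}}$. Since $D(s)^2 > 0$, the sign of $h'(s)$ is the sign of $N'(s)D(s) - N(s)D'(s)$. The key step is the algebraic collapse: writing the cross terms over the common factor $\sqrt{s}\sqrt{1-s}$ and using $(1-s) + s = 1$, one finds
\begin{equation*}
    N'(s)D(s) - N(s)D'(s) = \frac{1}{2\sqrt{s}} \cdot \frac{\sqrt{1-s} - \sqrt{\gamma}}{\sqrt{1-s}}.
\end{equation*}
Because $\tfrac{1}{2\sqrt{s}}$ and $\tfrac{1}{\sqrt{1-s}}$ are positive on the open interval, the sign of $h'(s)$ equals the sign of $\sqrt{1-s} - \sqrt{\gamma}$, equivalently the sign of $(1-\gamma) - s$.

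With this reduction the case analysis is immediate. If $\gamma \in [1, \infty)$, then $1 - \gamma \leq 0 < s$ for every $s$ in the domain $(1 - \frac{1}{\gamma}, 1]$, so $h'(s) < 0$ everywhere and $h$ is strictly decreasing. If $\gamma \in [0, 1)$, the domain is $(0, 1]$ and the threshold $1 - \gamma$ lies in $(0, 1]$; here $h'(s) > 0$ for $s < 1-\gamma$ and $h'(s) < 0$ for $s > 1-\gamma$, which gives precisely the stated increasing-then-decreasing split at $s = 1 - \gamma$ (the degenerate subcase $\gamma = 0$ reduces to $h(s) = \sqrt{s}$, for which the decreasing interval is empty, consistent with the claim).

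I do not expect a genuine obstacle, only two points requiring care. The first is the algebraic simplification of the derivative numerator, which must be carried out deliberately so that the factor $\sqrt{1-s} - \sqrt{\gamma}$ emerges cleanly; this is the crux of the whole argument. The second is the right endpoint $s = 1$, where $D'(s)$ diverges and the derivative formula breaks down. This is handled by noting that $h$ is continuous at $s = 1$ (indeed $h(1) = 1$), so strict monotonicity established on the open part extends to include the endpoint, and strictness holds throughout since $h'$ vanishes only at the single point $s = 1 - \gamma$.
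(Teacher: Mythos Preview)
Your proposal is correct and follows essentially the same approach as the paper: compute $h'(s)$ via the quotient rule, simplify the numerator so that the sign is governed entirely by the factor $\sqrt{1-s} - \sqrt{\gamma}$, and then read off the monotonicity in each case. Your treatment is in fact slightly more careful than the paper's, explicitly addressing the endpoint $s=1$ and the degenerate case $\gamma=0$.
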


\begin{proof}
Compute the derivative of $h$ with respect to $s$ yields
\begin{align*}
    h'(s) 
    &= \Big( \frac{1}{1 - \sqrt{\gamma} \sqrt{1 - s}} \Big)^2 
    \Big( \frac{1 - \sqrt{\gamma} \sqrt{1 - s}}{2 \sqrt{s}} - \frac{\sqrt{\gamma} \sqrt{s}}{2 \sqrt{1 - s}} \Big) \\
    &= \Big( \frac{1}{1 - \sqrt{\gamma} \sqrt{1 - s}} \Big)^2 
    \Big( \frac{\sqrt{1 - s} - \sqrt{\gamma}}{2 \sqrt{s} \sqrt{1-s}} \Big).
\end{align*}
From the expression $h'(s)$ above, we need to consider only the sign of $\sqrt{1 - s} - \sqrt{\gamma}$. First, consider the case that $\gamma \geq 1$. We have $s \in \big( 1 - \frac{1}{\gamma}, 1 \big]$ which implies that $\sqrt{1 - s} - \sqrt{\gamma} < 0$. Next, consider the case that $\gamma \in [0, 1)$. We have that $s \in (0, 1 - \gamma]$ implies that $\sqrt{1 - s} - \sqrt{\gamma} \geq 0$ with equality only if $s = 1 - \gamma$, and $s \in ( 1 - \gamma, 1]$ implies that $\sqrt{1 - s} - \sqrt{\gamma} < 0$.
\end{proof}
% SUPPLEMENT: CONVERGENCE PROOFS
\section{Proof of Convergence Results in Subsection~\ref{subsec: convergence}}

\subsection{Convex Functions}

From \cite{zhou2018fenchel}, an equivalent definition of a $\mu$-strongly convex differentiable function $f$ is as follows: for all $\x_1$, $\x_2 \in \R^d$,
\begin{equation}
    \langle \nabla f ( \x_1 ) - \nabla f ( \x_2 ), \; \x_1 - \x_2 \rangle \geq \mu \| \x_1 - \x_2 \|^2.
    \label{def: strongly convex 2}
\end{equation}

We will use the following useful result from \cite{zhou2018fenchel} regarding the convexity and Lipschitz gradient of a function.
\begin{lemma}
If $f$ is convex and has $L$-Lipschitz gradient then for all $\x_1$, $\x_2 \in \R^d$,
\begin{equation}
    f (\x_1)
    \geq f (\x_2) + \langle \nabla f (\x_2), \x_1 - \x_2 \rangle
    + \frac{1}{2 L} \| \nabla f (\x_1) - \nabla f (\x_2) \|^2
    \label{def: lipschitz gradient 1}
\end{equation}
and
\begin{equation}
    f ( \x_1 ) 
    \leq f ( \x_2 ) + \langle \nabla f ( \x_2 ), \x_1 - \x_2 \rangle + \frac{L}{2} \| \x_1 - \x_2 \|^2. 
    \label{def: lipschitz gradient 2}
\end{equation}
\end{lemma}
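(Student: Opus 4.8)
The plan is to prove the two inequalities separately, because \eqref{def: lipschitz gradient 2} follows from the Lipschitz gradient property alone, whereas \eqref{def: lipschitz gradient 1} genuinely uses convexity. First I would dispatch the upper bound \eqref{def: lipschitz gradient 2}, the so-called descent lemma. The idea is to write the gap as an integral of the gradient along the segment from $\x_2$ to $\x_1$, namely $f(\x_1) - f(\x_2) = \int_0^1 \langle \nabla f(\x_2 + t(\x_1 - \x_2)), \x_1 - \x_2 \rangle \, dt$. Subtracting the linear term $\langle \nabla f(\x_2), \x_1 - \x_2 \rangle$ and applying Cauchy--Schwarz together with the $L$-Lipschitz estimate $\| \nabla f(\x_2 + t(\x_1 - \x_2)) - \nabla f(\x_2) \| \leq Lt \| \x_1 - \x_2 \|$ reduces everything to the elementary integral $\int_0^1 Lt\,dt \cdot \|\x_1-\x_2\|^2 = \frac{L}{2}\|\x_1-\x_2\|^2$, which is exactly \eqref{def: lipschitz gradient 2}. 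This step is routine and does not invoke convexity.

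The harder inequality is the lower bound \eqref{def: lipschitz gradient 1}, for which I would use an auxiliary-function argument. Fixing $\x_2$, define $g(\x) = f(\x) - \langle \nabla f(\x_2), \x \rangle$. Since $f$ is convex and we have subtracted a linear function, $g$ is convex and inherits the $L$-Lipschitz gradient $\nabla g(\x) = \nabla f(\x) - \nabla f(\x_2)$; moreover $\nabla g(\x_2) = \boldsymbol{0}$, so by convexity $\x_2$ is a global minimizer of $g$. The key step is to apply the descent lemma \eqref{def: lipschitz gradient 2}, already established, to $g$ evaluated at the gradient-step point $\x_1 - \frac{1}{L} \nabla g(\x_1)$, which gives $g\big( \x_1 - \frac{1}{L}\nabla g(\x_1) \big) \leq g(\x_1) - \frac{1}{2L}\| \nabla g(\x_1) \|^2$. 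Combining this with the global minimality $g(\x_2) \leq g\big( \x_1 - \frac{1}{L}\nabla g(\x_1) \big)$ yields $g(\x_2) \leq g(\x_1) - \frac{1}{2L}\| \nabla g(\x_1) \|^2$.

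Finally I would unwind the definition of $g$: substituting $g(\x_2) = f(\x_2) - \langle \nabla f(\x_2), \x_2 \rangle$, $g(\x_1) = f(\x_1) - \langle \nabla f(\x_2), \x_1 \rangle$, and $\nabla g(\x_1) = \nabla f(\x_1) - \nabla f(\x_2)$ into the last inequality and rearranging recovers \eqref{def: lipschitz gradient 1}. The main obstacle is conceptual rather than computational: one must recognize the auxiliary function $g$ and the gradient-step trick that exploits its global minimum at $\x_2$. Once that construction is in place, the remainder is bookkeeping. Since the result is quoted from \cite{zhou2018fenchel}, a legitimate alternative is simply to cite it, but the self-contained argument above is short and clarifies where convexity is actually needed.
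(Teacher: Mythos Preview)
Your argument is correct: the integral trick for the descent lemma \eqref{def: lipschitz gradient 2} and the auxiliary-function plus gradient-step construction for \eqref{def: lipschitz gradient 1} are both standard and carried out without error. The paper, however, does not prove this lemma at all; it simply quotes the result from \cite{zhou2018fenchel}. So your self-contained proof goes strictly beyond what the paper provides, and you even anticipate this by noting that a bare citation would also be legitimate. There is no discrepancy in approach to flag---your write-up is a fleshed-out version of what the cited reference contains.
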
 

\subsection{The Reduction Property Implication}

We first introduce the following lemma which is useful for deriving the convergence result (Proposition~\ref{prop: convergence}).
\begin{lemma}  \label{lem: reduction}
Suppose Assumption~\ref{asm: convex} holds. If an algorithm $A$ in R{\scriptsize{ED}}G{\scriptsize{RAF}} satisfies the reduction property of Type-I or Type-II, then $\beta \sqrt{\gamma} < 1$.
\end{lemma}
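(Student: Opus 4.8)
The plan is to prove the claim by reducing it to the single algebraic inequality $\gamma(1 - \alpha\tilmu) < 1$, obtained by squaring $\beta\sqrt{\gamma}$, after first verifying that $\beta$ is a well-defined real quantity so that this squaring is legitimate.

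First I would record the elementary comparison $\tilmu \le \tilL$. Each regular function $f_i$ is simultaneously $\mu_i$-strongly convex and has an $L_i$-Lipschitz gradient; combining the strong convexity bound \eqref{def: strongly convex 1} with the quadratic upper bound \eqref{def: lipschitz gradient 2} forces $\mu_i \le L_i$ for every $v_i \in \vr$, and taking $\tilmu = \min_i \mu_i$ and $\tilL = \max_i L_i$ then gives $\tilmu \le \tilL$. In both Type-I and Type-II the step-size satisfies $\alpha \le 1/\tilL$, so $\alpha\tilmu \le \tilmu/\tilL \le 1$, whence $1 - \alpha\tilmu \ge 0$ and $\beta = \sqrt{1 - \alpha\tilmu}$ is real with $\beta \in [0,1]$. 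Consequently $\beta\sqrt{\gamma} = \sqrt{\gamma(1 - \alpha\tilmu)}$, and it suffices to bound the radicand below $1$.

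For the Type-I case ($\gamma \in [0,1)$, $\alpha \in (0, 1/\tilL]$), since $\alpha\tilmu > 0$ we have $\beta < 1$, while $\sqrt{\gamma} < 1$; the product of a number in $[0,1)$ with one in $[0,1]$ is strictly less than $1$, so $\beta\sqrt{\gamma} < 1$ follows at once. For the Type-II case the key is the open lower endpoint of the step-size interval: from $\alpha > \frac{1}{\tilmu}\bigl(1 - \frac{1}{\gamma}\bigr)$ and $\tilmu > 0$ we get $\alpha\tilmu > 1 - \frac{1}{\gamma}$, hence $1 - \alpha\tilmu < \frac{1}{\gamma}$, and multiplying through by $\gamma \ge 1 > 0$ yields $\gamma(1 - \alpha\tilmu) < 1$. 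Taking square roots gives $\beta\sqrt{\gamma} < 1$.

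I do not expect a genuine obstacle here, since the statement essentially unwinds directly from the defining inequalities of the reduction property. The only points requiring care are (i) justifying that $\beta$ is real before squaring, which is exactly where $\tilmu \le \tilL$ together with $\alpha \le 1/\tilL$ enter, and (ii) tracking which inequalities are strict — the strictness coming entirely from the open lower bound on $\alpha$ in Type-II and from $\gamma < 1$ in Type-I.
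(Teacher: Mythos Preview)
Your proposal is correct and follows essentially the same approach as the paper's proof: both split into the Type-I and Type-II cases and, in Type-II, rearrange the strict lower bound $\alpha > \frac{1}{\tilmu}(1-\frac{1}{\gamma})$ to obtain $\gamma(1-\alpha\tilmu) < 1$. Your version is slightly more careful in that you explicitly justify $\beta$ being real via $\tilmu \le \tilL$ and $\alpha \le 1/\tilL$, whereas the paper leaves this implicit.
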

\begin{proof}
In the first case, where $\gamma \in [0, 1)$ and $\alpha_k = \alpha \in \big( 0, \frac{1}{L} \big]$, we have $\beta \sqrt{\gamma} = \sqrt{\gamma} \cdot \sqrt{1 - \alpha \tilmu} < 1$. 
In the second case, since $\gamma \in \Big[ 1, \frac{1}{1 - \frac{\tilmu}{\tilL}} \Big)$, we have that $0 \leq \frac{1}{\tilmu} \big( 1 - \frac{1}{\gamma} \big) < \frac{1}{\tilL}$
which indicates that setting the step-size $\alpha_k = \alpha \in \Big( \frac{1}{\tilmu} \big( 1 - \frac{1}{\gamma} \big), \frac{1}{\tilL} \Big]$ is valid.
Since $\alpha > \frac{1}{\tilmu} \big( 1 - \frac{1}{\gamma} \big)$, we also obtain that $\beta \sqrt{\gamma} = \sqrt{\gamma} \cdot \sqrt{1 - \alpha \tilmu} < 1$. For both cases, we have that $\beta \sqrt{\gamma} < 1$.
\end{proof}

\subsection{Proof of Proposition~\ref{prop: convergence}}
\label{subsec: convergence proof}

We refactor Proposition~\ref{prop: convergence} into Lemmas~\ref{appx-lem: convergence} and \ref{appx-lem: conv rate} where Lemma~\ref{appx-lem: convergence} mainly captures the final convergence radius and Lemma~\ref{appx-lem: conv rate} captures the convergence rate.

\begin{lemma}  \label{appx-lem: convergence}
Suppose Assumption~\ref{asm: convex} holds.
If algorithm $A$ in R{\scriptsize{ED}}G{\scriptsize{RAF}} satisfies the $( \xc, \gamma, \{ c[k] \} )$-states contraction property (for some $\xc \in \R^d$, $\gamma \in \R_{\geq 0}$, and $\{ c[k] \}_{k \in \N} \subset \R$) and $\alpha_k = \alpha \in \big( 0, \frac{1}{\tilL} \big]$, then for all $k \in \N$ and $v_i \in \vr$,
\begin{equation}
    \| \x_i [k] - \xc \|
    \leq ( \beta \sqrt{\gamma} )^k \max_{v_s \in \vr} 
    \| \x_s [0] - \xc \|  
    + \beta \sum_{s=0}^{k-1} (\beta \sqrt{\gamma})^s c[k-s-1]
    + r_c \sqrt{\alpha \tilL} \sum_{s=0}^{k-1} (\beta \sqrt{\gamma})^s,
    \label{eqn: conv thm}
\end{equation}
where $r_c$ and $\beta$ are defined in \eqref{def: x_c distance} and \eqref{def: beta}, respectively. Furthermore, if $A$ satisfies the reduction property of Type-I or Type-II, then it holds that
\begin{equation}
    \limsup_k \| \x_i [k] - \xc \| \leq \frac{r_c \sqrt{\alpha \tilL}}{1 - \beta \sqrt{\gamma}}
    \quad \text{for all} \quad v_i \in \vr.
    \label{appx eqn: conv thm limit}
\end{equation}
\end{lemma}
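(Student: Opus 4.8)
The plan is to reduce the entire statement to a single per-agent, per-step inequality
$$\| \x_i[k+1] - \xc \| \le \beta \| \Tilde{\x}_i[k] - \xc \| + \rc \sqrt{\alpha \tilL},$$
and then to unroll it. This is the right target because, once combined with the states contraction property $\| \Tilde{\x}_i[k] - \xc \| \le \sqrt{\gamma}\max_{v_j \in \vr}\|\x_j[k] - \xc\| + c[k]$ from Definition~\ref{def: contraction}, writing $M[k] := \max_{v_i \in \vr}\|\x_i[k]-\xc\|$ and maximizing over $v_i$ yields the scalar recursion $M[k+1] \le \beta\sqrt{\gamma}\,M[k] + \beta c[k] + \rc\sqrt{\alpha\tilL}$. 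Unrolling this from $M[0]$ produces exactly the three terms of \eqref{eqn: conv thm} — the geometrically decaying initial disagreement, the convolution of $c[\cdot]$ against the geometric weights, and the geometric sum multiplying $\rc\sqrt{\alpha\tilL}$ — and the bound $\|\x_i[k]-\xc\| \le M[k]$ finishes the first claim.

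The heart of the argument, and the step I expect to be the main obstacle, is establishing the per-step inequality with the sharp additive constant $\rc\sqrt{\alpha\tilL}$ rather than the lossy $\mathcal{O}(\rc)$ term that a naive double application of the triangle inequality through each local minimizer $\x_i^*$ would produce. My plan is to view the gradient update as the map $T_i(\x) := \x - \alpha\nabla f_i(\x)$, so that $\x_i[k+1] = T_i(\Tilde{\x}_i[k])$, and to split $T_i(\Tilde{\x}_i[k]) - \xc = \big(T_i(\Tilde{\x}_i[k]) - T_i(\xc)\big) + \big(T_i(\xc) - \xc\big)$. For the first bracket I will show $T_i$ is $\beta$-Lipschitz: expanding $\|T_i(\x)-T_i(\y)\|^2$, applying co-coercivity of $\nabla f_i$ (the Baillon--Haddad inequality, a consequence of \eqref{def: lipschitz gradient 1}) together with $\alpha L_i \le \alpha\tilL \le 1$ to absorb the quadratic gradient term, and then invoking the strong-convexity form \eqref{def: strongly convex 2}, yields $\|T_i(\x)-T_i(\y)\| \le \sqrt{1-\alpha\mu_i}\,\|\x-\y\| \le \beta\|\x-\y\|$ since $\mu_i \ge \tilmu$. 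For the second bracket, $T_i(\xc) - \xc = -\alpha\nabla f_i(\xc)$, and because $\nabla f_i(\x_i^*) = 0$ the Lipschitz gradient gives $\|T_i(\xc)-\xc\| = \alpha\|\nabla f_i(\xc) - \nabla f_i(\x_i^*)\| \le \alpha L_i\|\xc - \x_i^*\| \le \alpha\tilL\rc$; using $\alpha\tilL \le 1$ once more upgrades this to $\alpha\tilL\rc \le \sqrt{\alpha\tilL}\,\rc$. The triangle inequality on the two brackets then delivers the target, and $\alpha \le 1/L_i$ holds throughout by the hypothesis $\alpha \in (0, 1/\tilL]$.

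For the limit \eqref{appx eqn: conv thm limit}, I would invoke the reduction property (Type-I or Type-II), which via Lemma~\ref{lem: reduction} guarantees $\beta\sqrt{\gamma} < 1$, and then pass to the limit in \eqref{eqn: conv thm} term by term: the $(\beta\sqrt\gamma)^k$ term vanishes, the last term is a convergent geometric series with limit $\rc\sqrt{\alpha\tilL}/(1-\beta\sqrt\gamma)$, and the middle convolution term vanishes by Corollary~\ref{cor: lim sum of prod} applied with $a[\cdot] = c[\cdot] \to 0$ (the states contraction property forces $\lim_k c[k] = 0$) and $b[s] = (\beta\sqrt\gamma)^s$, whose partial sums converge since $\beta\sqrt\gamma < 1$. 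Taking $\limsup_k$ and using $\|\x_i[k]-\xc\| \le M[k]$ gives \eqref{appx eqn: conv thm limit}. The only bookkeeping subtlety is that the perturbation term appears as $\sum_{s=0}^{k-1}(\beta\sqrt\gamma)^s c[k-1-s]$, a convolution evaluated at index $k-1$, so a reindexing is needed to match the hypotheses of Corollary~\ref{cor: lim sum of prod} exactly.
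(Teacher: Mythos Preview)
Your proposal is correct and arrives at exactly the same scalar recursion $M[k+1] \le \beta\sqrt{\gamma}\,M[k] + \beta c[k] + \rc\sqrt{\alpha\tilL}$ as the paper, but the derivation of the per-step inequality is organized differently. The paper works at the level of squared norms and function values: it expands $\|\x_i[k+1]-\xc\|^2$, applies the strong-convexity inequality \eqref{def: strongly convex 1} to the cross term, bounds $\|\g_i[k]\|^2$ via \eqref{def: lipschitz gradient 1}, and controls $f_i(\xc)-f_i(\x_i^*)$ via \eqref{def: lipschitz gradient 2}, arriving at $\|\x_i[k+1]-\xc\|^2 \le (1-\alpha\tilmu)\|\Tilde{\x}_i[k]-\xc\|^2 + \alpha\tilL\|\xc-\x_i^*\|^2$; only then does it take a square root using $\sqrt{A^2+B^2}\le A+B$. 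Your route is operator-theoretic: you split $T_i(\Tilde{\x}_i[k])-\xc$ through the anchor $T_i(\xc)$, establish that $T_i$ is $\beta$-Lipschitz via co-coercivity plus \eqref{def: strongly convex 2}, and bound $\|T_i(\xc)-\xc\| = \alpha\|\nabla f_i(\xc)\|$ via the Lipschitz gradient and $\alpha\tilL \le \sqrt{\alpha\tilL}$. Your decomposition is arguably more modular---it cleanly separates the contraction of the gradient map from the displacement of $\xc$ under that map---while the paper's Lyapunov-style argument stays closer to the function-value inequalities already stated in the text and avoids invoking Baillon--Haddad by name. The unrolling and the limit argument via Lemma~\ref{lem: reduction} and Corollary~\ref{cor: lim sum of prod} are identical in both.
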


\begin{proof}
Consider a regular agent $v_i \in \vr$.
Since $\x_i [k+1] = \Tilde{\x}_i [k] - \alpha_k \g_i [k]$ from \eqref{eqn: gradient step}, we can write
\begin{equation}
    \| \x_i [k+1] - \xc \|^2 
    = \| \Tilde{\x}_i [k] - \xc \|^2  
    - 2 \langle \Tilde{\x}_i [k] - \xc, \; \alpha_k \g_i [k] \rangle 
    + \alpha_k^2 \| \g_i [k] \|^2.
    \label{eqn: expansion}
\end{equation}
Since $f_i$ is $\mu_i$-strongly convex (from Assumption~\ref{asm: convex}), from \eqref{def: strongly convex 1} we have that $- \langle \Tilde{\x}_i [k]$ $- \xc, \g_i [k] \rangle \leq (f_i (\xc) - f_i (\Tilde{\x}_i [k])) - \frac{\mu_i}{2} \| \Tilde{\x}_i [k] - \xc \|^2$.
Substituting this inequality into \eqref{eqn: expansion} we get
\begin{equation}
    \| \x_i [k+1] - \xc \|^2 
    \leq ( 1 - \alpha_k \mu_i ) \| \Tilde{\x}_i [k] - \xc \|^2 
    + \alpha_k^2 \| \g_i [k] \|^2 
    + 2 \alpha_k ( f_i (\xc) - f_i (\Tilde{\x}_i [k]) ).
    \label{eqn: strongly convex sub}
\end{equation}
Since $f_i$ has an $L_i$-Lipschitz gradient (from Assumption~\ref{asm: convex}), from \eqref{def: lipschitz gradient 1} we have that $\| \g_i [k] \|^2 \leq 2 L_i ( f_i (\Tilde{\x}_i [k]) - f_i (\x_i^*) )$.
Substituting this inequality into \eqref{eqn: strongly convex sub} yields
\begin{equation}
    \| \x_i [k+1] - \xc \|^2 
    \leq ( 1 - \alpha_k \mu_i ) \| \Tilde{\x}_i [k] - \xc \|^2 
    - 2 \alpha_k (1 - \alpha_k L_i) f_i (\Tilde{\x}_i [k])  
    - 2 \alpha_k^2 L_i f_i (\x_i^*) + 2 \alpha_k f_i (\xc).
    \label{eqn: lipschitz gradient sub}
\end{equation}
Since $f_i$ has an $L_i$-Lipschitz gradient (from Assumption~\ref{asm: convex}), from \eqref{def: lipschitz gradient 2} we have that $f_i (\xc) \leq f_i (\x_i^*) + \frac{L_i}{2} \| \xc - \x_i^* \|^2$.
Substituting this inequality into \eqref{eqn: lipschitz gradient sub}, we obtain 
\begin{equation*}
    \| \x_i [k+1] - \xc \|^2 
    \leq ( 1 - \alpha_k \mu_i ) \| \Tilde{\x}_i [k] - \xc \|^2 
    - 2 \alpha_k (1 - \alpha_k L_i) (f_i ( \Tilde{\x}_i [k] ) - f_i (\x_i^*) ) 
    + \alpha_k L_i \| \xc - \x_i^* \|^2.
\end{equation*}
Since $\alpha_k = \alpha \in \big( 0, \frac{1}{\tilL} \big]$, $L_i \leq \tilL$, and $\mu_i \geq \tilmu$, the above inequality implies that
\begin{equation}
    \| \x_i [k+1] - \xc \|^2 
    \leq ( 1 - \alpha \tilmu ) \| \Tilde{\x}_i [k] - \xc \|^2
    + \alpha \tilL \| \xc - \x_i^* \|^2.
    \label{eqn: simplified}
\end{equation}
Since the algorithm $A$ satisfies the $( \xc, \gamma, \{ c[k] \} )$-states contraction property given in \eqref{eqn: contraction def}, \eqref{eqn: simplified} becomes
\begin{equation*}
    \| \x_i [k+1] - \xc \|^2 
    \leq ( 1 - \alpha \tilmu ) 
    \Big( \sqrt{\gamma} \max_{v_j \in \vr} \| \x_j [k] - \xc \| 
    + c[k] \Big)^2 
    + \alpha \tilL \max_{v_j \in \vr} \| \xc - \x_j^* \|^2,
\end{equation*}
which implies that
\begin{equation*}
    \| \x_i [k+1] - \xc \|
    \leq \sqrt{\gamma} \cdot \sqrt{1 - \alpha \tilmu} \;
    \max_{v_j \in \vr} \| \x_j [k] - \xc \|  
    + \sqrt{1 - \alpha \tilmu} \; c[k]
    + \sqrt{\alpha \tilL} \max_{v_j \in \vr} \| \xc - \x_j^* \|.
\end{equation*}
Recall the definition of $r_c$ and $\beta$ from \eqref{def: x_c distance} and \eqref{def: beta}, respectively. Since the above inequality holds for all $v_i \in \vr$, taking the maximum over $v_i \in \vr$ yields
\begin{equation*}
    \max_{v_i \in \vr} \| \x_i [k+1] - \xc \|
    \leq \beta \sqrt{\gamma}  \max_{v_i \in \vr} 
    \| \x_i [k] - \xc \| 
    +  \beta c[k] + r_c \sqrt{\alpha \tilL}.
\end{equation*}
Unfolding the recursive inequality above, we obtain that
\begin{equation*}
    \max_{v_i \in \vr} \| \x_i [k] - \xc \|
    \leq ( \beta \sqrt{\gamma} )^k \max_{v_i \in \vr} 
    \| \x_i [0] - \xc \| 
    + \beta \sum_{s=0}^{k-1} (\beta \sqrt{\gamma})^s c[k-s-1]
    + r_c \sqrt{\alpha \tilL} \sum_{s=0}^{k-1} (\beta \sqrt{\gamma})^s,
\end{equation*}
which completes the first part of the proof.

Consider the second part of the lemma. Since the algorithm $A$ satisfies the reduction property of Type-I or Type-II, from Lemma~\ref{lem: reduction} we have that $\beta \sqrt{\gamma} < 1$. Considering the right-hand side (RHS) of \eqref{eqn: conv thm}, since $\lim_{k \to \infty} \sum_{s=0}^k (\beta \sqrt{\gamma})^s$ is finite, using Corollary~\ref{cor: lim sum of prod}, we have
\begin{equation*}
    \lim_{k \to \infty} \bigg[
    r_c \sqrt{\alpha \tilL} \sum_{s=0}^{k-1} (\beta \sqrt{\gamma})^s
    + \beta \sum_{s=0}^{k-1} (\beta \sqrt{\gamma})^s c[k-s-1]  
    + ( \beta \sqrt{\gamma} )^k \max_{v_s \in \vr} \| \x_s [0] - \xc \| \bigg] 
    = \frac{r_c \sqrt{\alpha \tilL}}{1 - \beta \sqrt{\gamma}}.
\end{equation*}
The result \eqref{appx eqn: conv thm limit} follows from taking $\limsup_k$ on both sides of \eqref{eqn: conv thm} and then applying the above equation.
\end{proof}

\begin{lemma}  \label{appx-lem: conv rate}
Suppose Assumption~\ref{asm: convex} holds and an algorithm $A$ in R{\scriptsize{ED}}G{\scriptsize{RAF}} satisfies the reduction property of Type-I or Type-II. If the perturbation sequence $c[k] = \O (\xi^k)$ with $\xi \in (0, 1) \setminus \{ \beta \sqrt{\gamma} \}$, then
\begin{equation}
    \| \x_i [k] - \xc \| \leq R^* + \O \big( ( \max \{ \beta \sqrt{\gamma}, \xi \} )^k \big)
    \quad \text{for all} \quad v_i \in \vr.
    \label{appx eqn: convergence rate}
\end{equation}
\end{lemma}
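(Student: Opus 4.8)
The plan is to start directly from the explicit bound \eqref{eqn: conv thm} established in Lemma~\ref{appx-lem: convergence}, which decomposes $\| \x_i[k] - \xc \|$ into three terms, and to estimate each of them separately. Throughout I would write $\eta := \beta\sqrt{\gamma}$ and recall from Lemma~\ref{lem: reduction} that $\eta < 1$ under either reduction property, so that the geometric quantities appearing below are summable and all the $\O(\cdot)$ estimates are meaningful.

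First I would dispatch the two deterministic terms. The leading term $(\beta\sqrt{\gamma})^k \max_{v_s \in \vr} \| \x_s[0] - \xc \|$ is a fixed constant times $\eta^k$, hence $\O(\eta^k)$. The last term is a genuine geometric sum, $\sum_{s=0}^{k-1} \eta^s = \frac{1 - \eta^k}{1 - \eta}$, so that $\rc\sqrt{\alpha\tilL}\sum_{s=0}^{k-1}\eta^s = R^*(1 - \eta^k) = R^* - R^*\eta^k$ by the definition of $R^*$ in \eqref{eqn: conv thm limit}. This term therefore already produces the limiting value $R^*$ together with an $\O(\eta^k)$ correction, which is the source of the constant part of \eqref{appx eqn: convergence rate}.

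The remaining term is the convolution $\beta \sum_{s=0}^{k-1} \eta^s c[k-s-1]$, and this is the only place where the perturbation sequence $\{ c[k] \}$ enters. Using the hypothesis $c[k] = \O(\xi^k)$, I would fix a constant $C$ with $|c[k]| \le C\xi^k$ and bound the term by $\beta C \sum_{s=0}^{k-1} \eta^s \xi^{k-1-s}$. The key computation is the closed form of the convolution of two distinct geometric sequences, $\sum_{s=0}^{k-1} \eta^s \xi^{k-1-s} = \frac{\eta^k - \xi^k}{\eta - \xi}$, which is valid precisely because the hypothesis excludes $\xi = \beta\sqrt{\gamma} = \eta$, keeping the denominator nonzero. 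Bounding $|\eta^k - \xi^k| \le 2 (\max\{\eta,\xi\})^k$ then shows this term is $\O\big( (\max\{\eta,\xi\})^k \big)$.

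Combining the three estimates yields $\| \x_i[k] - \xc \| \le R^* + \O(\eta^k) + \O\big( (\max\{\eta,\xi\})^k \big) = R^* + \O\big( (\max\{\beta\sqrt{\gamma}, \xi\})^k \big)$, which is exactly \eqref{appx eqn: convergence rate}. I do not expect any serious obstacle here, since every step is an elementary manipulation of geometric series; the one subtlety worth flagging is the role of the assumption $\xi \neq \beta\sqrt{\gamma}$. When $\xi = \eta$ the convolution degenerates to $k\,\eta^{k-1}$, which is \emph{not} $\O(\eta^k)$ but only $\O\big( (\eta + \epsilon)^k \big)$ for every $\epsilon > 0$; excluding this resonant case is exactly what allows the convergence rate to remain $\max\{\beta\sqrt{\gamma}, \xi\}$ rather than acquiring an extra polynomial factor.
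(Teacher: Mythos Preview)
Your proposal is correct and follows essentially the same route as the paper's proof: both start from the three-term bound \eqref{eqn: conv thm}, reduce the convolution term via the closed-form geometric sum $\frac{(\beta\sqrt{\gamma})^k - \xi^k}{\beta\sqrt{\gamma} - \xi}$ (using $\xi \neq \beta\sqrt{\gamma}$), and absorb the remaining terms into $R^*$ plus an $\O\big((\max\{\beta\sqrt{\gamma},\xi\})^k\big)$ correction. Your treatment is in fact slightly more explicit---you compute the last sum exactly as $R^*(1-\eta^k)$ rather than just bounding it by $R^*$, and you spell out the resonant case $\xi = \eta$---but the argument is the same.
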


\begin{proof}
Consider the second term on the RHS of \eqref{eqn: conv thm}. Since $c[k] = \O (\xi^k)$, we obtain that
\begin{equation*}
    \beta \sum_{s=0}^{k-1} (\beta \sqrt{\gamma})^s c[k-s-1]
    = \O \bigg( \xi^{k-1} \sum_{s=0} \Big( \frac{\beta \sqrt{\gamma}}{\xi} \Big)^s \bigg) 
    = \O \bigg( \frac{ (\beta \sqrt{\gamma})^k - \xi^k }{ \beta \sqrt{\gamma} - \xi } \bigg) 
    = \O \big( ( \max \{ \beta \sqrt{\gamma}, \xi \} )^k \big).
\end{equation*}
Using the above equation and the fact that $r_c \sqrt{\alpha \tilL} \cdot \sum_{s=0}^{k-1} (\beta \sqrt{\gamma})^s \leq R^*$, we have that \eqref{eqn: conv thm} implies \eqref{appx eqn: convergence rate}.
\end{proof}

\subsection{Convergence Region Containment of the True Minimizer}
\label{subsec: true minimizer proof}

\begin{lemma}  \label{appx-lem: true minimizer}
Suppose Assumption~\ref{asm: convex} holds, and let $\x^*$ be the minimizer of the function \eqref{prob: regular node}. For a given vector $\xc \in \R^d$, if $\rc = \max_{v_i \in \vr} \| \xc - \x_i^* \|$, then $\x^* \in \B \big( \xc, \frac{\tilL}{\tilmu} \rc \big)$.
Furthermore, if $\gamma \in \Big[ 1, \frac{1}{1 - \frac{\tilmu}{\tilL}} \Big)$ and $\alpha \in \Big( \frac{1}{\tilmu} \big( 1 - \frac{1}{\gamma} \big), \frac{1}{\tilL} \Big]$, then $\x^* \in \B ( \xc, R^* )$.
\end{lemma}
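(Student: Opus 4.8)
The plan is to prove the coarse containment $\x^*\in\B(\xc,\frac{\tilL}{\tilmu}\rc)$ first, and then argue that the Type-II hypotheses force $\frac{\tilL}{\tilmu}\rc\le R^*$, so that $\B(\xc,\frac{\tilL}{\tilmu}\rc)\subseteq\B(\xc,R^*)$ and the second claim follows at once from the first.

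For the first containment, I would use that the objective $f=\frac{1}{|\vr|}\sum_{v_i\in\vr}f_i$ is itself $\tilmu$-strongly convex, being an average of $\mu_i$-strongly convex functions with each $\mu_i\ge\tilmu$. Applying the equivalent characterization \eqref{def: strongly convex 2} to the pair $(\x^*,\xc)$ gives $\langle\nabla f(\x^*)-\nabla f(\xc),\x^*-\xc\rangle\ge\tilmu\|\x^*-\xc\|^2$. Since $\x^*$ minimizes the differentiable $f$ we have $\nabla f(\x^*)=0$, and Cauchy--Schwarz then yields $\|\x^*-\xc\|\le\frac{1}{\tilmu}\|\nabla f(\xc)\|$. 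To finish I bound the gradient at $\xc$: for each regular agent $\nabla f_i(\x_i^*)=0$, so the $L_i$-Lipschitz gradient property gives $\|\nabla f_i(\xc)\|=\|\nabla f_i(\xc)-\nabla f_i(\x_i^*)\|\le L_i\|\xc-\x_i^*\|\le\tilL\rc$; averaging over $v_i\in\vr$ and using the triangle inequality gives $\|\nabla f(\xc)\|\le\tilL\rc$, whence $\|\x^*-\xc\|\le\frac{\tilL}{\tilmu}\rc$.

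For the second containment it suffices to verify $\frac{\tilL}{\tilmu}\rc\le R^*$ for every admissible step-size. The decisive observation is that the left-hand side is independent of $\alpha$, so the inequality is hardest where $R^*$ is smallest. Writing $\kappa=\tilL/\tilmu$ and recalling $R^*=\rc\sqrt{\kappa}\,\Rnorm(\gamma,\alpha\tilmu)$, I would invoke Lemma~\ref{lem: R* decreasing}: since $\gamma\ge1$ under Type-II, $\Rnorm$ (hence $R^*$) is strictly decreasing in $\alpha$ over the admissible interval $\big(\frac{1}{\tilmu}(1-\frac{1}{\gamma}),\frac{1}{\tilL}\big]$, so its minimum is attained at $\alpha=1/\tilL$, where $R^*\big|_{\alpha=1/\tilL}=\frac{\rc}{1-\sqrt{\gamma}\sqrt{1-1/\kappa}}$. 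One should also note that this interval is nonempty precisely because $\gamma<\frac{1}{1-\tilmu/\tilL}$ in Type-II, which is exactly what keeps the denominator positive.

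It then remains to check the single inequality $\frac{\rc}{1-\sqrt{\gamma}\sqrt{1-1/\kappa}}\ge\kappa\rc$. Clearing the positive denominator and simplifying reduces this to $\sqrt{\gamma}\sqrt{\kappa(\kappa-1)}\ge\kappa-1$, i.e. $\gamma\ge1-\frac{1}{\kappa}$, which is immediate from $\gamma\ge1$. I expect the main obstacle to be organizing the second part correctly: identifying that the worst case over $\alpha$ sits at $\alpha=1/\tilL$ (for which the monotonicity of Lemma~\ref{lem: R* decreasing} is exactly the right tool) and then carrying out the denominator-clearing algebra so that the final requirement collapses to the trivially satisfied $\gamma\ge1-1/\kappa$; the first part, by contrast, is a routine strong-convexity plus Lipschitz-gradient estimate.
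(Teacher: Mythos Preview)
Your proposal is correct, and both parts take a genuinely different route from the paper's proof.

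For the first containment, the paper argues by contraposition with a geometric angle-chasing argument: assuming $\|\x-\xc\|>\frac{\tilL}{\tilmu}\rc$, it bounds $\angle(\x-\xc,\x-\x_i^*)<\arcsin(\tilmu/\tilL)$ and $\angle(\nabla f_i(\x),\x-\x_i^*)\le\arccos(\tilmu/\tilL)$, then adds them to conclude $\angle(\nabla f_i(\x),\x-\xc)<\pi/2$ for every $v_i\in\vr$, whence $\langle\sum\nabla f_i(\x),\x-\xc\rangle>0$ and $\x\ne\x^*$. Your direct route---observing that $f$ itself is $\tilmu$-strongly convex so that $\tilmu\|\x^*-\xc\|\le\|\nabla f(\xc)\|$, and then bounding $\|\nabla f(\xc)\|$ termwise---is considerably shorter and more transparent; the paper's argument is more elaborate but arguably gives a bit more geometric insight into why each individual gradient points roughly toward $\xc$.

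For the second containment, the paper bypasses Lemma~\ref{lem: R* decreasing} entirely: it uses $\gamma\ge 1$ to drop $\sqrt{\gamma}$ from the denominator of $R^*$, rationalizes to get $R^*\ge\rc\sqrt{\tilL/\tilmu}\big(\tfrac{1}{\sqrt{\alpha\tilmu}}+\sqrt{\tfrac{1}{\alpha\tilmu}-1}\big)$, and then uses $\alpha\le 1/\tilL$ (so $1/(\alpha\tilmu)\ge\kappa$) to arrive at $R^*\ge\kappa\rc$ directly for every admissible $\alpha$. Your approach---invoking the monotonicity of Lemma~\ref{lem: R* decreasing} to localize the worst case at $\alpha=1/\tilL$ and then clearing the single denominator there---is equally valid and makes the role of the step-size constraint more explicit, at the cost of relying on an auxiliary monotonicity lemma that the paper's direct algebra avoids.
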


\begin{proof}
Suppose $\x$ is a point in $\R^d$ such that $\| \x - \xc \| > \frac{\tilL}{\tilmu} r_c$. In order to conclude that $\x^* \in \B \Big( \xc, \frac{\tilL}{\tilmu} r_c \Big)$, we will show that $\sum_{v_i \in \vr} \nabla f_i (\x) \neq \boldsymbol{0}$.

In the first step, we will show that $\cos \angle ( \nabla f_i (\x), \x - \xc ) > 0$ for all $v_i \in \vr$.
For a regular agent $v_i \in \vr$, consider the angle between the vectors $\x - \xc$ and $\x - \x_i^*$.
Suppose $r_c > 0$; otherwise, we have that $\angle ( \x - \xc, \x - \x_i^* ) = 0$. Using Lemma~A.1 from the arXiv version of \cite{kuwaran2024scalable}, we can bound the angle as follows:
\begin{equation}
    \angle ( \x - \xc, \x - \x_i^* )
    \leq \max_{\x_0 \in \B (\xc, r_c)} \angle ( \x - \xc, \x - \x_0 )  
    = \arcsin \Big( \frac{r_c}{\| \x - \xc \|} \Big)
    < \arcsin \Big( \frac{\tilmu}{\tilL} \Big).
    \label{eqn: angle bound 1}
\end{equation}
On the other hand, for a regular agent $v_i \in \vr$, since $f_i$ is $\mu_i$-strongly convex, from \eqref{def: strongly convex 2} we have that $\langle \nabla f_i (\x), \x - \x_i^* \rangle \geq \mu_i \| \x - \x_i^* \|^2$
which is equivalent to 
\begin{equation}
    \| \nabla f_i (\x) \| \cos \angle ( \nabla f_i (\x), \x - \x_i^* )
    \geq \mu_i \| \x - \x_i^* \|.
    \label{eqn: str cvx angle}
\end{equation}
Since $f_i$ has an $L_i$-Lipschitz gradient, from \eqref{def: lipschitz gradient 0} we have that $\| \nabla f_i (\x) \| \leq L_i \| \x - \x_i^* \|$.
Substitute this inequality into \eqref{eqn: str cvx angle} to obtain $\cos \angle ( \nabla f_i (\x), \x - \x_i^* ) \geq \frac{\mu_i}{L_i} \geq \frac{\tilmu}{\tilL}$
which implies that
\begin{equation}
    \angle ( \nabla f_i (\x), \x - \x_i^* )
    \leq \arccos \Big( \frac{\tilmu}{\tilL} \Big).
    \label{eqn: angle bound 2}
\end{equation}
Using \eqref{eqn: angle bound 1} and \eqref{eqn: angle bound 2}, we can bound the angle between the vectors $\nabla f_i (\x)$ and $\x - \xc$ as follows:
\begin{equation*}
    \angle ( \nabla f_i (\x), \x - \xc ) 
    \leq \angle ( \nabla f_i (\x), \x - \x_i^* ) + \angle ( \x - \xc, \x - \x_i^* )  
    < \arccos \Big( \frac{\tilmu}{\tilL} \Big) + \arcsin \Big( \frac{\tilmu}{\tilL} \Big)
    = \frac{\pi}{2},
\end{equation*}
where the first inequality is obtained from Corollary~12 in \cite{castano2016angles}. This means that $\cos \angle ( \nabla f_i (\x), \x - \xc ) > 0$ as desired.

In the second step, we will show that $\| \nabla f_i (\x) \| > 0$ for all $v_i \in \vr$.
For a regular agent $v_i \in \vr$, consider the lower bound of the gradient's norm $\| \nabla f_i (\x) \|$ in \eqref{eqn: str cvx angle} which implies that
\begin{equation*}
    \| \nabla f_i (\x) \| 
    \geq \mu_i \| \x - \x_i^* \|
    \geq \mu_i \big( \| \x - \xc \| - \| \x_i^* - \xc \| \big).
\end{equation*}
Since $\| \x - \xc \| > \frac{\tilL}{\tilmu} r_c$, the above inequality becomes
\begin{equation*}
    \| \nabla f_i (\x) \| 
    > \Big( \frac{\tilL}{\tilmu} 
    \max_{v_j \in \vr} \| \x_j^* - \xc  \| 
    - \| \x_i^* - \xc \| \Big)
    \geq 0,
\end{equation*}
where the second inequality is obtained by using $\tilL \geq \tilmu$.

In the last step, we will show that $\sum_{v_i \in \vr} \nabla f_i (\x) \neq \boldsymbol{0}$. 
Consider the following inner product
\begin{equation*}
    \Big\langle \sum_{v_i \in \vr} \nabla f_i (\x), \x - \xc \Big\rangle 
    = \| \x - \xc \| 
    \sum_{v_i \in \vr}
    \| \nabla f_i (\x) \| \cos \angle ( \nabla f_i (\x), \x - \xc ).
\end{equation*}
Since $\| \nabla f_i (\x) \| > 0$ and $\cos \angle ( \nabla f_i (\x), \x - \xc ) > 0$ for all $v_i \in \vr$, and $\| \x - \xc \| > 0$, we have that $\big\langle \sum_{v_i \in \vr} \nabla f_i (\x), \x - \xc \big\rangle > 0$.
This implies that $\sum_{v_i \in \vr} \nabla f_i (\x) \neq \boldsymbol{0}$ which completes the first part of the proof.

For the second part of the lemma, in order to conclude that $\x^* \in \B ( \xc, R^* )$, we will show that $\frac{\tilL}{\tilmu} r_c \leq R^*$, where $R^*$ is defined in \eqref{eqn: conv thm limit}. Since $\gamma \geq 1$, we have that 
\begin{equation*}
    R^* 
    \geq \frac{r_c \sqrt{\alpha \tilL}}{1 - \sqrt{1 - \alpha \tilmu}}.
\end{equation*}
Multiplying $1 + \sqrt{1 - \alpha \tilmu}$ to both the numerator and denominator of the RHS of the above inequality, we obtain that
\begin{equation*}
    R^* 
    \geq r_c \sqrt{\frac{\tilL}{\tilmu}} 
    \bigg( \frac{1}{\sqrt{\alpha \tilmu}} + \sqrt{ \frac{1}{\alpha \tilmu} - 1 } \bigg).
\end{equation*}
Since $\alpha \leq \frac{1}{\tilL}$ implies that $\frac{1}{\alpha \tilmu} \geq \frac{\tilL}{\tilmu}$, we can bound $R^*$ as follows:
\begin{equation*}
    R^* \geq r_c \sqrt{\frac{\tilL}{\tilmu}} \bigg( \sqrt{\frac{\tilL}{\tilmu}} + \sqrt{\frac{\tilL}{\tilmu} - 1} \bigg)
    = r_c \bigg( \frac{\tilL}{\tilmu} + \sqrt{ \frac{\tilL}{\tilmu} \Big( \frac{\tilL}{\tilmu} - 1 \Big) } \bigg).
\end{equation*}
Since $\tilL \geq \tilmu$, we obtain that $R^* \geq \frac{\tilL}{\tilmu} r_c$ which completes the second part of the proof.
\end{proof}
% SUPPLEMENT: CONSENSUS PROOFS
\section{Proof of Consensus Results in Subsection~\ref{subsec: consensus}}

\subsection{Bound on Gradients}
\label{subsec: grad bound}

As we have claimed in the main text, the states contraction property (Definition~\ref{def: contraction}) implies a bound on the gradient $\| \g_i [k] \|_\infty$. The following lemma formally illustrates this fact.

\begin{lemma}  \label{lem: grad bound}
Suppose Assumption~\ref{asm: convex} holds.
If an algorithm $A$ in R{\scriptsize{ED}}G{\scriptsize{RAF}} satisfies the $( \xc, \gamma, \{ c[k] \} )$-states contraction property (for some $\xc \in \R^d$, $\gamma \in \R_{\geq 0}$, and $\{ c[k] \}_{k \in \N} \subset \R$) and $\alpha_k = \alpha \in \big( 0, \frac{1}{\tilL} \big]$, then for all $k \in \N$ and $v_i \in \vr$,
\begin{multline}
    \| \g_i [k] \|_\infty
    \leq \tilL \sqrt{\gamma} \bigg[ ( \beta \sqrt{\gamma} )^k \max_{v_s \in \vr} \| \x_s [0] - \xc \|  \\
    + \beta \sum_{s=0}^{k-1} (\beta \sqrt{\gamma})^s c[k-s-1] 
    + \rc \sqrt{\alpha \tilL} \sum_{s=0}^{k-1} (\beta \sqrt{\gamma})^s \bigg] 
    + \tilL c[k] + \rc \tilL,
    \label{eqn: grad bound}
\end{multline}
where $\rc$ and $\beta$ are defined in \eqref{def: x_c distance} and \eqref{def: beta}, respectively. Furthermore, if $A$ satisfies the reduction property of Type-I or Type-II, then it holds that
\begin{equation}
    \limsup_k \; \| \g_i [k] \|_\infty 
    \leq \rc \tilL \bigg( 1 + \frac{\sqrt{\alpha \gamma \tilL}}{1 - \beta \sqrt{\gamma}} \bigg)
    \quad \text{for all} \quad v_i \in \vr.
    \label{eqn: limsup grad bound}
\end{equation}
\end{lemma}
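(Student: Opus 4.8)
The plan is to exploit the fact that $\g_i[k] = \nabla f_i(\Tilde{\x}_i[k])$ and that $\x_i^*$ is the minimizer of the $L_i$-Lipschitz-gradient function $f_i$, so that $\nabla f_i(\x_i^*) = \boldsymbol{0}$. First I would write $\| \g_i[k] \| = \| \nabla f_i(\Tilde{\x}_i[k]) - \nabla f_i(\x_i^*) \| \le L_i \| \Tilde{\x}_i[k] - \x_i^* \| \le \tilL \| \Tilde{\x}_i[k] - \x_i^* \|$, using the Lipschitz-gradient definition \eqref{def: lipschitz gradient 0} together with $L_i \le \tilL$. A triangle inequality then inserts the contraction center, $\| \Tilde{\x}_i[k] - \x_i^* \| \le \| \Tilde{\x}_i[k] - \xc \| + \| \xc - \x_i^* \| \le \| \Tilde{\x}_i[k] - \xc \| + \rc$, where the last step invokes the definition of $\rc$ in \eqref{def: x_c distance}. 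Combining these yields $\| \g_i[k] \| \le \tilL \big( \| \Tilde{\x}_i[k] - \xc \| + \rc \big)$.

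Next, I would apply the $(\xc, \gamma, \{ c[k] \})$-states contraction property \eqref{eqn: contraction def} to replace $\| \Tilde{\x}_i[k] - \xc \|$ by $\sqrt{\gamma} \max_{v_j \in \vr} \| \x_j[k] - \xc \| + c[k]$, giving $\| \g_i[k] \| \le \tilL \sqrt{\gamma} \max_{v_j \in \vr} \| \x_j[k] - \xc \| + \tilL c[k] + \rc \tilL$. The maximum distance on the right is then bounded directly by the recursive estimate \eqref{eqn: conv thm} already established in Lemma~\ref{appx-lem: convergence} (whose hypotheses, namely Assumption~\ref{asm: convex} and $\alpha \in (0, \tfrac{1}{\tilL}]$, coincide with those assumed here); substituting that bound produces precisely the per-iteration estimate \eqref{eqn: grad bound}. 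Since the Euclidean norm dominates the $\infty$-norm, i.e. $\| \g_i[k] \|_\infty \le \| \g_i[k] \|$, the same bound transfers verbatim to $\| \g_i[k] \|_\infty$.

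For the asymptotic bound \eqref{eqn: limsup grad bound}, I would take $\limsup_k$ of \eqref{eqn: grad bound} under the reduction property. Lemma~\ref{lem: reduction} guarantees $\beta \sqrt{\gamma} < 1$, so $(\beta \sqrt{\gamma})^k \to 0$ and $\sum_{s=0}^{k-1} (\beta \sqrt{\gamma})^s \to (1 - \beta \sqrt{\gamma})^{-1}$. The convolution term $\beta \sum_{s=0}^{k-1} (\beta \sqrt{\gamma})^s c[k-s-1]$ vanishes by Corollary~\ref{cor: lim sum of prod}, since $c[k] \to 0$ and $\sum_s (\beta \sqrt{\gamma})^s$ is finite, and likewise $\tilL c[k] \to 0$. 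What survives is $\tilL \sqrt{\gamma} \cdot \tfrac{\rc \sqrt{\alpha \tilL}}{1 - \beta \sqrt{\gamma}} + \rc \tilL$; factoring out $\rc \tilL$ and using $\sqrt{\gamma} \sqrt{\alpha \tilL} = \sqrt{\alpha \gamma \tilL}$ gives the claimed expression $\rc \tilL \big( 1 + \tfrac{\sqrt{\alpha \gamma \tilL}}{1 - \beta \sqrt{\gamma}} \big)$.

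Since the argument is essentially a chaining of results already in hand, I do not expect a genuine conceptual obstacle; the points requiring care are verifying that the hypotheses of Lemma~\ref{appx-lem: convergence} and Corollary~\ref{cor: lim sum of prod} hold exactly as stated, handling the $\infty$-norm versus Euclidean-norm step cleanly, and correctly collapsing the surviving constants (in particular the algebraic simplification $\sqrt{\gamma} \sqrt{\alpha \tilL} = \sqrt{\alpha \gamma \tilL}$).
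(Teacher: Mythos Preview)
Your proposal is correct and follows essentially the same approach as the paper: bound $\|\g_i[k]\|$ via the Lipschitz-gradient property and a triangle inequality through $\xc$, apply the states contraction property, then substitute the recursive estimate \eqref{eqn: conv thm} from Lemma~\ref{appx-lem: convergence}. For the $\limsup$ part, the paper takes the $\limsup$ of the intermediate inequality $\|\g_i[k]\|_\infty \le \tilL\sqrt{\gamma}\max_{v_j}\|\x_j[k]-\xc\| + \tilL c[k] + \rc\tilL$ and invokes \eqref{eqn: conv thm limit} directly, whereas you take the $\limsup$ of the fully expanded bound \eqref{eqn: grad bound} and evaluate each term; both routes are equivalent and yield the same result.
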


\begin{proof}
Consider a time-step $k \in \N$, and a regular agent $v_i \in \vr$. We can write 
\begin{equation*}
    \| \Tilde{\x}_i [k] - \x_i^* \| \leq \| \Tilde{\x}_i [k] - \xc \| + \| \x_i^* - \xc \|.
\end{equation*}
Since the algorithm $A$ satisfies the $( \xc, \gamma, \{ c[k] \} )$-states contraction property, applying \eqref{eqn: contraction def} to the above inequality yields
\begin{equation}
    \| \Tilde{\x}_i [k] - \x_i^* \|
    \leq \sqrt{\gamma} \max_{v_j \in \vr} \| \x_j [k] - \xc \| + c[k] + \rc,
    \label{eqn: base distance}
\end{equation}
where $\rc$ is defined in \eqref{def: x_c distance}. Since $\g_i [k] = \nabla f_i ( \Tilde{\x}_i [k] )$, using Assumption~\ref{asm: convex} we can write
\begin{equation*}
    \| \g_i [k] \|
    = \| \nabla f_i ( \Tilde{\x}_i [k] ) - \nabla f_i ( \x_i^* ) \|
    \leq \Tilde{L} \| \Tilde{\x}_i [k] - \x_i^* \|.
\end{equation*}
Substituting \eqref{eqn: base distance} into the above inequality and using the fact that $\| \g_i [k] \|_\infty \leq \| \g_i [k] \|$, we obtain that
\begin{equation}
    \| \g_i [k] \|_\infty \leq \tilL \sqrt{\gamma} \max_{v_j \in \vr} \| \x_j [k] - \xc \| + \tilL c[k] + \rc \tilL.
    \label{eqn: individual grad bound}
\end{equation}
Substituting \eqref{eqn: conv thm} from Lemma~\ref{appx-lem: convergence} into the above inequality yields \eqref{eqn: grad bound}.

To show the second part of the lemma, taking $\limsup_k$ to both sides of \eqref{eqn: individual grad bound}, we have that
\begin{equation*}
    \limsup_k \| \g_i [k] \|_\infty 
    \leq \tilL \sqrt{\gamma} \; \limsup_k \max_{v_j \in \vr} \| \x_j [k] - \xc \| 
    + \tilL \lim_{k \to \infty} c[k] + \rc \tilL.
\end{equation*}
Using \eqref{eqn: conv thm limit} from Proposition~\ref{prop: convergence} and $\lim_{k \to \infty} c[k] = 0$ yields the result \eqref{eqn: limsup grad bound}.
\end{proof}

\subsection{Proof of Proposition~\ref{prop: consensus}}
\label{subsec: proof of consensus proposition}

Here, we present a more general version of Proposition~\ref{prop: consensus} which will be used to prove Theorem~\ref{thm: consensus}.

\begin{proposition}  \label{appx-prop: consensus}
If an algorithm $A$ in R{\scriptsize{ED}}G{\scriptsize{RAF}} satisfies the $( \{ \W^{(\ell)} [k] \}, G )$- mixing dynamics property (for some $\{ \W^{(\ell)}[k] \}_{k \in \N, \; \ell \in [d]} \subset \mathbb{S}^{| \vr |}$ and $G \in \R_{\geq 0}$) and $\alpha_k = \alpha$ for all $k \in \N$, then there exist $\rho \in \R_{\geq 0}$ and $\lambda \in (0, 1)$ such that for all $k \in \N$ and $v_i, v_j \in \vr$, it holds that
\begin{equation}
    \| \x_i [k] - \x_j [k] \| 
    \leq \rho \sqrt{d} \bigg( \lambda^k \max_{v_r \in \vr} \| \x_r[0] \|_\infty  
    + \alpha \sum_{s=0}^{k-1} \lambda^{k-s-1} \max_{v_r \in \vr} \| \g_r [s] \|_{\infty} \bigg).
    \label{eqn: approx consensus}
\end{equation}
Furthermore, there exist $\rho \in \R_{\geq 0}$ and $\lambda \in (0, 1)$ such that 
\begin{equation}
    \limsup_k \| \x_i [k] - \x_j [k] \| \leq \frac{\alpha \rho G \sqrt{d}}{1 - \lambda}
    \quad \text{for all} \quad v_i, v_j \in \vr.
    \label{appx eqn: lim approx consensus}
\end{equation}
\end{proposition}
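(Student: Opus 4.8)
The plan is to decouple \eqref{def: perturbed mixing} coordinate-by-coordinate, treat each coordinate dynamics as a linear time-varying consensus recursion perturbed by the scaled gradients, and recombine the $d$ coordinates at the very end. First I would unfold the recursion. Defining the backward transition matrices $\Phil(k,s) := \W^{(\ell)}[k-1]\,\W^{(\ell)}[k-2] \cdots \W^{(\ell)}[s]$ for $k > s$ and $\Phil(k,k) := \I$, the recursion \eqref{def: perturbed mixing} telescopes into
\[
\x^{(\ell)}[k] = \Phil(k,0)\,\x^{(\ell)}[0] - \alpha \sum_{s=0}^{k-1} \Phil(k,s+1)\,\g^{(\ell)}[s].
\]
Each $\W^{(\ell)}[k] \in \S^{|\vr|}$ is row-stochastic, so every $\Phil(k,s)$ is row-stochastic as well. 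Writing $\boldsymbol{e}_i$ for the unit vector associated with agent $v_i$, the gap between the $\ell$-th components of two regular agents is the difference of the corresponding rows of these transition matrices applied to the data,
\[
x_i^{(\ell)}[k] - x_j^{(\ell)}[k] = (\boldsymbol{e}_i - \boldsymbol{e}_j)^T \Phil(k,0)\,\x^{(\ell)}[0] - \alpha \sum_{s=0}^{k-1} (\boldsymbol{e}_i - \boldsymbol{e}_j)^T \Phil(k,s+1)\,\g^{(\ell)}[s].
\]

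The crux is a geometric bound on the row differences $\| (\boldsymbol{e}_i - \boldsymbol{e}_j)^T \Phil(k,s) \|_1$, and this is precisely where the repeatedly jointly rooted hypothesis on $\{ \mathbb{G}(\W^{(\ell)}[k]) \}$ is used. Invoking the consensus results of \cite{cao2008reaching}: since each length-$q$ block of graphs composes to a rooted graph, the product of the associated stochastic matrices over that block possesses a scrambling (ergodicity) structure, so its coefficient of ergodicity is bounded strictly below $1$ by a constant governed by $|\vr|$ and $q$. Submultiplicativity of the ergodicity coefficient across blocks then yields $\rho_\ell \in \R_{\geq 0}$ and $\lambda_\ell \in (0,1)$ with $\| (\boldsymbol{e}_i - \boldsymbol{e}_j)^T \Phil(k,s) \|_1 \leq \rho_\ell \lambda_\ell^{\,k-s}$ for all $k \geq s$ and all $v_i, v_j \in \vr$. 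Because there are only finitely many coordinates, I would set $\lambda := \max_{\ell \in [d]} \lambda_\ell \in (0,1)$ and $\rho := \max_{\ell \in [d]} \rho_\ell$ to obtain a single uniform rate and constant.

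Combining the two steps via the coordinatewise Hölder estimate $| (\boldsymbol{e}_i - \boldsymbol{e}_j)^T \Phil(k,s)\,\u | \leq \| (\boldsymbol{e}_i - \boldsymbol{e}_j)^T \Phil(k,s) \|_1 \, \| \u \|_\infty$ (noting that the $\x^{(\ell)}[0]$ and $\g^{(\ell)}[s]$ entries are coordinates of the regular agents' vectors) gives, uniformly in $\ell$,
\[
| x_i^{(\ell)}[k] - x_j^{(\ell)}[k] | \leq \rho \lambda^k \max_{v_r \in \vr} \| \x_r[0] \|_\infty + \alpha \rho \sum_{s=0}^{k-1} \lambda^{k-s-1} \max_{v_r \in \vr} \| \g_r[s] \|_\infty.
\]
Finally, the elementary inequality $\| \x_i[k] - \x_j[k] \| \leq \sqrt{d}\, \max_{\ell \in [d]} | x_i^{(\ell)}[k] - x_j^{(\ell)}[k] |$ introduces the factor $\sqrt{d}$ and delivers \eqref{eqn: approx consensus}. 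For the asymptotic bound \eqref{appx eqn: lim approx consensus}, the leading term $\rho \sqrt{d}\, \lambda^k \max_{v_r} \| \x_r[0] \|_\infty$ vanishes as $k \to \infty$ since $\lambda \in (0,1)$. For the convolution term I would apply Lemma~\ref{lem: limsup liminf sum of prod} with $a[s] := \max_{v_r \in \vr} \| \g_r[s] \|_\infty$ and $b[t] := \lambda^t$; here $\sum_{t=0}^\infty b[t] = 1/(1-\lambda)$ is finite, and $\limsup_k a[k] \leq G$ follows from the mixing dynamics bound together with the finiteness of $\vr$ (the $\limsup$ commutes with the finite maximum). This yields $\limsup_k \| \x_i[k] - \x_j[k] \| \leq \alpha \rho G \sqrt{d} / (1-\lambda)$, as claimed.

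I expect the main obstacle to be the second step: extracting a single uniform geometric contraction rate $\lambda$ from the repeatedly jointly rooted condition. The individual mixing matrices $\W^{(\ell)}[k]$ need not contract toward consensus on their own — only products over length-$q$ blocks scramble — so the argument must be organized around blocks and lean carefully on the ergodicity-coefficient estimates of \cite{cao2008reaching}, while also guaranteeing that the rate and constant can be chosen uniformly across the finitely many coordinates $\ell \in [d]$.
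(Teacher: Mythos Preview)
Your proposal is correct and follows essentially the same route as the paper: unfold \eqref{def: perturbed mixing} via transition products, invoke the geometric ergodicity estimate from \cite{cao2008reaching} for repeatedly jointly rooted sequences, bound each coordinate uniformly, pick up the $\sqrt{d}$ factor, and apply Lemma~\ref{lem: limsup liminf sum of prod} for the $\limsup$ part. The only cosmetic difference is that the paper introduces the reference vector $\bar{\x}^{(\ell)}[k] = \boldsymbol{1}\boldsymbol{q}^{(\ell)T}[k]\x^{(\ell)}[k]$ and bounds $\|\Phil[k-1,s] - \boldsymbol{1}\boldsymbol{q}^{(\ell)T}[s]\|_\infty$ (then uses the triangle inequality through $\bar{\x}$, incurring a factor of~$2$ absorbed into $\rho$), whereas you bound the row difference $\|(\boldsymbol{e}_i-\boldsymbol{e}_j)^T\Phil(k,s)\|_1$ directly via the ergodicity coefficient; these two formulations are equivalent consequences of the same result in~\cite{cao2008reaching}.
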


\begin{proof}
Consider a time-step $k \in \Z_+$ and a dimension $\ell \in [d]$. Since the algorithm $A$ satisfies the $( \{ \W^{(\ell)} [k] \}, G )$-mixing dynamics property in \eqref{def: perturbed mixing}, we have that
\begin{equation}
    \x^{(\ell)} [k]
    = \W^{(\ell)} [k-1] \x^{(\ell)} [k-1] - \alpha_{k-1} \g^{(\ell)} [k-1].
    \label{eqn: mixing dynamics}
\end{equation}
For $s, t \in \N$, let
\begin{equation*}
    \Phil [t,s] = \begin{cases}
    \W^{(\ell)} [t] \W^{(\ell)} [t-1] \cdots \W^{(\ell)} [s] &\text{if} \;\; t \geq s, \\
    \I &\text{if} \;\; t < s.
\end{cases}
\end{equation*}
We can expand \eqref{eqn: mixing dynamics} as follows:
\begin{equation}
    \x^{(\ell)} [k] = \Phil [k-1, 0] \x^{(\ell)} [0] 
    - \sum_{s=0}^{k-1} \alpha_s \Phil [k-1, s+1] \g^{(\ell)} [s].
    \label{eqn: mixing dynamics expanded}
\end{equation}
Let $\boldsymbol{q}^{(\ell)} (s) \in \R^{| \vr |}$ be such that 
$\lim_{t \to \infty} \Phil [t, s] = \boldsymbol{1} \boldsymbol{q}^{(\ell) T} [s]$, 
and let $\Bar{\x}^{(\ell)} [k] = \boldsymbol{1} \boldsymbol{q}^{(\ell) T} [k] \x^{(\ell)} [k]$. We can write
\begin{equation*}
    \| \x^{(\ell)} [k] - \Bar{\x}^{(\ell)} [k] \|_\infty
    = \Big\| \big( \I - \boldsymbol{1} \boldsymbol{q}^{(\ell)T} [k] \big) \x^{(\ell)} [k] \Big \|_\infty.
\end{equation*}
Applying \eqref{eqn: mixing dynamics expanded} to the above equation, we obtain that
\begin{multline}
    \| \x^{(\ell)} [k] - \Bar{\x}^{(\ell)} [k] \|_\infty 
    \leq \big\| \Phil [k-1, 0] 
    - \boldsymbol{1} \boldsymbol{q}^{(\ell)T} [0] \big\|_\infty
    \| \x^{(\ell)} [0] \|_\infty  \\
    + \sum_{s=0}^{k-1} \Big( \alpha_s \big\| \Phil [k-1, s+1]
    - \boldsymbol{1} \boldsymbol{q}^{(\ell)T} [s+1] \big\|_\infty 
    \| \g^{(\ell)} [s] \|_\infty \Big).
    \label{eqn: expanded diff}
\end{multline}
From Proposition~1 in \cite{cao2008reaching}, we have that there exist constants $\rho' \in \R_{\geq 0}$ and $\lambda \in (0, 1)$ such that for all $k > s \geq 0$,
\begin{equation*}
    \Big\| \Phil [k-1, s] - \boldsymbol{1} \boldsymbol{q}^{(\ell)T} [s] \Big\|_\infty
    \leq \rho' \lambda^{k - s}.
\end{equation*}
Thus, applying the above inequality, \eqref{eqn: expanded diff} can be bounded as
\begin{equation}
    \| \x^{(\ell)} [k] - \Bar{\x}^{(\ell)} [k] \|_{\infty}
    \leq \rho' \lambda^k \| \x^{(\ell)} [0] \|_{\infty}  
    + \sum_{s=0}^{k-1} \alpha_s \rho' \lambda^{k-s-1} \| \g^{(\ell)} [s] \|_{\infty}.
    \label{eqn: diff infty}
\end{equation}
Since $\alpha_s = \alpha$ for all $s \in \N$, and for $s \in \N$, $\ell \in [d]$, and $\z^{(\ell)} [s] = \x^{(\ell)} [s]$ or $\g^{(\ell)} [s]$,
\begin{equation*}
    \| \z^{(\ell)} [s] \|_{\infty} 
    \leq \max_{\ell \in [d]} \max_{v_i \in \vr} | z_i^{(\ell)} [s] |
    = \max_{v_i \in \vr} \| \z_i[s] \|_\infty,
\end{equation*}
the inequality \eqref{eqn: diff infty} becomes 
\begin{equation}
    \| \x^{(\ell)} [k] - \Bar{\x}^{(\ell)} [k] \|_{\infty}
    \leq \rho' \lambda^k \max_{v_i \in \vr} \| \x_i[0] \|_\infty 
    + \alpha \rho' \sum_{s=0}^{k-1} \lambda^{k-s-1} \max_{v_i \in \vr} \| \g_i [s] \|_{\infty}.
    \label{eqn: diff infty 2}
\end{equation}
Let $\Bar{x}^{(\ell)} [k] = \boldsymbol{q}^{(\ell)T} [k] \x^{(\ell)} [k]$. For $v_i \in \vr$, we can write
\begin{equation*}
    \| \x_i [k] - \Bar{\x} [k] \|
    = \sqrt{\sum_{\ell \in [d]} | x_i^{(\ell)} [k] - \bar{x}^{(\ell)} [k] |^2} 
    \leq \sqrt{\sum_{\ell \in [d]} \| \x^{(\ell)} [k] - \bar{\x}^{(\ell)} [k] \|^2_\infty}.
\end{equation*}
Using the above inequality, we have that for all $v_i, v_j \in \vr$,
\begin{equation*}
    \| \x_i [k] - \x_j [k] \| 
    \leq \| \x_i [k] - \Bar{\x} [k] \| + \| \x_j [k] - \Bar{\x} [k] \|  
    \leq 2 \sqrt{\sum_{\ell \in [d]} \| \x^{(\ell)} [k] - \bar{\x}^{(\ell)} [k] \|^2_\infty}.
\end{equation*}
Substituting \eqref{eqn: diff infty 2} into the above inequality and letting $\rho = 2 \rho'$, we obtain the result \eqref{eqn: approx consensus}. Taking $\limsup_k$ to both sides of \eqref{eqn: approx consensus}, we have
\begin{equation*}
    \limsup_k \| \x_i [k] - \x_j [k] \|  
    \leq \alpha \rho \sqrt{d} \; \limsup_k \sum_{s=0}^{k-1} \lambda^{k-s-1} \max_{v_r \in \vr} \| \g_r [s] \|_{\infty}.
\end{equation*}
Since for all $v_r \in \vr$, we have $\limsup_k \| \g_r [k] \|_{\infty} \leq G$ from Definition~\ref{def: mixing} and $\lim_{k \to \infty} \sum_{s=0}^k \lambda^k = \frac{1}{1 - \lambda}$, using Lemma~\ref{lem: limsup liminf sum of prod}, the above inequality becomes \eqref{appx eqn: lim approx consensus}.
\end{proof}

\subsection{Proof of Theorem~\ref{thm: consensus}}
\label{subsec: consensus theorem}

\begin{proof}[Proof of Theorem~\ref{thm: consensus}]
From the inequality \eqref{eqn: limsup grad bound} in Lemma~\ref{lem: grad bound}, we have that the algorithm $A$ satisfies the $( \{ \W^{(\ell)} [k] \}, G )$-mixing dynamics property with
\begin{equation*}
    G = \rc \tilL \bigg( 1 + \frac{\sqrt{\alpha \gamma \tilL}}{1 - \beta \sqrt{\gamma}} \bigg).
\end{equation*}  
Substituting $G$ into \eqref{eqn: lim approx consensus} in Proposition~\ref{prop: consensus} yields the result \eqref{eqn: cor lim approx consensus}.

To show the second part of the theorem, consider the expression in the square bracket of \eqref{eqn: grad bound}. Since $c[k] = \O(\xi^k)$ and $\xi \in (0, 1) \setminus \{ \beta \sqrt{\gamma} \}$, we have that
\begin{equation*}
    ( \beta \sqrt{\gamma} )^k \max_{v_s \in \vr} \| \x_s [0] - \xc \| + \beta \sum_{s=0}^{k-1} (\beta \sqrt{\gamma})^s c[k-s-1] 
    + \rc \sqrt{\alpha \tilL} \sum_{s=0}^{k-1} (\beta \sqrt{\gamma})^s 
    \leq R^* + \O \big( ( \max \{ \beta \sqrt{\gamma}, \xi \} )^k \big),
\end{equation*}
where $R^*$ is defined in \eqref{eqn: conv thm limit}. Substituting the above inequality into \eqref{eqn: grad bound}, we obtain that for all $v_i \in \vr$,
\begin{align*}
    \| \g_i [k] \|_\infty 
    &\leq \tilL \sqrt{\gamma} \Big[ R^* + \O \big( ( \max \{ \beta \sqrt{\gamma}, \xi \} )^k \big) \Big] 
    + \O(\xi^k) + \rc \tilL  \\
    &= R^* \tilL \sqrt{\gamma} + \rc \tilL + \O \big( ( \max \{ \beta \sqrt{\gamma}, \xi \} )^k \big).
\end{align*}
Substituting the above inequality into \eqref{eqn: approx consensus} in Proposition~\ref{appx-prop: consensus}, we have that there exist $\rho \in \R_{\geq 0}$ and $\lambda \in (0, 1)$ such that for all $v_i, v_j \in \vr$,
\begin{equation}
    \| \x_i [k] - \x_j [k] \| 
    \leq \rho \sqrt{d} \bigg[ \O (\lambda^k) + \frac{\alpha \tilL}{1 - \lambda} (\rc + R^* \sqrt{\gamma})  
    + \alpha \sum_{s=0}^{k-1} \lambda^{k-s-1} \O \big( ( \max \{ \beta \sqrt{\gamma}, \xi \} )^s \big) \bigg].
    \label{eqn: consensus order temp}
\end{equation}
If $\lambda = \max \{ \beta \sqrt{\gamma}, \xi \}$, we can replace $\lambda$ in \eqref{eqn: consensus order temp} with $\lambda' = \lambda + \epsilon$, where $\epsilon \in (0, 1 - \lambda)$. Thus, without loss of generality, there exist $\rho \in \R_{\geq 0}$ and $\lambda \in (0, 1) \setminus \{ \max \{ \beta \sqrt{\gamma}, \xi \} \}$ such that for all $v_i, v_j \in \vr$, the inequality \eqref{eqn: consensus order temp} holds. 
Consider the last term of \eqref{eqn: consensus order temp}. Since $\lambda \neq \max \{ \beta \sqrt{\gamma}, \xi \}$, we have that
\begin{equation*}
    \sum_{s=0}^{k-1} \lambda^{k-s-1} \O \big( ( \max \{ \beta \sqrt{\gamma}, \xi \} )^s \big) 
    = \O \big( ( \max \{ \beta \sqrt{\gamma}, \xi, \lambda \} )^k \big).
\end{equation*}
Substituting the above equality into \eqref{eqn: consensus order temp} yields the result \eqref{eqn: cor approx consensus}.
\end{proof}
% SUPPLEMENT: ALGORITHMS PROOFS
\section{Proof of Algorithms Results in Subsection~\ref{subsec: algorithms}}

\subsection{Proof of Theorem~\ref{thm: contraction}}
\label{subsec: proof of algorithms}

Before proving Theorem~\ref{thm: contraction}, we consider a property of SDMMFD and SDFD \cite{kuwaran2024scalable}. Specifically, for SDMMFD and SDFD, since the dynamics of the estimated auxiliary points $\{ \y_i[k] \}_{\vr} \subset \R^d$ are independent of the dynamics of the estimated solutions $\{ \x_i[k] \}_{\vr} \subset \R^d$, we begin by presenting the convergence results of the estimated auxiliary points $\{ \y_i[k] \}_{\vr}$ from Proposition~1 in \cite{kuwaran2024scalable}.

\begin{lemma} \label{lem: aux convergence}
Suppose the set of estimated auxiliary points $\{ \y_i[k] \}_{\vr}$ follow SDMMFD or SDFD \cite{kuwaran2024scalable}.
Suppose Assumption~\ref{asm: robust} hold, the graph $\mathcal{G}$ is $(2F + 1)$-robust, and the weights $w^{(\ell)}_{ij} [k]$ satisfy Assumption~\ref{asm: weight matrices}.
Then, there exists $c_1 \in \R_{>0}$, $c_2 \in \R_{\geq 0}$, and $\y[\infty] \in \R^d$ with 
\begin{equation}
    y^{(\ell)}[\infty] 
    \in \Big[ \min_{v_i \in \vr} y_i^{(\ell)}[k],
    \max_{v_i \in \vr} y_i^{(\ell)}[k] \Big]
    \label{eqn: asymptotic aux}
\end{equation}
for all $k \in \N$ and $\ell \in [d]$ such that for all $v_i \in \vr$, we have
\begin{equation}
    \| \y_i[k] - \y[\infty] \| < c_1 e^{- c_2 k}.
    \label{eqn: aux exp conv}
\end{equation}
\end{lemma}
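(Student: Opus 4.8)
The plan is to exploit the fact, noted in the main text, that for SDMMFD and SDFD the auxiliary update in \eqref{eqn: gradient step} is simply $\y_i[k+1] = \Tilde{\y}_i[k]$, so the auxiliary variables evolve under a purely gradient-free, coordinate-wise resilient consensus scheme that is completely decoupled from the main states $\{ \x_i[k] \}$. Concretely, each regular agent $v_i$ forms $\Tilde{\y}_i[k]$ by applying \texttt{cw\_mm\_filt} (discarding, in each coordinate $\ell$, the $F$ largest and $F$ smallest received values) followed by \texttt{cw\_average} with weights bounded below by $\omega$ (Assumption~\ref{asm: weight matrices}). The whole argument therefore reduces to proving geometric consensus for this scalar coordinate-wise trimmed-mean recursion, one coordinate at a time.

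The key steps proceed as follows. First I would invoke the standard safety property of coordinate-wise trimming under the $F$-local model (Assumption~\ref{asm: robust}): since each regular agent removes the $F$ extreme values from each side, every value surviving the filter — even one injected by a Byzantine neighbor — lies between the current minimum and maximum of the \emph{regular} agents' values in that coordinate. This lets me rewrite the regular agents' dynamics in each coordinate as
\begin{equation*}
\y^{(\ell)}[k+1] = \W^{(\ell)}[k]\, \y^{(\ell)}[k],
\end{equation*}
where each $\W^{(\ell)}[k] \in \S^{|\vr|}$ is row-stochastic, has diagonal entries at least $\omega$, and whose off-diagonal entries corresponding to retained regular neighbors are also at least $\omega$. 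Writing $M^{(\ell)}[k] = \max_{v_i \in \vr} y_i^{(\ell)}[k]$ and $m^{(\ell)}[k] = \min_{v_i \in \vr} y_i^{(\ell)}[k]$, row-stochasticity immediately forces $M^{(\ell)}[k]$ to be non-increasing and $m^{(\ell)}[k]$ to be non-decreasing. Together with the existence of a common limit (established next), this squeeze yields the interval-containment claim \eqref{eqn: asymptotic aux}, since for each fixed $k$ we have $m^{(\ell)}[k] \leq y^{(\ell)}[\infty] \leq M^{(\ell)}[k]$.

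It remains to establish geometric contraction of the diameter $M^{(\ell)}[k] - m^{(\ell)}[k]$, and this is where the robustness hypothesis enters. Trimming discards at most $2F$ neighbors and the $F$-local model contributes at most $F$ Byzantine in-neighbors; since $\G$ is $(2F+1)$-robust, Lemma~\ref{lem: rooted subgraph2} (applied with $r = 2F+1$, removing at most $2F$ incoming edges per node) guarantees that the graph $\mathbb{G}(\W^{(\ell)}[k])$ induced on the regular agents is rooted at every step. The main obstacle is passing from per-step rootedness to a \emph{uniform} geometric contraction: a single rooted stochastic matrix need not contract in one step, so one must show that any product of the $\W^{(\ell)}[k]$ over a window of length $T$ bounded in terms of $|\vr|$ possesses a strictly positive column (i.e.\ is scrambling), with that positivity uniformly lower-bounded by a fixed power of $\omega$. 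This yields a contraction factor $\eta \in [0,1)$ per window, hence $M^{(\ell)}[k] - m^{(\ell)}[k] \leq D_0\, \eta^{\lfloor k/T \rfloor}$ for a suitable $D_0$; the nested, geometrically shrinking intervals then force $y_i^{(\ell)}[k]$ to a common limit $y^{(\ell)}[\infty]$ at an exponential rate. Collecting coordinates and choosing the constants appropriately produces $c_1 > 0$ and $c_2 \geq 0$ with $\| \y_i[k] - \y[\infty] \| < c_1 e^{-c_2 k}$.

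In practice this contraction argument is exactly the content of Proposition~1 in \cite{kuwaran2024scalable}, from which the lemma follows directly once the hypotheses — $(2F+1)$-robustness, the $F$-local model, and $\omega$-lower-bounded weights — are matched to its assumptions. The only additional work is the bookkeeping above verifying that the auxiliary dynamics of both SDMMFD and SDFD genuinely fall within its scope.
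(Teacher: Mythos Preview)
Your proposal is correct and matches the paper's approach: the paper does not prove this lemma at all but simply imports it verbatim as Proposition~1 from \cite{kuwaran2024scalable}, exactly as you identify in your final paragraph. Your additional sketch of the underlying coordinate-wise trimmed-mean consensus argument (safety of trimming under the $F$-local model, rewriting as row-stochastic dynamics on $\vr$, monotonicity of coordinate-wise extrema, and rootedness via Lemma~\ref{lem: rooted subgraph2} leading to geometric contraction over a bounded window) is sound and goes beyond what the paper itself provides.
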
 %\vspace{2.0mm}
Essentially, the lemma above shows that the estimated auxiliary points $\{ \y_i[k] \}_{\vr}$ converge exponentially fast to a single point called $\y[\infty] \in \R^d$. 

We now provide a proof of Theorem~\ref{thm: contraction}

\begin{proof}[Proof of Theorem~\ref{thm: contraction}]
We first show that each algorithm satisfies the states contraction property with some particular quantities.

Consider a regular agent $v_i \in \vr$ following SDMMFD or SDFD. From Lemma~2 in \cite{kuwaran2024scalable}, we have that 
\begin{equation}
    \| \Tilde{\x}_i [k] - \y_i [k] \| 
    \leq \max_{v_j \in \vr} \| \x_j [k] - \y_i [k] \|.
    \label{eqn: lem2 prev paper}
\end{equation}
From Lemma~\ref{lem: aux convergence}, the limit point $\y [\infty] \in \R^d$ exists, and we can write
\begin{equation*}
    \| \x_j [k] - \y_i [k] \|
    \leq \| \x_j [k] - \y [\infty] \| + \| \y [\infty] - \y_i [k] \|.
\end{equation*}
Substitute the above inequality into \eqref{eqn: lem2 prev paper} to get
\begin{equation}
    \| \Tilde{\x}_i [k] - \y_i [k] \| 
    \leq \max_{v_j \in \vr} \| \x_j [k] 
    - \y [\infty] \| + \| \y [\infty] - \y_i [k] \|. 
    \label{eqn: local z bound}
\end{equation}
Thus, we can write
\begin{align*}
    \| \Tilde{\x}_i [k] - \y [\infty] \| 
    &\leq \| \Tilde{\x}_i [k] - \y_i [k] \| 
    + \| \y_i [k] - \y [\infty] \|  \\
    &\leq \max_{v_j \in \vr} \| \x_j [k] 
    - \y [\infty] \| + 2 \| \y_i [k] - \y [\infty] \|,
\end{align*}
where the last inequality comes from substituting \eqref{eqn: local z bound}. Let $c_1 \in \R_{>0}$ and $c_2 \in \R_{\geq 0}$ be the constants given in Lemma~\ref{lem: aux convergence}. By directly applying \eqref{eqn: aux exp conv} to $\| \y_i [k] - \y [\infty] \|$ in the above inequality, we conclude that the algorithms satisfy $(\y [\infty], 1, \{ 2c_1 e^{-c_2k} \})$-states contraction property.

Consider a regular agent $v_i \in \vr$ following CWTM. Since $v_i$ has at least $2F + 1$ in-neighbors, from Proposition~5.1 in \cite{sundaram2018distributed}, we have
\begin{equation*}
    \Tilde{x}^{(\ell)}_i [k]
    = \sum_{v_j \in (\mathcal{N}^{\text{in}}_i \cap \vr) \cup \{ v_i \}}
    \Tilde{w}^{(\ell)}_{ij} [k]  x^{(\ell)}_j [k],
\end{equation*}
where $\sum_{v_j \in (\mathcal{N}^{\text{in}}_i \cap \vr) \cup \{ v_i \}} \Tilde{w}^{(\ell)}_{ij} [k] = 1$. Thus, we can write
\begin{equation*}
    | \Tilde{x}^{(\ell)}_i [k] - c^{* (\ell)} | 
    \leq \sum_{v_j \in (\mathcal{N}^{\text{in}}_i \cap \vr) \cup \{ v_i \}}
    \Tilde{w}^{(\ell)}_{ij} [k] \; | x^{(\ell)}_j [k] - c^{* (\ell)} |  
    \leq \max_{v_j \in \vr}
    | x^{(\ell)}_j [k] - c^{* (\ell)} |.
\end{equation*}
Using the above inequality, we obtain that 
\begin{equation*}
    \| \Tilde{\x}_i [k] - \c^* \|^2
    = \sum_{\ell \in [d]} | \Tilde{x}^{(\ell)}_i [k] - c^{* (\ell)} |^2 
    \leq \sum_{\ell \in [d]} \max_{v_j \in \vr} 
    | x^{(\ell)}_j [k] - c^{* (\ell)} |^2 
    \leq \sum_{\ell \in [d]} \max_{v_j \in \vr} 
    \| \x_j [k] - \c^* \|^2.
\end{equation*}
Thus, we have that $\| \Tilde{\x}_i [k] - \c^* \| \leq \sqrt{d} \; \max_{v_j \in \vr} \| \x_j [k] - \c^* \|$ which corresponds to the $(\c^*, d, \{ 0[k] \})$-states contraction property in Definition~\ref{def: contraction}.

Finally, consider a regular agent $v_i \in \vr$ following RVO. From \eqref{eqn: convex comb}, we can write
\begin{equation*}
    \Tilde{\x}_i [k] - \c^*
    = \sum_{v_j \in ( \mathcal{N}_i^{\text{in}} \cap \vr ) \cup \{ v_i \}} 
    w_{ij} [k] \; ( \x_j [k] - \c^* ).
\end{equation*}
Since $\sum_{v_j \in ( \mathcal{N}_i^{\text{in}} \cap \vr ) \cup \{ v_i \}} w_{ij} [k] = 1$, we have
\begin{equation*}
    \| \Tilde{\x}_i [k] - \c^* \|
    \leq \sum_{v_j \in ( \mathcal{N}_i^{\text{in}} \cap \vr ) \cup \{ v_i \}} 
    w_{ij} [k] \; \| \x_j [k] - \c^* \|   
    \leq \max_{v_j \in \vr} \| \x_j [k] - \c^* \|,
\end{equation*}
which corresponds to the $(\c^*, 1, \{ 0[k] \})$-states contraction property in Definition~\ref{def: contraction}.

Next, we show that SDMMFD, CWTM and RVO satisfy the mixing dynamics property with some particular quantities. To this end, we need to show that there exists $\{ \W^{(\ell)} [k] \}_{k \in \N, \ell \in [d]} \subset \S^{| \vr |}$ such that the state dynamics of each algorithm can be written as \eqref{def: perturbed mixing}, the sequences of graphs $\{ \mathbb{G}(\W^{(\ell)} [k]) \}_{k \in \N}$ are repeatedly jointly rooted for all $\ell \in [d]$, and there exists $G \in \R_{\geq 0}$ such that
\begin{equation*}
    \limsup_k \| \g_i [k] \|_\infty \leq G 
    \quad \text{for all} \quad v_i \in \vr.
\end{equation*}

For SDMMFD, since in the \texttt{dist\_filter} step, each regular agent removes at most $F$ states and in the \texttt{full\_mm\_filter} step, each regular agent removes at most $2dF$ states, from Proposition~5.1 in \cite{sundaram2018distributed}, for all $k \in \N$, $\ell \in [d]$ and $v_i \in \vr$, the dynamics can be rewritten as
\begin{equation*}
    x^{(\ell)}_i [k+1] 
    = \sum_{v_j \in (\mathcal{N}^{\text{in}}_i \cap \vr) \cup \{v_i\}} 
    \Tilde{w}^{(\ell)}_{ij} [k] \; x^{(\ell)}_j [k] - \alpha_k \; g^{(\ell)}_i [k],
\end{equation*}
where $\Tilde{w}^{(\ell)}_{ii} [k] + \sum_{v_j \in \mathcal{N}^{\text{in}}_i \cap \vr} \Tilde{w}^{(\ell)}_{x,ij} [k] = 1$, and $\Tilde{w}^{(\ell)}_{ii} [k] > \omega$ and at least $| \mathcal{N}^{\text{in}}_i | - (2d + 1)F$ of the other weights are lower bounded by $\frac{\omega}{2}$ (where $\omega$ is defined in Assumption~\ref{asm: weight matrices}). For $k \in \N$, $\ell \in [d]$ and $v_i, v_j \in \vr$, let
\begin{equation*}
    ( \widetilde{\W}^{(\ell)} [k] )_{ij} = \begin{cases}
        \Tilde{w}^{(\ell)}_{ij} [k]  \quad &\text{if} \;\; v_j \in (\mathcal{N}^{\text{in}}_i \cap \vr) \cup \{v_i\},  \\
        0 &\text{otherwise},
    \end{cases}
\end{equation*}
and we can write the dynamics of all regular agents as
\begin{equation}
    \x^{(\ell)} [k+1] = \widetilde{\W}^{(\ell)} [k] \x^{(\ell)} [k] - \alpha_k \g^{(\ell)} [k].
    \label{eqn: regular dynamics2}
\end{equation}
Following the proof of Theorem~6.1 in \cite{sundaram2018distributed}, we have that each regular agent $v_i \in \vr$ removes at most $(2d + 1)F$ incoming edges (including all incoming edges from Byzantine agents). Since the graph $\mathcal{G}$ is $((2 d + 1)F + 1)$-robust, applying Lemma~\ref{lem: rooted subgraph2}, we can conclude that the subgraph $\mathbb{G} (\widetilde{\W}^{(\ell)} [k])$ is rooted for all $k \in \N$ and $\ell \in [d]$ (which implies that the sequence $\big\{ \mathbb{G} (\widetilde{\W}^{(\ell)} [k]) \big\}_{k \in \N}$ is repeatedly jointly rooted for all $\ell \in [d]$).
Since SDMMFD satisfies the states contraction property with $\gamma = 1$, substituting $\gamma = 1$ into the inequality \eqref{eqn: limsup grad bound} in Lemma~\ref{lem: grad bound}, we obtain the mixing dynamics result. 

For CWTM, by noting that the graph $\mathcal{G}$ is $(2 F + 1)$-robust and in the \texttt{cw\_mm\_filter} step, each regular agent removes at most $2F$ states, we can use the same steps as in SDMMFD case to show that there exists $\{ \widetilde{\W}^{(\ell)} [k] \}_{k \in \N, \ell \in [d]}$ such that the dynamics of all regular agents can be written as \eqref{eqn: regular dynamics2} and $\mathbb{G} (\widetilde{\W}^{(\ell)} [k])$ is rooted for all $k \in \N$ and $\ell \in [d]$. Since CWTM satisfies the states contraction property with $\gamma = d$, substituting $\gamma = d$ into the inequality \eqref{eqn: limsup grad bound} in Lemma~\ref{lem: grad bound}, we obtain the mixing dynamics result.

For RVO, from the \texttt{safe\_point} step, for all $k \in \N$ and $v_i \in \vr$, we can write
\begin{equation*}
    \x_i [k+1] = \sum_{v_j \in (\mathcal{N}^{\text{in}}_i \cap \vr) \cup \{v_i\}}
    w_{ij} [k] \x_j[k] - \alpha_k \g_i [k],
\end{equation*}
where $w_{ij} [k]$ is defined as in \eqref{eqn: convex comb}. For $k \in \N$ and $v_i, v_j \in \vr$, let
\begin{equation*}
    ( \W [k] )_{ij} = \begin{cases}
        w_{ij} [k]  \quad &\text{if} \;\; v_j \in (\mathcal{N}^{\text{in}}_i \cap \vr) \cup \{v_i\},  \\
        0 &\text{otherwise},
    \end{cases}
\end{equation*}
and we can write the dynamics of all regular agents as \eqref{def: perturbed mixing} using the above $\W [k]$ for all $\ell \in [d]$. Since $p(d, F) > F$ from the definition of $p$ in Subsection~\ref{subsec: algorithms}, the subgraph $\mathbb{G} (\W [k])$ is rooted for all $k \in \N$ (by Lemma~\ref{lem: rooted subgraph2}). Since RVO satisfies the states contraction property with $\gamma = 1$, substituting $\gamma = 1$ into the inequality \eqref{eqn: limsup grad bound} in Lemma~\ref{lem: grad bound}, we obtain the mixing dynamics result.
\end{proof}

\subsection{Proof of Lemma~\ref{lem: radius-c}}
\label{subsec: r_c bound}

\begin{proof}[Proof of Lemma~\ref{lem: radius-c}]
First, consider the case where the regular agents follow SDMMFD or SDFD \cite{kuwaran2020byzantine_ACC, kuwaran2024scalable}.
From Lemma~\ref{lem: aux convergence}, we have that for all $\ell \in [d]$,
\begin{equation}
    y^{(\ell)}[\infty] 
    \in \Big[ \min_{v_i \in \vr} y_i^{(\ell)}[0],
    \max_{v_i \in \vr} y_i^{(\ell)}[0] \Big].
    \label{eqn: y infty interval}
\end{equation}
Since in the initialization step, we set $\y_i [0] = \hat{\x}_i^*$ for all $v_i \in \vr$, we can rewrite \eqref{eqn: y infty interval} as
\begin{equation*}
    y^{(\ell)}[\infty] 
    \in \Big[ \min_{v_i \in \vr} \hat{x}_i^{*(\ell)},
    \max_{v_i \in \vr} \hat{x}_i^{*(\ell)} \Big].
\end{equation*}
Using the above expression, we can write
\begin{equation}
    y^{(\ell)}[\infty] - c^{*(\ell)}
    \leq \max_{v_i \in \vr} \hat{x}_i^{*(\ell)} - c^{*(\ell)}.
    \label{eqn: x-y relationship}
\end{equation}
Let $v_{i'} \in \vr$ be an agent such that $\hat{x}_{i'}^{*(\ell)} = \max_{v_i \in \vr} \hat{x}_i^{*(\ell)}$. We can rewrite inequality \eqref{eqn: x-y relationship} as
\begin{equation*}
    y^{(\ell)}[\infty] - c^{*(\ell)}
    \leq \big( \hat{x}_{i'}^{*(\ell)} - x_{i'}^{*(\ell)} \big) 
    + \big( x_{i'}^{*(\ell)} - c^{*(\ell)} \big).
\end{equation*}
Since $\| \hat{\x}_{i'}^* - \x_{i'}^* \|_\infty \leq \epsilon^*$ and $\| \x_{i'}^* - \c^* \| \leq r^*$ from the definition of $\c^*$ and $r^*$, the above inequality becomes
\begin{equation*}
    | y^{(\ell)}[\infty] - c^{*(\ell)} |
    \leq \big| \hat{x}_{i'}^{*(\ell)} - x_{i'}^{*(\ell)} \big| 
    + \big| x_{i'}^{*(\ell)} - c^{*(\ell)} \big|
    \leq \epsilon^* + r^*.
\end{equation*}
Applying the above inequality, we have that 
\begin{equation}
    \| \y[\infty] - \c^* \|^2
    = \sum_{\ell \in [d]} | y^{(\ell)}[\infty] - c^{*(\ell)} |^2
    \leq d ( r^* + \epsilon^* )^2.
    \label{eqn: sq-distance y-c}
\end{equation}
Consider a regular agent $v_i \in \vr$. Using inequality \eqref{eqn: sq-distance y-c} and the definition of $\c^*$ and $r^*$ (defined in Subsection~\ref{subsec: asm}), we obtain that
\begin{equation*}
    \| \x_i^* - \y [\infty] \| 
    \leq \| \x_i^* - \c^* \| + \| \c^* - \y [\infty] \|
    \leq \sqrt{d} (r^* + \epsilon^*) + r^*.
\end{equation*}
The result follows from noting that $\xc = \y[\infty]$ for SDMMFD and SDFD.

Now, consider the case where the regular agents follow CWTM \cite{sundaram2018distributed, su2015byzantine, su2020byzantine, fu2021resilient, zhao2019resilient, fang2022bridge} or RVO \cite{park2017fault, abbas2022resilient}. Since in this case, $\xc = \c^*$, the result directly follows from the definition of $\c^*$ and $r^*$.
\end{proof}

\end{document}